\theoremstyle{plain}
\newtheorem{thm}{Theorem}
\newtheorem{cor}{Corollary}
\newtheorem{lem}[cor]{Lemma}
\newtheorem{prop}[cor]{Proposition}
\theoremstyle{definition}
\newtheorem{defn}[cor]{Definition}
\let\div\undefined{}
\let\P\undefined{}
\DeclareMathOperator{\div}{div}
\DeclareMathOperator{\Lip}{Lip}
\DeclareMathOperator{\sgn}{sgn}
\DeclareMathOperator{\dist}{dist}
\DeclareMathOperator{\cl}{cl}
\DeclareMathOperator{\disc}{disc}
\newcommand{\R}{\mathbb{R}}
\newcommand{\Z}{\mathbb{Z}}
\newcommand{\E}{\mathbb{E}}
\newcommand{\P}{\mathbb{P}}
\renewcommand{\tilde}{\widetilde}
\numberwithin{equation}{section}
\title{On the random G equation with nonzero divergence}
\author{William Cooperman}
\begin{document}
\begin{abstract}
    We prove a quantitative rate of homogenization for the G equation in a random setting with finite range of dependence and nonzero divergence, with explicit dependence of the constants on the Lipschitz norm of the environment. Inspired by work of Burago\textendash{}Ivanov\textendash{}Novikov, the proof uses explicit bounds on the waiting time for the associated metric problem.
\end{abstract}
\maketitle

\section{Introduction}
We consider the behavior, as $\varepsilon \to 0^+$, of the family ${\{u_\varepsilon\}}_{\varepsilon > 0}$ of solutions to the G equation,
\begin{equation}\label{eq:eps-G}
    \begin{cases}
        D_t u^\varepsilon(t, x) - |D_x u^\varepsilon(t, x)| + V(\varepsilon^{-1}x) \cdot D_x u^\varepsilon(t, x) = 0 &\quad \text{in $\R_{> 0} \times \R^d$}\\
        u^\varepsilon(0, x) = u_0(x) &\quad \text{in $\R^d$},
    \end{cases}
\end{equation}
where $d \geq 2$, $V \colon \R^d \to \R^d$ is a random vector field and the initial data $u_0 \colon \R^d \to \R^d$ is Lipschitz. The level sets of $u^\varepsilon$ model a flame front which expands at unit speed in the normal direction while being advected by $V$, which models the wind velocity. When compared with homogenization of other Hamilton-Jacobi equations, the main difficulty with the G equation is that, since we do not assume that $\|V\|_{L^\infty} < 1$, the equation may not be coercive. On the other hand, if $\E[V] = 0$, then the equation is still ``coercive on average'', so we can hope to recover some large-scale controllability.

When $\div V = 0$, the wind cannot form ``traps'' where the flame can be contained, and so a controllability bound holds~\cite{CardSoug}. The main novelty of this paper is a more quantitative controllability bound, which allows for the possibility that $\div V$ is nonzero but small, and rules out the existence of such traps.

Cardaliaguet\textendash{}Souganidis~\cite{CardSoug} proved, under the assumption that the environment $V \in C^{1,1}(\R^d; \R^d)$ is stationary ergodic and divergence-free, that the equation homogenizes; i.e.\ we have the locally uniform convergence of solutions $u^\varepsilon \to \overline{u}$ as $\varepsilon \to 0$ almost surely, where $\overline{u}$ is the solution to the effective equation
\begin{equation}\label{eq:macro-G}
    \begin{cases}
        D_t \overline{u}(t, x) = \overline{H}(D_x\overline{u}(t, x)) &\quad \text{ in $\R_{>0} \times \R^d$}\\
        \overline{u}(0, x) = u_0(x) &\quad \text{ in $\R^d$},
    \end{cases}
\end{equation}
and $\overline{H} \colon \R^d \to \R^d$, called the effective Hamiltonian, is positively homogeneous of degree one and coercive.

Our main result is the following.
\begin{thm}\label{thm:main-homog}
    Let $V \colon \R^d \to \R^d$ be a random Lipschitz vector field which has unit range of dependence and is $\Z^d$-translation invariant. There is a constant $C = C(d) > 0$ such that, if \[ |\div V| \leq C^{-1}{(\|V\|_{C^{0,1}}+1)}^{-C} \] almost surely, then there is a random variable $T_0$, with \[ \E[\exp(C^{-1}{(\|V\|_{C^{0,1}}+1)}^{-C}\log^{3/2} T_0)] \leq C, \] such that
    \begin{equation}\label{eq:main-homog-bound}
        |u^{\varepsilon}(t, x) - \overline{u}(t, x)| \leq C{(\|V\|_{C^{0,1}}+1)}^{C}{(T\varepsilon)}^{1/2}\log^2(\varepsilon^{-1}T)
    \end{equation}
    for all $T \geq \varepsilon T_0$ and $t, |x| \leq T$.
\end{thm}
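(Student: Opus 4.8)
The plan is to reduce the homogenization of \eqref{eq:eps-G} to a quantitative large-scale estimate for the \emph{metric problem} governing the flame propagation, and then to transfer that estimate back to the level-set equation by comparison.

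\textbf{Step 1 (the metric problem).} I would first introduce the first-passage time
\[
  m(x,y) = \inf\Bigl\{ T > 0 : \exists\, \xi \in \Lip([0,T];\R^d),\ \xi(0)=x,\ \xi(T)=y,\ |\dot\xi(s)-V(\xi(s))|\le 1 \ \text{a.e.}\Bigr\},
\]
which is the object controlling the level sets of $u^\varepsilon$: by the control representation of \eqref{eq:eps-G} and the comparison principle, for each $\lambda$ the set $\{x : u^\varepsilon(t,x)\ge\lambda\}$ equals $\varepsilon$ times the set of points reachable in time $\varepsilon^{-1}t$ from $\varepsilon^{-1}\{u_0\ge\lambda\}$. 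I would then record the elementary properties of $m$: it is subadditive, satisfies $m(x,y)\ge(\|V\|_{L^\infty}+1)^{-1}|x-y|$, is $\Z^d$-stationary by hypothesis, and — crucially — has an approximate finite range of dependence, in that $m(x,y)$ is determined up to small error by the restriction of $V$ to a bounded neighborhood of any near-geodesic; this last point requires an a priori bound on geodesic length, which is exactly the content of Step 2.

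\textbf{Step 2 (waiting times and the absence of traps — the core).} The key input is a quantitative controllability bound: there is a random waiting time $\tau(x)=\tau(x,V)$ and a deterministic $L = C(d)(\|V\|_{C^{0,1}}+1)^{C}$ with $m(x,y)\le L\,|x-y| + \tau(x)$ for all $x,y\in\R^d$, where $\tau$ satisfies the stretched-exponential integrability $\E[\exp(C^{-1}(\|V\|_{C^{0,1}}+1)^{-C}\log^{3/2}\tau)]<\infty$. Following Burago--Ivanov--Novikov, I would prove this by a multiscale construction. Locally, the hypothesis $|\div V|\ll 1$ rules out the flame being confined to a region $\Omega$: confinement would force $V\cdot n\le -1$ on $\partial\Omega$, hence $\int_\Omega \div V = \int_{\partial\Omega}V\cdot n \le -\mathrm{Per}(\Omega)$, which is incompatible with $|\div V|$ small on bounded $\Omega$; so a unit cube is ``good'' — reachable from any interior point in bounded time — with probability close to $1$ as $|\div V|\to 0$, and the quantitative threshold $C^{-1}(\|V\|_{C^{0,1}}+1)^{-C}$ is what makes this probability beat the percolation threshold with room to spare. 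Finite range of dependence lets the good cubes stochastically dominate a supercritical Bernoulli field, so at each dyadic scale one can route a trajectory around the bad clusters, paying a bounded waiting time per scale; summing over the $O(\log|x-y|)$ scales and invoking a large-deviation bound for the renormalized percolation yields both the displayed estimate and the $\log^{3/2}$ tail. This step is the main obstacle: one must make the Cardaliaguet--Souganidis no-trap mechanism fully quantitative, stable under a nonzero (but small) divergence, and compatible with explicit polynomial dependence on $\|V\|_{C^{0,1}}$.

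\textbf{Step 3 (limit shape, fluctuations, bias).} With Step 2 in hand, subadditivity and stationarity give, via the subadditive ergodic theorem, a limit $\overline m(e)=\lim_{R\to\infty}R^{-1}m(0,Re)$; $\overline m$ is norm-like, and $\overline H$ is its positively $1$-homogeneous, coercive dual, coercivity coming from the average controllability of Step 2. For the rate I would prove (i) a concentration inequality $\P(|m(0,Re)-\E m(0,Re)|>s\sqrt R)\le C\exp(-s^2/C)$ by a bounded-difference martingale argument along a geodesic, resampling $V$ cube by cube with increments controlled by Step 2; and (ii) a bias bound $|\E m(0,Re)-R\,\overline m(e)|\le C(\|V\|_{C^{0,1}}+1)^{C}\sqrt R\log R$ by the standard approximate-subadditivity lemma, cutting $[0,Re]$ into $\sqrt R$ segments and paying one waiting time per junction. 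Combining these and taking a net over directions gives $m(0,Re)=R\,\overline m(e)+O\bigl((\|V\|_{C^{0,1}}+1)^{C}\sqrt R\log^2 R\bigr)$ outside an event of probability $\le C\exp(-c(\|V\|_{C^{0,1}}+1)^{-C}\log^{3/2}R)$, uniformly over $|e|=1$; the polylogarithmic losses are the price of the multiscale union bound and match the exponents in \eqref{eq:main-homog-bound}. Finally I would feed this back through the level-set representation of Step 1: the rescaled reachable set $\varepsilon\{y:m(0,y)\le\varepsilon^{-1}t\}$ lies within Hausdorff distance $C(\|V\|_{C^{0,1}}+1)^{C}(t\varepsilon)^{1/2}\log^2(\varepsilon^{-1}t)$ of the convex set $\{z:\overline m(z)\le t\}$, which is exactly the reachable set for the effective equation \eqref{eq:macro-G}; since $u_0$ is Lipschitz, a Hausdorff discrepancy of this size between the sublevel sets at every height forces an $L^\infty$ discrepancy of the same size between $u^\varepsilon$ and $\overline u$ on $\{t,|x|\le T\}$, which is \eqref{eq:main-homog-bound}, and taking $T_0$ to be a suitable multiple of the waiting time $\tau(0)$ produces the restriction $T\ge\varepsilon T_0$ together with the stated integrability of $T_0$.
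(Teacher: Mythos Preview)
Your overall architecture matches the paper's: local waiting-time estimate \`a la Burago--Ivanov--Novikov, percolation to globalize, then fluctuation plus bias bounds feeding into the shape theorem and the control representation. But two of your key mechanisms differ from the paper's in ways that matter.

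\textbf{The no-trap argument is the real gap.} Your confinement heuristic --- ``$V\cdot n\le -1$ on $\partial\Omega$, hence $\int_\Omega\div V\le-\mathrm{Per}(\Omega)$, incompatible with $|\div V|$ small'' --- together with isoperimetry only yields the differential inequality $\partial_t|\mathcal{R}^-_t|\ge c|\mathcal{R}^-_t|^{(d-1)/d}-\varepsilon|\mathcal{R}^-_t|$, which saturates once $|\mathcal{R}^-_t|\sim\varepsilon^{-d}$. So you get growth $|\mathcal{R}^-_t|\gtrsim t^d$ only for $t\lesssim\varepsilon^{-1}$, and hence you can escape bad clusters only of size $\lesssim\varepsilon^{-d}$. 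Since the global controllability bound must hold for arbitrarily large deviations $n$ (this is what drives the martingale error in the fluctuation step to zero as $|y|\to\infty$), this is not enough. The paper's Proposition~\ref{lem:reachablesetgrows} gets $|\mathcal{R}^-_t|\ge t^d/(2d)$ for \emph{all} $t$ by a genuinely new trick: discretize $\mathcal{R}^-_t$ to a lattice set $E$, observe that $\int_E\div V=\int_{\partial E}V\cdot\nu$ is measurable with respect to $V|_{\partial E}$, which by unit range of dependence is \emph{independent} of $\int_{E\setminus D}\div V$ (with $D$ a unit collar of $\partial E$); conditioning and using $\E[\div V]=0$ then forces $\int_E\div V\le C\varepsilon|\partial E|$ almost surely --- perimeter, not volume, on the right. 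This is the step that makes the small-divergence case work for all scales, and it is absent from your sketch.

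\textbf{The bias bound is different in kind.} ``Cutting $[0,Re]$ into $\sqrt R$ segments'' is a subadditivity move and gives only $\E m(0,Re)\ge R\overline m(e)-C$; the hard direction is approximate \emph{super}additivity. The paper does not use an Alexander-type linearization here but instead applies the Hobby--Rice theorem (Borsuk--Ulam on $S^d$) to a geodesic $\gamma$ from $0$ to $y$, producing at most $d+1$ subarcs whose displacements sum to $\tfrac12 y$; combined with the fluctuation bound this yields $2f(\tfrac12 y)\le f(y)+O(|y|^{1/2}\log^2|y|)$ directly. Your segment-cutting description does not supply this direction.

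Smaller divergences from the paper: your concentration step (resample $V$ cube by cube along a geodesic) is the Kesten route, whereas the paper follows Armstrong--Cardaliaguet--Souganidis and runs Azuma on the filtration of a $\rho$-guaranteed reachable set; and your percolation step is pitched as a dyadic multiscale construction, while the paper uses a single-scale argument (Lemmas~\ref{closed-clusters-small}--\ref{open-cluster-big} plus the skeleton construction in Theorem~\ref{cont-estimate}). These alternatives could in principle be made to work, but neither bypasses the need for the all-scales growth estimate above.
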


\subsection{How quantitative is Theorem~\ref{thm:main-homog}?}
There are two main quantitative features of Theorem~\ref{thm:main-homog}: the bound on $|u^\varepsilon - \overline{u}|$, and the random variable $T_0$, which represents how long we must wait before the bound takes effect. As for the former, the exponent $\frac12$ of $(t\varepsilon)$ matches with the best known bound for convergence of the limiting shape in first-passage percolation~\cite{50years}. Indeed, first-passage percolation is an easier problem, since controllability is free and the Hamiltonian is i.i.d., so we cannot hope for a better bound without improving the result for first-passage percolation as well.

As for the bound on $T_0$, we note that the distribution of $T_0$ has subpolynomial tails and therefore all moments of $T_0$ are finite. However, our only bound on the typical value of $T_0$ is \[ \E[T_0] \leq \exp\left(C{(\|V\|_{C^{0,1}}+1)}^C\right). \] We note that while $\|V\|_{C^{0,1}}$ appears to be a random variable, the finite range of dependence assumption implies that it is constant almost surely. The exponential dependence on $\|V\|_{C^{0,1}}$ is an artifact of the fact that the exponent $\frac12$, discussed above, is the tightest possible with our current argument. Indeed, by the same proof it would follow that, if we replace the exponent $\frac12$ in~\ref{eq:main-homog-bound} with an exponent of $\frac12 - \delta$, the corresponding $T_0$ would instead depend polynomially on $\|V\|_{C^{0,1}}$, with the bound \[ \E[T_0] \leq C{(\|V\|_{C^{0,1}}+1)}^{C/\delta}. \]

\subsection{Prior work}
There is a rich body of literature studying homogenization of the G equation and enhancement of the front speed (see~\cite{CardSoug, Burago2, CardNoleSoug, NoleNovi, Caffarelli} for example), so we limit our focus to work most closely related to the current situation. Inspired by Cardaliaguet\textendash{}Souganidis~\cite{CardSoug}, the author~\cite{me} showed that, if $V \in C^{1,1}(\R^d; \R^d)$ has unit range of dependence and is divergence-free, then there is a constant $C = C(d, \|V\|_{C^{1,1}}) > 0$ and a random variable $T_0$ with subpolynomial tail bound $\E[\exp(C^{-1}\log^{3/2} T_0)] \leq C$, such that
\begin{equation}\label{eq:myprevioushomogenization}
    |u^{\varepsilon}(t, x) - \overline{u}(t, x)| \leq C\|u_0\|_{C^{0,1}}{(t\varepsilon)}^{1/2}\log^3(\varepsilon^{-1}t)
\end{equation}for all $t \geq \varepsilon T_0$ and $|x| \leq t$.

Because the bound~\eqref{eq:myprevioushomogenization} is bootstrapped from local controllability estimates by Cardaliaguet\textendash{}Souganidis~\cite{CardSoug}, the dependence of the constant $C$ on $\|V\|_{C^{1,1}}$ was unspecified. Besides, the work of Cardaliaguet\textendash{}Souganidis~\cite{CardSoug} used the divergence-free condition in a critical way, which was necessary under their weaker assumption of stationary ergodicity.

On the other hand, when the environment is periodic instead of random, Cardaliaguet\textendash{}Nolen\textendash{}Souganidis~\cite{CardNoleSoug} proved quantitative homogenization of the G equation without the divergence-free condition. Indeed, they made only the weaker assumption that $|\div V| \leq \varepsilon$ for some $\varepsilon = \varepsilon(d) > 0$, which is related to the constant in the isoperimetric inequality for periodic sets.

We also note that Feldman~\cite{Feldman} extended work of Burago\textendash{}Ivanov\textendash{}Novikov~\cite{Burago2} to prove quantitative estimates on the waiting time in an environment which satisfies a mixing condition in both space and time variables, under the assumption $\div V = 0$.

In this paper, we extend the author's work~\cite{me} to the case where $\div V$ may be nonzero but small and $V$ is only Lipschitz. Along the way, we quantify the dependence of the constant in~\eqref{eq:myprevioushomogenization} on the Lipschitz norm of $V$. The proof adapts an argument of Burago\textendash{}Ivanov\textendash{}Novikov~\cite{Burago}, as well as a new argument to show that, even in the presence of nonzero divergence, the reachable set continues to grow quickly.

\subsection{Definitions, assumptions, and conventions}
We use $C > 0$ to denote a (large) constant which may vary from line to line, but (unless otherwise specified) depends only on the dimension, $d$. For the sake of brevity, we write $\Lip(V)$ to denote the maximum of $1$ and the smallest Lipschitz constant for $V$.

For convenience, we will assume that $V \in C^{1,1}(\R^d; \R^d)$ qualitatively; since every bound we prove depends only on the Lipschitz norm of $V$, this condition can be dropped by approximating $V$ by its mollification. We also assume that there there is zero average drift, i.e. $\E[V] = 0$.

If $E \subseteq \R^d$, we write $\mathcal{G}(E)$ to denote the $\sigma$-algebra generated by $V$ restricted to $E$. That is, $\mathcal{G}(E)$ is the smallest $\sigma$-algebra such that the random variables $V(x)$ are $\mathcal{G}(E)$-measurable for each $x \in E$. We assume that $V$ has unit range of dependence, which means that if $A, B \subseteq \R^d$ are sets with $\dist(A, B) \geq 1$, then $\mathcal{G}(A)$ and $\mathcal{G}(B)$ are independent.

Given $t > 0$ and a measurable function $\alpha \colon [0, t] \to B_1$, define the \textit{controlled path} $X_x^\alpha \colon [0, t] \to \R^d$ to be the solution to the initial-value problem
\begin{equation}\label{controlled-path-ode}
    \begin{cases} \dot{X}_x^\alpha = \alpha + V(X_x^\alpha)\\ X_x^\alpha(0) = x. \end{cases}
\end{equation}
For each $x \in \R^d$, define the \textit{reachable set at time $t$} by
\begin{equation}\label{reachable-set-defn}
    \mathcal{R}_t(x) := \{y \in \R^d \mid \exists\; \alpha \colon [0, t] \to B_1 \text{ such that } X_x^\alpha(t) = y\}.
\end{equation}
Note that this definition still makes sense for $t < 0$, if we interpret $[0, t]$ as $[t, 0]$. For convenience, we also define the sets \[ \mathcal{R}_t^-(x) := \bigcup_{0 \leq s \leq t} \mathcal{R}_s(x) \] for $t \geq 0$ and \[ \mathcal{R}_t^+(x) := \bigcup_{t \leq s \leq 0} \mathcal{R}_s(x) \] for $t \leq 0$. Define the \textit{first passage time} \[ \theta(x, y) := \inf \{t \mid y \in \mathcal{R}_t(x)\}. \]
Finally, if $E \subseteq \R^d$ is a set, we define \[ \mathcal{R}_t(E) = \bigcup_{e \in E} \mathcal{R}_t(e), \] and we do the same for $\mathcal{R}_t^-$ and $\mathcal{R}_t^+$.

\subsection{Acknowledgment}
I would like to thank my advisor, Charles Smart, for suggesting the problem and for many helpful conversations.

\numberwithin{thm}{section}
\numberwithin{cor}{section}
\section{Local waiting time estimates}
In this section, we adapt the proof of Burago\textendash{}Ivanov\textendash{}Novikov~\cite{Burago} to estimate the waiting time for the metric problem associated to the G equation.
\subsection{The incompressible case}
First, we prove that, with high probability, sufficiently large $(d-1)$-dimensional cubes have very little flux.

Let $E(R_1, R_0, \varepsilon)$ be the event that every axis-aligned $(d-1)$-dimensional cube $B$, of radius between $R_0$ and $R_1$, which intersects $Q_{R_1}$ satisfies
\begin{equation}\label{eq:smallaverageepsilon}
    \left|\int_B V(x) \cdot \nu(x) \; \mathrm{d}x\right| \leq \varepsilon |B|,
\end{equation}
where $\nu \colon B \to \R^d$ denotes a unit normal to $B$.

\begin{lem}\label{lem:fluxhelper}
    The event $E(R_1, R_0, \varepsilon)$ has probability at least
    \[
        \P[E(R_1, R_0, \varepsilon)] \geq 1 - C {\left(\frac{R_1\Lip(V)}{\varepsilon}\right)}^d \left(\frac{R_1(1+\|V\|_{L^\infty})}{\varepsilon}\right)\exp\left(\frac{-\varepsilon^2R_0^{d-1}}{C\|V\|_{L^\infty}}\right).
    \]
\end{lem}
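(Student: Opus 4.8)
The plan is to establish the bound for a single fixed cube $B$ via a concentration inequality, and then take a union bound over a sufficiently fine net of cubes. Fix an axis-aligned $(d-1)$-dimensional cube $B$ of radius $r \in [R_0, R_1]$ with some fixed orientation $\nu$. The quantity $\int_B V(x) \cdot \nu(x)\, \mathrm{d}x$ has mean zero, since $\E[V] = 0$. To control its fluctuations, I would partition $B$ into subcubes of side length $\approx 1$; by the unit range of dependence, the contributions of subcubes whose mutual distance exceeds $1$ are independent, so after a bounded-overlap coloring of the subcubes into $O(1)$ independence classes, $\int_B V \cdot \nu$ is a sum of $O(1)$ independent sums of mean-zero terms, each term bounded by $C\|V\|_{L^\infty}$. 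Hoeffding's inequality (or a bounded-difference / Azuma argument) then gives
\[
    \P\left[\left|\int_B V(x)\cdot\nu(x)\,\mathrm{d}x\right| > \varepsilon |B|\right] \leq C\exp\left(\frac{-\varepsilon^2 |B|^2}{C\|V\|_{L^\infty}^2 \cdot (\text{number of unit subcubes})}\right) = C\exp\left(\frac{-\varepsilon^2 r^{d-1}}{C\|V\|_{L^\infty}}\right),
\]
using $|B| \asymp r^{d-1}$ and that the number of unit subcubes is also $\asymp r^{d-1}$. Since $r \geq R_0$, this is at most $C\exp(-\varepsilon^2 R_0^{d-1}/(C\|V\|_{L^\infty}))$.

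Next I would pass from a fixed cube to all cubes appearing in the event $E(R_1, R_0, \varepsilon)$ by a net argument. A cube $B$ intersecting $Q_{R_1}$ with radius at most $R_1$ is determined by its center (which lies in $Q_{CR_1}$), its radius in $[R_0, R_1]$, and its axis orientation (one of $d$ choices). The flux functional $B \mapsto \int_B V\cdot\nu$ is Lipschitz in the center and radius parameters with constant controlled by $R_1^{d-2}\|V\|_{L^\infty}$ from moving the domain of integration, plus $R_1^{d-1}\Lip(V)$ from varying $V$ itself; so choosing a net of centers and radii at scale $\delta := c\varepsilon/(R_1^{d-1}\Lip(V) + R_1^{d-2}\|V\|_{L^\infty} + 1) \gtrsim \varepsilon/(R_1^{d-1}\Lip(V))$ ensures that controlling the flux to within $\tfrac{\varepsilon}{2}|B|$ on the net controls it to within $\varepsilon|B|$ everywhere. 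The number of net points is at most $C(R_1/\delta)^{d} \cdot (R_1/\delta) \cdot d \leq C\bigl(R_1\Lip(V)/\varepsilon\bigr)^d \bigl(R_1(1+\|V\|_{L^\infty})/\varepsilon\bigr)$, which accounts exactly for the prefactor in the statement. Applying the single-cube bound at each net point and summing yields the claimed estimate.

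The main technical point, and the one requiring the most care, is the net/discretization step — specifically getting the Lipschitz estimate for the flux functional with respect to perturbing the cube, so that the prefactor comes out with the stated powers of $R_1$, $\Lip(V)$, and $1 + \|V\|_{L^\infty}$. One must be careful that when a cube of radius $r$ moves by $\delta$, the symmetric difference of the old and new integration surfaces has $(d-1)$-measure $\lesssim r^{d-2}\delta$, contributing $\lesssim r^{d-2}\delta \|V\|_{L^\infty}$, while replacing $V(x)$ by $V(x')$ with $|x-x'|\lesssim\delta$ on a surface of measure $\lesssim r^{d-1}$ contributes $\lesssim r^{d-1}\delta\Lip(V)$; the $\Lip(V)$ term dominates the counting and gives the $(R_1\Lip(V)/\varepsilon)^d$ factor. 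The concentration step itself is routine once the independence structure from the finite range of dependence is set up, and the mean-zero hypothesis $\E[V]=0$ is what makes Hoeffding applicable without a drift correction.
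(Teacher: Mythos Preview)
Your two-step strategy---single-cube concentration via partitioning into unit subcubes and a coloring argument, followed by a union bound over a net of cubes---is exactly the paper's approach, and your Step~1 matches the paper's Step~1 essentially verbatim.

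There is, however, an arithmetic slip in your net argument that makes the counting not come out as stated. You correctly identify that the flux functional has Lipschitz constant $L \lesssim r^{d-1}\Lip(V) + r^{d-2}\|V\|_{L^\infty}$ in the cube parameters, but then you take $\delta \asymp \varepsilon/L$. This would make the flux vary by $\asymp \varepsilon$ over a $\delta$-step, whereas the tolerance you need is $\tfrac{\varepsilon}{2}|B| \asymp \varepsilon r^{d-1}$, not $\varepsilon$. With your $\delta \asymp \varepsilon/(R_1^{d-1}\Lip(V))$ one gets $R_1/\delta \asymp R_1^{d}\Lip(V)/\varepsilon$, so $(R_1/\delta)^{d+1} \asymp (R_1^{d}\Lip(V)/\varepsilon)^{d+1}$, which is larger than the claimed prefactor by a factor of $R_1^{d^2-1}$; the inequality you wrote for the net count is therefore false as stated. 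The fix is simply to divide $\varepsilon|B|$ (not $\varepsilon$) by $L$: the powers of $r$ then cancel, and one finds that the correct mesh is $\delta_{\mathrm{pos}} \asymp \varepsilon/\Lip(V)$ for the position (via the change-of-variables estimate $|\int_B V\cdot\nu - \int_{B+h} V\cdot\nu| \leq |B|\,|h|\,\Lip(V)$) and $\delta_{\mathrm{rad}} \asymp \varepsilon/(1+\|V\|_{L^\infty})$ for the radius (via the symmetric-difference estimate). This is precisely what the paper does in its Step~2, taking vertices in $\bigl(\tfrac{\varepsilon}{C\Lip(V)}\Z\bigr)^d$ and radii in $\tfrac{\varepsilon}{C(1+\|V\|_{L^\infty})}\Z$, and it yields exactly the prefactor $\bigl(R_1\Lip(V)/\varepsilon\bigr)^d \cdot R_1(1+\|V\|_{L^\infty})/\varepsilon$.
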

\begin{proof}
    Step 1. Let $B$ be an axis-aligned $(d-1)$-dimensional cube of radius $r \geq R_0$. Partition $B$ into at least $C^{-1}r^{d-1}$ equally sized $(d-1)$-dimensional cubes, called $B_1, \dots, B_n$, of radius between $1$ and $2$. For each $i$, the random variable $\int_{B_i} V(x) \cdot \nu(x) \; \mathrm{d}x$ has expectation zero and absolute value at most $C\|V\|_{L^\infty}$. The random variables for non-neighboring cubes are independent, so we can group the sum \[ \int_{B} V(x) \cdot \nu(x) \; \mathrm{d}x = \sum_{i=1}^n \int_{B_i} V(x) \cdot \nu(x) \; \mathrm{d}x \] into $2^{d-1}$ separate sums, each of which contains mutually independent random variable summands, which correspond to non-neighboring cubes. By Azuma's inequality, we conclude that~\eqref{eq:smallaverageepsilon} holds with probability at least \[ 1 - \exp\left(\frac{-\varepsilon^2R_0^{d-1}}{C\|V\|_{L^\infty}}\right). \]

    Step 2. Use the union bound to apply Step 1 to every cube $B$ which has a vertex in ${\left(\frac{\varepsilon}{C\Lip(V)}\Z\right)}^d \cap Q_{R_1 + 1}$ and radius in $\left(\frac{\varepsilon}{C(1+\|V\|_{L^\infty})}\Z\right) \cap [R_0-1, R_1+1]$, and conclude using the Lipschitz bound on $V$, translating and rescaling any cube $B$ so that it has a vertex in this set.
\end{proof}

Next, we show that a subset of $\partial Q_R$ which has small boundary must also have small flux.

\begin{lem}\label{lem:smallflux}
    In the event $E(R_1, R_0, \varepsilon)$, if $D \subseteq \partial Q_R$ has a $(d-2)$-rectifiable boundary for some $0 \leq R \leq R_1$, then
    \[
        \left|\int_D V(x) \cdot \nu(x) \; \mathrm{d}x\right| \leq C\|V\|_{L^\infty}R_0|\partial D| + \varepsilon|\partial Q_R|,
    \]
    where $\nu \colon \partial Q_R \to \R^d$ denotes the outward unit normal to $\partial Q_R$.
\end{lem}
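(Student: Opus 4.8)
The plan is to reduce the flux through $D$ to a sum of fluxes through faces of $Q_R$, then on each face cover $D$ (or its complement within that face) by cubes of radius roughly $R_0$ and apply the flux estimate~\eqref{eq:smallaverageepsilon} from the event $E(R_1, R_0, \varepsilon)$ on each cube, paying an error proportional to $\|V\|_{L^\infty}$ times the number of cubes meeting $\partial D$. Concretely, $\partial Q_R$ is a union of $2d$ flat $(d-1)$-dimensional faces, and $D$ restricted to each face is a subset $D_j$ with $(d-2)$-rectifiable boundary $\partial D_j$ (pieces of $\partial D$ together with possibly pieces of the edges of $Q_R$, but the latter contribute zero to the flux since $V\cdot\nu$ integrated over a face is what we want and the edges are $(d-2)$-dimensional). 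So it suffices to prove the bound face by face: if $F$ is a face and $D' \subseteq F$ has $(d-2)$-rectifiable boundary, then $\left|\int_{D'} V\cdot\nu\right| \leq C\|V\|_{L^\infty} R_0 |\partial D'| + \varepsilon|F|$, and summing over faces gives $\varepsilon \sum_j |F_j| = \varepsilon|\partial Q_R|$ and the boundary terms add up to at most $C\|V\|_{L^\infty}R_0|\partial D|$ (up to relabeling constants).

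For a single face $F$, tile $F$ by a grid of axis-aligned $(d-1)$-cubes $\{B_k\}$ of radius $R_0$ (shrinking the grid slightly near $\partial F$ if needed, or simply noting $F$ itself is a cube so its sidelength can be taken to be a multiple of $R_0$ after adjusting constants). Split the tiles into three classes: those entirely inside $D'$, those entirely outside $D'$, and those meeting $\partial D'$. Write
\[
    \int_{D'} V\cdot\nu = \sum_{B_k \subseteq D'} \int_{B_k} V\cdot\nu + \sum_{B_k \cap \partial D' \neq \emptyset} \int_{B_k \cap D'} V\cdot\nu.
\]
Each term in the first sum is at most $\varepsilon|B_k|$ by~\eqref{eq:smallaverageepsilon} (the cube $B_k$ has radius $R_0$, lies in $[R_0, R_1]$, and intersects $Q_{R_1}$ since $R \leq R_1$), and these add to at most $\varepsilon|F|$. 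Each term in the second sum is at most $C\|V\|_{L^\infty}|B_k| \leq C\|V\|_{L^\infty} R_0^{d-1}$ trivially. It remains to bound the number of tiles meeting $\partial D'$ by $C|\partial D'|/R_0^{d-2}$: this is the standard fact that a $(d-2)$-rectifiable set of $\mathcal{H}^{d-2}$-measure $m$ can be covered by $O(m/R_0^{d-2})$ cubes of radius $R_0$ — a Vitali/box-counting argument, using that $\partial D'$ has finite $(d-2)$-dimensional measure. Multiplying, the second sum contributes at most $C\|V\|_{L^\infty} R_0^{d-1} \cdot |\partial D'|/R_0^{d-2} = C\|V\|_{L^\infty} R_0 |\partial D'|$, which is the claimed boundary term.

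The main obstacle is the covering estimate — controlling the number of $R_0$-tiles that meet $\partial D'$ by $C|\partial D'| R_0^{-(d-2)}$ — and making sure the geometry near the edges of $Q_R$ (where faces meet, and where $\partial D$ may run along an edge) does not introduce extra terms. The cleanest way to handle the edges is to observe that the edges of $Q_R$ form a $(d-2)$-dimensional set, so when we decompose $\partial D$ into the portions lying on each open face versus the portions lying on edges, the edge portions can be absorbed: a piece of $\partial D'$ lying along $\partial F$ still only meets boundary-class tiles, whose count is still governed by $\mathcal{H}^{d-2}(\partial D')$. One should also double-check the bookkeeping when $|D'| > |F|/2$: in that case it is more efficient to estimate $\int_{F \setminus D'} V\cdot\nu$ instead and use $\int_F V\cdot\nu$ directly from~\eqref{eq:smallaverageepsilon} applied to $F$ (valid when $\mathrm{radius}(F) = R \leq R_1$, and if $R < R_0$ the whole statement is trivial since then $|\partial Q_R|$ and $|\partial D|$ are controlled crudely), but either way the final bound is symmetric in $D'$ and $F\setminus D'$ up to the $\varepsilon|F|$ term.
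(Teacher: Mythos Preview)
Your covering estimate is false as stated, and this is where the argument breaks. The claim that a $(d-2)$-rectifiable set of $\mathcal{H}^{d-2}$-measure $m$ meets at most $O(m/R_0^{d-2})$ grid cubes of radius $R_0$ fails already for boundaries of sets. Take $d=3$, fix a face $F$, and let $D'$ be a union of $N$ disjoint disks of radius $\delta \ll R_0$, one in each tile $B_k$. Then $|\partial D'| = 2\pi N\delta$, but every one of the $N$ tiles meets $\partial D'$; your bound would require $N \leq C N\delta / R_0$, which is false. In this situation your ``boundary tiles'' term gives $\|V\|_{L^\infty} R_0^{d-1} \cdot N$, which is of order $\|V\|_{L^\infty}|F|$ and not at all comparable to $\|V\|_{L^\infty} R_0 |\partial D'|$. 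The statement of the lemma is still true in this example (the actual flux is at most $\|V\|_{L^\infty}\pi N\delta^2 \leq C\|V\|_{L^\infty} R_0\cdot 2\pi N\delta$), but your proof does not see it, because the crude bound $\|V\|_{L^\infty}|B_k|$ on a boundary tile throws away the information that $D'\cap B_k$ is tiny.

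The paper's proof fixes exactly this by working tile by tile with the \emph{relative isoperimetric inequality} rather than counting tiles. One sets $P_i := \mathcal{H}^{d-2}(\partial D \cap B_i)$ and $S_i := \min(|B_i\cap D|,\,|B_i\setminus D|)$; isoperimetry gives $S_i \leq C P_i^{(d-1)/(d-2)}$, the trivial bound gives $S_i \leq C R_0^{d-1}$, and interpolating yields $S_i \leq C P_i R_0$. Since one of $B_i\cap D$, $B_i\setminus D$ has measure $S_i$, its flux is at most $\|V\|_{L^\infty}S_i$, and combining with $\bigl|\int_{B_i} V\cdot\nu\bigr|\leq \varepsilon|B_i|$ gives $\bigl|\int_{B_i\cap D} V\cdot\nu\bigr|\leq \varepsilon|B_i| + C\|V\|_{L^\infty}R_0 P_i$ for \emph{every} tile, boundary or not. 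Summing over $i$ then produces $\varepsilon|\partial Q_R| + C\|V\|_{L^\infty}R_0|\partial D|$ directly, with no covering number needed.
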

\begin{proof}
    This is exactly Lemma 3.3 from Burago\textendash{}Ivanov\textendash{}Novikov~\cite{Burago}; for completeness, we include the proof here.

    Partition $\partial Q_R$ into at least $C^{-1}{\left(\frac{R}{R_0}\right)}^{d-1}$ equally sized $(d-1)$-dimensional cubes, called $B_1, \dots, B_n$, of radius between $R_0$ and $2R_0$. For each $i$, define $P_i := |\partial D \cap B_i|$ and $S_i := \min \{ |B_i \cap D|, |B_i \setminus D|\}$. The isoperimetric inequality says that \[ S_i \leq CP_i^{(d-1)/(d-2)} \] and the fact that $S_i \subseteq B_i$ implies that \[ S_i \leq CR_0^{d-1}. \] Interpolating between these bounds, we conclude that \[ S_i \leq CP_i R_0. \]

    On the other hand, in the event $E(R_1, R_0, \varepsilon)$ we have \[ \left|\int_{B_i \cap D} V(x) \cdot \nu(x) \; \mathrm{d}x + \int_{B_i \setminus D} V(x) \cdot \nu(x) \; \mathrm{d}x\right| \leq \varepsilon|B_i|. \] Since one of the sets $B_i \cap D$ or $B_i \setminus D$ has measure $S_i$, we conclude that one of the integrals above has absolute value at most $\|V\|_{L^\infty}S_i$, so they both have absolute value at most $\varepsilon|B_i| + \|V\|_{L^\infty}S_i$. We conclude that \[ \left|\int_{B_i \cap D} V(x) \cdot \nu(x) \; \mathrm{d}x\right| \leq \varepsilon|B_i| + C\|V\|_{L^\infty}P_i R_0. \] We conclude by summing over $i$.
\end{proof}

The next lemma shows a weak form of controllability. It can be found in Cardaliaguet\textendash{}Souganidis~\cite{CardSoug} and we include it here as well for completeness.

\begin{lem}\label{lem:cones}
    The reachable set $\mathcal{R}^-_1(x_0)$ contains the cone \[ \left\{x_0 + tv \mid v \in B_{1/2}(V(x_0)), \, 0 \leq t \leq {\left(2\Lip(V)(1+\|V\|_{L^\infty})\right)}^{-1}\right\}. \] Furthermore, suppose that $x_0 \not\in {\left(\mathcal{R}^-_{T+1}(x_0)\right)}^\mathrm{o}$. Then $\mathcal{R}^-_T(x)$ is disjoint from the cone \[ \left\{x_0 + tv \mid v \in B_{1/2}(V(x_0)), \, -{\left(2\Lip(V)(1+\|V\|_{L^\infty})\right)}^{-1} \leq t < 0 \right\}. \]
\end{lem}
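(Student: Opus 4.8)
The plan is to exhibit explicit admissible controls realizing each inclusion. Write $\tau := {\left(2\Lip(V)(1+\|V\|_{L^\infty})\right)}^{-1}$ for the common length of the two cones, and recall that $\Lip(V)\ge 1$ forces $\tau\le\tfrac12$.

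For the first assertion, fix a target $y=x_0+tv$ with $v\in B_{1/2}(V(x_0))$ and $0\le t\le\tau$, and use on $[0,t]$ the straight-line control $\alpha(r):=v-V(X_{x_0}^\alpha(r))$. With this choice $\dot X_{x_0}^\alpha=\alpha+V(X_{x_0}^\alpha)=v$, so $X_{x_0}^\alpha(r)=x_0+rv$ and in particular $X_{x_0}^\alpha(t)=y$. The only thing left is admissibility: along this path $|X_{x_0}^\alpha(r)-x_0|=r|v|\le\tau(1+\|V\|_{L^\infty})={\left(2\Lip(V)\right)}^{-1}$, so the Lipschitz bound gives $|V(X_{x_0}^\alpha(r))-V(x_0)|\le\tfrac12$, hence $|\alpha(r)|\le|v-V(x_0)|+\tfrac12\le 1$. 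Since $t\le\tau\le 1$ this shows $y\in\mathcal{R}_t(x_0)\subseteq\mathcal{R}^-_1(x_0)$.

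For the second assertion I would argue by contraposition. Suppose some $z=x_0+tv$ with $v\in B_{1/2}(V(x_0))$ and $-\tau\le t<0$ lies in $\mathcal{R}^-_T(x_0)$, say $z\in\mathcal{R}_s(x_0)$ with $0\le s\le T$; I claim there is a radius $\rho>0$ with $B_\rho(x_0)\subseteq\mathcal{R}_{-t}(z)$. Granting this, concatenating the control that reaches $z$ from $x_0$ with the controls emanating from $z$ yields $B_\rho(x_0)\subseteq\mathcal{R}_{s-t}(x_0)\subseteq\mathcal{R}^-_{T+1}(x_0)$, the last step because $s-t\le T+\tau\le T+1$; hence $x_0\in{\left(\mathcal{R}^-_{T+1}(x_0)\right)}^{\mathrm{o}}$, contrary to hypothesis, so no such $z$ can exist. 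To produce the ball $B_\rho(x_0)$ inside $\mathcal{R}_{-t}(z)$ I would again use straight-line controls, but in \emph{two phases}. First, travel from $z$ in direction $v$ for time $\tfrac{-t}{2}$; this is admissible exactly as in the first assertion, and it carries the path to $p:=x_0+\tfrac{t}{2}v$, a point within $\tfrac{-t}{2}(1+\|V\|_{L^\infty})\le{\left(4\Lip(V)\right)}^{-1}$ of $x_0$. Second, from $p$ and with the remaining time $\tfrac{-t}{2}$, run the constant-velocity control aimed at an arbitrary target $x_0+\eta$, which requires velocity $v+\tfrac{2}{-t}\eta$. On this phase the path stays within ${\left(4\Lip(V)\right)}^{-1}$ of $x_0$ provided $|\eta|\le{\left(4\Lip(V)\right)}^{-1}$, so $|V(X)-V(x_0)|\le\tfrac14$ and $|\alpha(r)|\le|v-V(x_0)|+\tfrac{2|\eta|}{-t}+\tfrac14\le\tfrac34+\tfrac{2|\eta|}{-t}$, which is at most $1$ as soon as $|\eta|\le\tfrac{-t}{8}=:\rho$. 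This gives $B_\rho(x_0)\subseteq\mathcal{R}_{-t}(z)$, as claimed.

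The step I expect to be the main obstacle is the two-phase construction. Going straight from $z$ to a target near $x_0$ in one shot does not work: $|z-x_0|$ can be as large as ${\left(2\Lip(V)\right)}^{-1}$, and then the Lipschitz correction term in $\alpha$ already saturates the constraint $|\alpha|\le 1$, leaving no slack to tilt the velocity away from $v$ and thereby cover a whole neighborhood of $x_0$. Getting close to $x_0$ first --- where $V$ is within $\tfrac14$ of $V(x_0)$ and a fixed amount of slack reopens --- and only then steering is what repairs this, and verifying that the resulting budget of constants closes is the one genuinely delicate computation; everything else reduces to solving \eqref{controlled-path-ode} with piecewise-constant velocities.
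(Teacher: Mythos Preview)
Your argument is correct. The first assertion is proved exactly as in the paper: straight-line velocity $v$, control $\alpha=v-V(X)$, and admissibility via the Lipschitz bound $|V(x_0+t'v)-V(x_0)|\le t'|v|\Lip(V)\le\tfrac12$.

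For the second assertion the paper is extremely terse, writing only that ``the contrapositive follows by the same argument.'' Taken literally, that argument lets you steer from $z$ back to $x_0$ along direction $v$, but it does not by itself put $x_0$ in the \emph{interior} of $\mathcal{R}^-_{T+1}(x_0)$; some additional slack is needed to reach a whole ball. Your two-phase construction supplies this slack explicitly, with the clean radius $\rho=-t/8$, and the convexity observation (that the straight segment from $p$ to $x_0+\eta$ stays within $\max(|p-x_0|,|\eta|)$ of $x_0$) is exactly the right way to close the estimate.

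One minor correction to your final paragraph: the one-shot approach actually \emph{does} work, because $B_{1/2}(V(x_0))$ is open. Writing $\delta:=\tfrac12-|v-V(x_0)|>0$, the segment from $z$ to $x_0+\eta$ stays (again by convexity) inside $B_{(2\Lip(V))^{-1}}(x_0)$ whenever $|\eta|\le(2\Lip(V))^{-1}$, so $|V(X)-V(x_0)|\le\tfrac12$ along it and the tilted velocity $v+\eta/(-t)$ gives $|\alpha|\le(1/2-\delta)+|\eta|/|t|+1/2\le1$ once $|\eta|\le\delta|t|$. This is presumably what the paper's one-liner intends. Your two-phase route has the advantage that the resulting radius does not depend on the margin $\delta$, but it is not forced.
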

\begin{proof}
    Let $v \in B_{1/2}(V(x_0))$ and $t \in (0, {\left(2\Lip(V)(1+\|V\|_{L^\infty})\right)}^{-1})$. For any $t' \in (0, t)$, we have \[ |V(x_0) - V(x_0 + t'v)| \leq t'|v|\Lip(V) \leq \frac12. \] Therefore, $v \in B_1(V(x_0+t'v))$ for all $t \in (0, t)$, so $x_0 + tv \in \mathcal{R}^-_t(x_0) \subseteq \mathcal{R}^-_1(x_0)$, which was the first claim. The contrapositive of the second claim follows by the same argument.
\end{proof}

Finally, we show that, on most of the boundary of the reachable set, the vector field $V$ points toward the interior of the reachable set.

\begin{lem}\label{lem:smallP}
    Let $R > 0$ and $T_0 \geq 0$. Then for every $\varepsilon > 0$, there is some $T_0 < T \leq T_0 + CR^d/\varepsilon$ such that \[ \left|\left\{x \in (\partial \mathcal{R}^-_T(0)) \cap Q_R \mid V(x) \cdot \nu(x) \geq -\frac12\right\}\right| \leq \varepsilon, \] where $|\cdot|$ above denotes the Hausdorff $(d-1)$-measure.
\end{lem}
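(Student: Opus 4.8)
The plan is to track the volume $v(T) := |\mathcal{R}_T^-(0) \cap Q_R|$. Since $\mathcal{R}_T^-(0)$ is increasing in $T$, the function $v$ is nondecreasing, and it is bounded above by $|Q_R| \leq CR^d$. I will show that whenever the ``bad set'' $B_T := \{x \in (\partial \mathcal{R}^-_T(0)) \cap Q_R \mid V(x) \cdot \nu(x) \geq -\tfrac12\}$ has $\mathcal{H}^{d-1}$-measure exceeding $\varepsilon$, the volume $v$ is increasing at rate at least $\tfrac{\varepsilon}{2}$; i.e. $\frac{d}{dT} v(T) \geq \tfrac12 \mathcal{H}^{d-1}(B_T)$ for a.e.\ $T$. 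Granting this, if $\mathcal{H}^{d-1}(B_T) > \varepsilon$ held for every $T \in (T_0, T_0 + S]$, then $v(T_0 + S) - v(T_0) > \tfrac{S\varepsilon}{2}$, which exceeds $|Q_R| \leq CR^d$ once $S = C' R^d / \varepsilon$ with $C' = C'(d)$ large enough, contradicting $v \leq |Q_R|$. Hence some $T \in (T_0, T_0 + C'R^d/\varepsilon]$ has $\mathcal{H}^{d-1}(B_T) \leq \varepsilon$, which is the claim.

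\textbf{The differential inequality.} The reachable set $\mathcal{R}_T^-(0)$ is the $T$-sublevel set of the first passage function $\theta(0, \cdot)$, and as $T$ increases it evolves under the differential inclusion $\dot X \in \overline{B_1}(V(X))$. Because the evolution is a monotone union in $T$, the boundary never moves inward, and at a boundary point $x$ with measure-theoretic outward normal $\nu(x)$ the outward normal velocity equals the support function of the velocity set $\overline{B_1}(V(x))$ in the direction $\nu(x)$, namely $1 + V(x) \cdot \nu(x)$. On the bad set $B_T$ this is at least $\tfrac12$. Concretely, for a point $x \in B_T$ one runs the controlled path from $x$ with the constant control $\alpha \equiv \nu(x)$ for a short time $h$: since $\mathcal{R}_T^-(0)$ has finite perimeter, for $\mathcal{H}^{d-1}$-a.e.\ such $x$ the set looks like the half-space $\{y : (y - x)\cdot\nu(x) \leq 0\}$ in the blow-up, while $X_x^\alpha$ leaves $x$ with velocity $\nu(x) + V(x)$, whose normal component is $\geq \tfrac12$; so $X_x^\alpha$ enters territory outside $\mathcal{R}_T^-(0)$. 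Covering $B_T$ by these short excursions and controlling their overlap by a Vitali-type argument, one sees that $\mathcal{R}_{T+h}^-(0)$ contains, up to an error $o(h)$, an outward collar of $B_T$ of width $\sim h/2$, so $v(T + h) - v(T) \geq \tfrac{h}{2}\mathcal{H}^{d-1}(B_T) - o(h)$; dividing by $h$ and letting $h \to 0^+$ gives the stated inequality. (The portion of $\partial(\mathcal{R}_T^-(0) \cap Q_R)$ lying in $\partial Q_R$ contributes nonnegatively to $v'$, again because $\mathcal{R}_T^-(0)$ is increasing, so it can only help.)

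\textbf{Conclusion.} Combining the two pieces: integrate $\frac{d}{dT} v \geq \tfrac12 \mathcal{H}^{d-1}(B_T)$ over $(T_0, T_0 + S]$ with $S = C'R^d/\varepsilon$ and compare with the a priori bound $v \leq |Q_R| \leq CR^d$; this forces $\mathcal{H}^{d-1}(B_T) \leq \varepsilon$ for at least one $T$ in the interval, as explained above.

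\textbf{Main obstacle.} The delicate point is making the differential inequality rigorous. The boundaries $\partial \mathcal{R}_T^-(0)$ need not be smooth, and $\theta(0, \cdot)$ is not Lipschitz (the velocity set $\overline{B_1}(V(x))$ may contain the origin), so one cannot simply invoke the coarea formula; I would instead argue entirely through the elementary ``outward collar'' estimate above, which needs only that $\mathcal{R}_T^-(0)$ is a set of finite perimeter in $Q_R$ and that the excursion velocity has normal component bounded below by $\tfrac12$ on $B_T$, absorbing the geometric overlap of the excursions into a dimensional constant via a standard covering lemma. A secondary bookkeeping issue is to verify that the exceptional set of times $T$ at which the measure-theoretic description of $\partial \mathcal{R}_T^-(0)$ fails has measure zero, so that the integrated inequality is valid; this follows from monotonicity of $T \mapsto v(T)$ together with the coarea/slicing structure of the family $\{\mathcal{R}_T^-(0)\}_T$.
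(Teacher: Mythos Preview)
Your argument is correct and follows essentially the same approach as the paper: track the volume $v(T)=|\mathcal{R}^-_T(0)\cap Q_R|$, bound it above by $|Q_R|\le CR^d$, use that the normal velocity on the ``bad'' part of the boundary is at least $\tfrac12$, and pigeonhole. The only difference is presentational: the paper simply quotes the exact formula $\partial_t|\mathcal{R}^-_t(0)\cap Q_R|=\int_{(\partial\mathcal{R}^-_t(0))\cap Q_R}(1+V\cdot\nu)_+$ (relying on the $C^{1,1}$ regularity of $\partial\mathcal{R}^-_T(0)$ from Cannarsa--Frankowska, cited elsewhere in the paper) and then applies the mean value theorem, whereas you give a hands-on ``outward collar'' argument to obtain the weaker inequality $v'(T)\ge\tfrac12\mathcal H^{d-1}(B_T)$, which is all that is needed.
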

\begin{proof}
    We use the fact that $t \mapsto |\mathcal{R}^-_t(0) \cap Q_R|$ is Lipschitz with derivative \[ \partial_t |\mathcal{R}^-_t(0)| = \int_{(\partial \mathcal{R}^-_t(0)) \cap Q_R} {(1 + V(x) \cdot \nu(x))}_+ \; \mathrm{d}x \] almost everywhere, where $\nu$ denotes the outward unit normal to $\mathcal{R}^-_t(0)$. Also, for any $t \geq 0$ we have $|\mathcal{R}^-_t(0) \cap Q_R| \leq |Q_R| \leq {(2R)}^d$. The claim follows with $C = 2^{d+1}$ by the mean value theorem.
\end{proof}

All the ingredients for the proof of our local waiting time estimate are now in place.

\begin{thm}\label{thm:maindivfree}
    Suppose that $\div V = 0$ almost surely. Let $W := \inf \{t > 0 \mid \mathcal{R}^-_t(0) \supseteq B_{1/2} \}$. Then for any $\lambda \geq 1$, \[ \P[W \geq \lambda] \leq C\exp\left(-C^{-1}\lambda^{(d-1)/d}{\Lip(V)}^{3-3d}{(1+\|V\|_{L^\infty})}^{3-5d}\right). \]
\end{thm}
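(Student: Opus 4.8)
The plan is to run the Burago--Ivanov--Novikov metric argument inside a single cube whose radius is calibrated to $\lambda$. Write $\ell := (2\Lip(V)(1+\|V\|_{L^\infty}))^{-1}$ for the scale appearing in Lemma~\ref{lem:cones}. I would fix parameters $R_0 \asymp \lambda^{1/d}$ (times a suitable power of $\ell$), $R$ a large $(\Lip(V),\|V\|_{L^\infty})$-dependent multiple of $R_0$, $R_1$ a large multiple of $R$, and tolerances $\varepsilon,\varepsilon'$ that are powers of $\ell$, the exact exponents being forced by the inequalities in the last step. Since $R_0^{d-1}\asymp\lambda^{(d-1)/d}$, Lemma~\ref{lem:fluxhelper} gives $\P[E(R_1,R_0,\varepsilon)^{\mathrm c}]\le(\text{poly in }\lambda,\Lip(V),\|V\|_{L^\infty})\cdot\exp(-c\varepsilon^2 R_0^{d-1}/\|V\|_{L^\infty})$, and (spending a fixed fraction of the exponent) the polynomial prefactor is absorbed; collecting the powers of $\Lip(V)$ and $1+\|V\|_{L^\infty}$ hidden in $R_0,\varepsilon$ then yields the stated probability bound. (When $\lambda$ is below a threshold depending on these norms the claimed bound exceeds $1$, so there is nothing to prove.) It remains to show that $W<\lambda$ on the event $E(R_1,R_0,\varepsilon)$.

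Suppose $W\ge\lambda$ and fix $x_0\in B_{1/2}$ with $\theta(0,x_0)\ge\lambda$. Using Lemma~\ref{lem:smallP} with radius $R$, tolerance $\varepsilon'$ (chosen so that $CR^d/\varepsilon'\le\lambda$), and $T_0=1$, I get a good time $T\in(1,\lambda]$ at which the fast set $P:=\{x\in(\partial\mathcal R^-_T(0))\cap Q_R : V(x)\cdot\nu(x)\ge-\tfrac12\}$ has $|P|\le\varepsilon'$; write $U:=\mathcal R^-_T(0)$. Two deterministic consequences of incompressibility then drive the argument. First, applying the divergence theorem to $U\cap Q_R$, using Lemma~\ref{lem:smallflux} to bound the flux of $V$ across $\overline U\cap\partial Q_R$ (its $(d-2)$-rectifiable rim being controlled by the relative perimeter via the isoperimetric inequality on $\partial Q_R$) and the definition of $P$ on the rest of the boundary, I bound $|(\partial U)\cap Q_R^{\mathrm o}|$ by $C(\Lip(V)(1+\|V\|_{L^\infty})R_0)^{d-1}+C\varepsilon R^{d-1}+C(1+\|V\|_{L^\infty})\varepsilon'$; since $R$ is a large multiple of $\Lip(V)(1+\|V\|_{L^\infty})R_0$, the relative isoperimetric inequality in $Q_R$ forces $\min(|U\cap Q_R|,|Q_R\setminus U|)\ll|Q_R|$. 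Second, since $\theta(0,x_0)\ge\lambda>T+1$, the (time-reversed) second part of Lemma~\ref{lem:cones} supplies a cone $\mathcal C$ based at $x_0$, of volume $\ge c\ell^d$ and contained in $B_1\subseteq Q_R$, disjoint from $U$. As $\mathcal C\subseteq Q_R\setminus U$, the small quantity above is $|Q_R\setminus U|$: at time $T$ the flame has filled $Q_R$ up to a remainder of volume $\ll|Q_R|$ that contains $x_0$ and $\mathcal C$.

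Let $H$ be the connected component of $Q_R^{\mathrm o}\setminus\overline U$ containing $x_0$, so $\mathcal C\subseteq H$ and $c\ell^d\le|H|\ll|Q_R|$, and write $\partial H=D\sqcup F$ with $D:=\partial H\cap\partial Q_R$ and $F:=\partial H\setminus D\subseteq\partial U$. The divergence theorem on $H$ gives $\int_F V\cdot\nu+\int_D V\cdot\nu=0$; on $F\setminus P$ the outward normal of $H$ is the inward normal of $U$, so $V\cdot\nu>\tfrac12$ there and $\int_F V\cdot\nu\ge\tfrac12|F|-C(1+\|V\|_{L^\infty})\varepsilon'$, while $\int_D V\cdot\nu$ is controlled by Lemma~\ref{lem:smallflux}. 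If $H$ does not meet $\partial Q_R$ then $D=\emptyset$ and this already gives $|F|\le C(1+\|V\|_{L^\infty})\varepsilon'$; if $H$ does reach $\partial Q_R$, then along the corridor of $H$ that reaches the boundary two nearly-opposite wall portions cannot both lie in $F\setminus P$ (there $V\cdot\nu<-\tfrac12$ would hold against opposite normals), so the corridor walls lie mostly in $P$, forcing both $|F|$ and the exit rim $\partial D$ to be small. Either way $|F|$ is small, while the relative isoperimetric inequality applied to $H$ gives $|F|\ge c|H|^{(d-1)/d}\ge c\ell^{d-1}$; for suitably small $\varepsilon,\varepsilon'$ this is a contradiction. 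The hard part is the bookkeeping in this last step, which also produces the exponents $3-3d$ and $3-5d$: $R_0$ is pinned near $\lambda^{1/d}$ by the probability estimate, $R$ must exceed a $(\Lip(V),\|V\|_{L^\infty})$-dependent multiple of $R_0$ for the filling statement, $\varepsilon'$ must be at least $\asymp R^d/\lambda$ for Lemma~\ref{lem:smallP} yet small against $\ell^{d-1}/(1+\|V\|_{L^\infty})$ for the contradiction, and $\varepsilon$ must be small against both $\ell$ and the filling deficit; exhibiting a consistent choice and tracking the resulting powers of $\Lip(V)$ and $1+\|V\|_{L^\infty}$ is the technical core of the proof.
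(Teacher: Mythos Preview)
Your proposal uses the same toolbox (Lemmas~\ref{lem:fluxhelper}--\ref{lem:smallP}) as the paper but organizes the argument around a \emph{single} cube $Q_R$ and a hole component $H$, whereas the paper (following Burago--Ivanov--Novikov) runs a growth argument over the whole family of nested cubes $\{Q_r\}_{0\le r\le R_1}$. This is not merely a stylistic difference; your single-scale approach has a genuine gap.

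The problem is your ``filling'' step. Write $S_R=(\partial U)\cap Q_R$, $D_R=\overline U\cap\partial Q_R$, and $L_R=(\partial U)\cap\partial Q_R$. The divergence theorem together with Lemma~\ref{lem:smallflux} only yields
\[
\tfrac12|S_R|\;\le\;(1+\|V\|_{L^\infty})|P|+C\|V\|_{L^\infty}R_0\,|L_R|+C\varepsilon R^{d-1},
\]
and you have no a~priori upper bound on the rim $|L_R|$. The isoperimetric inequality on $\partial Q_R$ goes the wrong way (it bounds $\min(|D_R|,|\partial Q_R\setminus D_R|)$ from above by $C|L_R|^{(d-1)/(d-2)}$, i.e.\ it \emph{lower}-bounds $|L_R|$), and the coarea inequality $\int_0^R|L_r|\,dr\le|S_R|$ is circular here. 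So your claimed bound on $|(\partial U)\cap Q_R^{\mathrm o}|$ is unjustified. The same obstruction reappears in your hole argument when $H$ reaches $\partial Q_R$: the ``corridor'' heuristic (opposite walls cannot both satisfy $V\cdot\nu>\tfrac12$) is not rigorous in $d\ge3$ and, in any case, gives no quantitative control on $|\partial D|$, which you need to invoke Lemma~\ref{lem:smallflux}.

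The paper avoids this entirely by never seeking an upper bound on $|L_R|$. Instead it combines the divergence bound, the coarea inequality $|S_R|\ge\int_0^R|L_r|\,dr$, and the isoperimetric \emph{lower} bound on $|L_R|$ to obtain the integro-differential inequality
\[
|L_R|\;\ge\;C^{-1}\Bigl(\int_0^R|L_r|\,dr-(1+2\|V\|_{L^\infty})|P_R|\Bigr)^{(d-2)/(d-1)}
\]
(the case $d=2$ is handled by adding a dummy dimension). Seeding this with $\int_0^1|L_r|\,dr\ge\varepsilon$ from Lemma~\ref{lem:cones} and integrating gives $\int_0^R|L_r|\,dr\gtrsim R^{d-1}$; feeding that back through Lemma~\ref{lem:smallflux} upgrades this to $|L_R|\gtrsim R^{d-1}/(R_0(1+\|V\|_{L^\infty}))$, hence $|S_{R_1}|\gtrsim R_1^d/(R_0(1+\|V\|_{L^\infty}))$. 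Choosing $R_0=R_1/(C_1(1+\|V\|_{L^\infty})^2)$ with $C_1$ large makes $|S_{R_1}|$ exceed $2(1+\|V\|_{L^\infty})|\partial Q_{R_1}|$, contradicting the basic divergence bound $|S_R\setminus P_R|\le2\|V\|_{L^\infty}(|P_R|+|D_R|)$. The exponents $3-3d$ and $3-5d$ come from tracking $\varepsilon$, $R_0$, $R_1$ through this growth argument, not from the single-scale compatibility constraints you list.
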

\begin{proof}
    Assume $d \geq 3$ (if $d=2$, just add another dimension in which everything is constant). We follow the proof of Burago\textendash{}Ivanov\textendash{}Novikov~\cite{Burago}, keeping track of an extra error term to get a quantitative estimate.

    Let $T > 0$ and $R > 0$. The boundary $\partial (\mathcal{R}^-_T(0) \cap Q_R)$ has two main parts: we define \[ S_R := (\partial \mathcal{R}^-_T(0)) \cap Q_R \] and \[ D_R := \mathcal{R}^-_T(0) \cap (\partial Q_R). \] Further, we let \[ L_R := (\partial \mathcal{R}^-_T(0)) \cap (\partial Q_R). \] Generically, $S_R$ and $D_R$ are $(d-1)$-dimensional and $L_R$ is $(d-2)$-dimensional. Cannarsa\textendash{}Frankowska~\cite{CannarsaFrankowska} proved that the boundary of the reachable set $\mathcal{R}^-_T(0)$ is $C^{1,1}$ everywhere except at the origin, so, as in Burago\textendash{}Ivanov\textendash{}Novikov~\cite{Burago} (see the remark after Lemma~2.4), it can be equipped with a continuous unit normal and the divergence theorem holds.

    \begin{figure}
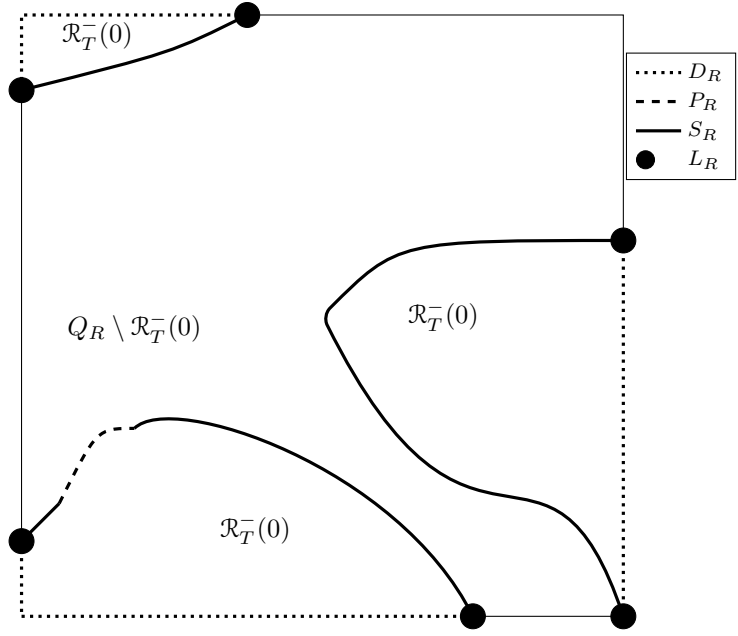

        \centering
        \includestandalone{Rtfigure}
        \caption{Parts of the reachable set $\mathcal{R}^-_T(0)$, the cube $Q_R$, and boundaries $S_R$, $D_R$, $L_R$, and $P_R$.}
    \end{figure}

    If $x \in S_R$, let $\nu(x)$ denote the outward unit normal to $\mathcal{R}^-_T(0)$ and if $x \in \partial Q_R$, let $\nu(x)$ denote the outward unit normal to $Q_R$. We also define the subset $P_R$ of $S_R$ to be the part of the boundary of the reachable set which is growing at a speed of more than $\frac12$: \[ P_R := \left\{x \in S_R \mid V(x) \cdot \nu(x) \geq -\frac12\right\}. \] Integrate $\div V = 0$ in $\mathcal{R}^-_T(0) \cap Q_R$ to find
    \begin{equation}\label{eq:divergence}
        |S_R \setminus P_R| \leq 2 \int_{P_R \cup D_R} V(x) \cdot \nu(x) \; \mathrm{d}x \leq 2\|V\|_{L^\infty}(|P_R| + |D_R|),
    \end{equation}
    where $|\cdot|$ denotes the Hausdorff measure of appropriate dimension (here it's $d-1$).

    On the other hand, the co-area inequality yields
    \begin{equation}\label{eq:coarea}
        |S_R| \geq \int_0^R |L_r| \; \mathrm{d}r.
    \end{equation}

    We also apply the isoperimetric inequality in $\partial Q_R$:
    \begin{equation}
        \min(|D_R|, |\partial Q_R \setminus D_R|) \leq C|L_R|^{\frac{d-1}{d-2}},
    \end{equation}
    and so the divergence theorem applied to $Q_R$ yields
    \begin{equation}\label{eq:isoperimetric}
        \left|\int_{D_R} V(x) \cdot \nu(x) \; \mathrm{d}x\right| \leq C|L_R|^{\frac{d-1}{d-2}}.
    \end{equation}

    Combining~\eqref{eq:divergence},~\eqref{eq:coarea}, and~\ref{eq:isoperimetric} yields
    \begin{equation}\label{eq:initial-L-bound}
        |L_R| \geq C^{-1}{\left(\int_0^R |L_r| \; \mathrm{d}r \, - \, (1+2\|V\|_{L^\infty})|P_R|\right)}^{\frac{d-2}{d-1}}.
    \end{equation}

    Everything so far has only used incompressibility and boundedness of $V$ in $L^\infty$, and applies for all $T, R > 0$. From now on, we start selecting parameters to show that $B_{1/2} \subseteq \mathcal{R}^-_T(0)$. For the rest of the proof, we assume for contradiction that $B_{\frac12} \not\subseteq \mathcal{R}^-_{T+1}(0)$.

    First, choose $\varepsilon > 0$ such that $\int_0^1 |L_r| \; dr \geq \varepsilon$. By Lemma~\ref{lem:cones} and the isoperimetric inequality, we can choose
    \begin{equation}\label{eq:epsdef}
        \varepsilon := C_0^{-1}{\left(\Lip(V)(1+\|V\|_{L^\infty})\right)}^{1-d},
    \end{equation}
    where $C_0 = C_0(d) > 2^{d-1}$ will be chosen by the end of the proof.

    Next, choose \[ R_1 := {\left(\frac{\lambda\varepsilon}{1+\|V\|_{L^\infty}}\right)}^{1/d} \] and \[ R_0 := \frac{R_1}{C_1{(1+\|V\|_{L^\infty})}^{2}}, \] where $C_1 = C_1(d) > 0$ will also be chosen by the end of the proof. By Lemma~\ref{lem:fluxhelper}, the event $E(R_1, R_0, \varepsilon)$ has probability at least
    \begin{align*}
        \P[E(R_1, R_0, \varepsilon)] &\geq 1-CR_0^{d+1}\|V\|_{C^{0,1}}^C\exp\left(-C^{-1}\lambda^{(d-1)/d}\varepsilon^{2+(d-1)/d}{(1+\|V\|_{L^\infty})}^{-2d}\right)\\
        &\geq 1-C\exp\left(-C^{-1}\lambda^{(d-1)/d}{\Lip(V)}^{3-3d}{(1+\|V\|_{L^\infty})}^{3-5d}\right).
    \end{align*}

    Work in the event $E(R_1, R_0, \varepsilon)$. By Lemma~\ref{lem:smallP}, we can choose $1 \leq T \leq CR_1^d(1+\|V\|_{L^\infty})\varepsilon^{-1} \leq C\lambda$ such that $(1+2\|V\|_{L^\infty})|P_R| \leq \frac{\varepsilon}{2}$. Since we assume that $B_{1/2} \not\in \mathcal{R}^-_{T+1}(0)$, our choice of $T$ does not affect $\varepsilon$. Plug our choice of $\varepsilon$ and $T$ into~\eqref{eq:initial-L-bound} to see that \[ 
    \frac{d}{dR} \int_0^R |L_r| \; \mathrm{d}r \geq C^{-1}{\left(\int_0^R |L_r| \; \mathrm{d}r\right)}^\frac{d-2}{d-1}
    \] for all $1 \leq R \leq R_1$ and \[ \int_0^1 |L_r| \; \mathrm{d}r \geq \frac{\varepsilon}{2}, \] which implies
    \begin{equation}\label{eq:polygrowth}
        \int_0^R |L_r| \; \mathrm{d}r \geq C^{-1}{(R-1)}^{d-1}
    \end{equation}
    for all $1 \leq R \leq R_1$. We make a note that the constant $C$ in~\ref{eq:polygrowth} does not depend on $C_0$ or $C_1$.

    Combining~\eqref{eq:polygrowth} with~\eqref{eq:divergence} and~\eqref{eq:coarea}, we conclude that
    \begin{equation}\label{eq:fluxgrowth}
        C^{-1}{(R-1)}^{d-1} \leq (1+2\|V\|_{L^\infty})|P_R| + \int_{D_R} V(x) \cdot \nu(x) \; \mathrm{d}x.
    \end{equation}

    Apply Lemma~\ref{lem:smallflux} to $D_R$ and combine with~\eqref{eq:fluxgrowth} to obtain
    \begin{equation}
        C^{-1}{(R-1)}^{d-1} \leq (1+2\|V\|_{L^\infty})|P_R| + C(1+\|V\|_{L^\infty})R_0|L_R| + 2d\varepsilon R^{d-1}.
    \end{equation}
    As long as $\varepsilon \leq \frac{1}{2^{d+2}dC}$ and $|P_R| \leq \frac{1}{4C(1+\|V\|_{L^\infty})}$, which we ensure by choosing $C_0 > 0$ sufficiently large in~\eqref{eq:epsdef}, then for $R \geq 2$ we have
    \begin{equation}
        R^{d-1} \leq CR_0(1+\|V\|_{L^\infty})|L_R|.
    \end{equation}

    We integrate and apply~\eqref{eq:coarea} to conclude that
    \begin{equation}
        |S_R| \geq \frac{R^d-1}{CR_0(1+\|V\|_{L^\infty})}
    \end{equation}
    for every $2 \leq R \leq R_1$.

    At $R = R_1$, this yields \[ |S_{R_1}| \geq C_1 C^{-1} (1+\|V\|_{L^\infty}){R_1}^{d-1}. \]
    To conclude, we choose $C_1$ large enough so that $|S_{R_1}| \geq 2(1+\|V\|_{L^\infty})(1 + d2^d R_1^{d-1})$, which contradicts~\eqref{eq:divergence}, as $|P_{R_1}| \leq 1$ and $D_{R_1} \subseteq \partial Q_{R_1}$ and hence $|D_{R_1}| \leq |\partial Q_{R_1}| = d2^d R_1^{d-1}$.
\end{proof}

\subsection{The compressible case}
Next, we adapt the proof in the incompressible case to allow $|\div V|$ to be nonzero but small.

\begin{prop}\label{prop:mainsmalldiv}
    Let $W := \inf \{t > 0 \mid \mathcal{R}^-_t(0) \supseteq B_{1/2} \}$. For each $p > 0$, there is $\varepsilon(\Lip(V), \|V\|_{L^\infty}, p, d) > 0$ such that, if $|\div V| \leq \varepsilon$ almost surely, then \[ \P\left[W \geq C {\Lip(V)}^{3d}{(1+\|V\|_{L^\infty})}^{5d+4} {\left(\log p^{-1}\right)}^{d/(d-1)}\right] \leq p. \] Furthermore, we can choose \[ \varepsilon \geq C^{-1}{\Lip(V)}^{-3}{(1+\|V\|_{L^\infty})}^{-6} {\left(\log p^{-1}\right)}^{1/(d-1)}. \]
\end{prop}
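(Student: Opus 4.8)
The plan is to re-run the argument of Theorem~\ref{thm:maindivfree}, but carefully track the extra error term produced by the divergence. The key observation is that integrating $\div V$ over $\mathcal{R}^-_T(0) \cap Q_R$ now produces $\int_{\mathcal{R}^-_T(0)\cap Q_R} \div V \,\mathrm{d}x$, whose absolute value is at most $\varepsilon |\mathcal{R}^-_T(0)\cap Q_R| \leq \varepsilon (2R)^d$. So inequality~\eqref{eq:divergence} becomes $|S_R \setminus P_R| \leq 2\|V\|_{L^\infty}(|P_R|+|D_R|) + C\varepsilon R^d$, and correspondingly~\eqref{eq:initial-L-bound} picks up an additive $C\varepsilon R^d$ inside the parentheses. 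For the differential-inequality step to still yield polynomial growth $\int_0^R |L_r|\,\mathrm{d}r \gtrsim (R-1)^{d-1}$ up to scale $R_1$, we need $\varepsilon R_1^d$ to be dominated by $\varepsilon_{\text{flux}}$ (the flux parameter from the incompressible proof, playing the role of the old $\varepsilon$ in~\eqref{eq:epsdef}) times a small constant — i.e. we need a constraint of the shape $\varepsilon R_1^d \leq c\, \varepsilon_{\text{flux}}$. Since we will take $R_1 \sim \big(\varepsilon_{\text{flux}}\,\lambda/(1+\|V\|_{L^\infty})\big)^{1/d}$ as before — except that here $\lambda = C\,\Lip(V)^{3d}(1+\|V\|_{L^\infty})^{5d+4}(\log p^{-1})^{d/(d-1)}$ is the target waiting-time bound — this becomes a lower bound $\varepsilon_{\text{flux}}$ must satisfy and an upper bound on $\varepsilon$; chasing the exponents gives the claimed $\varepsilon \geq C^{-1}\Lip(V)^{-3}(1+\|V\|_{L^\infty})^{-6}(\log p^{-1})^{1/(d-1)}$.

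More concretely, the steps are as follows. First, fix $p > 0$ and set $\lambda := C\,\Lip(V)^{3d}(1+\|V\|_{L^\infty})^{5d+4}(\log p^{-1})^{d/(d-1)}$ with $C$ large, to be determined. Unwinding the failure-probability bound in Theorem~\ref{thm:maindivfree}, one checks that $C\exp\big(-C^{-1}\lambda^{(d-1)/d}\Lip(V)^{3-3d}(1+\|V\|_{L^\infty})^{3-5d}\big) \leq p$ for this choice of $\lambda$ — indeed $\lambda^{(d-1)/d} \sim \Lip(V)^{3(d-1)}(1+\|V\|_{L^\infty})^{(5d+4)(d-1)/d}\log p^{-1}$, and multiplying by $\Lip(V)^{3-3d}(1+\|V\|_{L^\infty})^{3-5d}$ leaves a positive power of $(1+\|V\|_{L^\infty})$ times $\log p^{-1}$, which is $\gtrsim \log p^{-1}$. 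Second, keep the flux parameter $\varepsilon_{\text{flux}} := C_0^{-1}(\Lip(V)(1+\|V\|_{L^\infty}))^{1-d}$ exactly as in~\eqref{eq:epsdef}, and the scales $R_1 := \big(\lambda \varepsilon_{\text{flux}}/(1+\|V\|_{L^\infty})\big)^{1/d}$, $R_0 := R_1/(C_1(1+\|V\|_{L^\infty})^2)$ as before; then the event $E(R_1,R_0,\varepsilon_{\text{flux}})$ holds with probability $\geq 1-p$ (up to adjusting $C$). Third, work in that event, assume for contradiction $B_{1/2}\not\subseteq \mathcal{R}^-_{T+1}(0)$ for the $T \leq C\lambda$ selected via Lemma~\ref{lem:smallP} with $(1+2\|V\|_{L^\infty})|P_R| \leq \varepsilon_{\text{flux}}/4$, and run~\eqref{eq:divergence}--\eqref{eq:initial-L-bound} with the extra $C\varepsilon R^d$ term. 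Fourth, impose $\varepsilon \leq c_0 C_0^{-1} R_1^{-d}(\Lip(V)(1+\|V\|_{L^\infty}))^{1-d}$ for a small dimensional $c_0$ so that $C\varepsilon R_1^d \leq \varepsilon_{\text{flux}}/4$; then $\int_0^R|L_r|\,\mathrm{d}r - (1+2\|V\|_{L^\infty})|P_R| - C\varepsilon R^d \geq \varepsilon_{\text{flux}}/4$ at $R=1$ and the Grönwall-type step~\eqref{eq:polygrowth} goes through verbatim (the constant there being independent of $C_0,C_1$). Fifth, feed this into~\eqref{eq:fluxgrowth} and Lemma~\ref{lem:smallflux} as in the incompressible proof — here too the flux of $D_R$ via the divergence theorem on $Q_R$ picks up only a harmless $\varepsilon R^d \leq \varepsilon_{\text{flux}} R^{d-1}$ term absorbable into the existing $2d\varepsilon_{\text{flux}}R^{d-1}$ — to get $|S_{R_1}| \geq C_1 C^{-1}(1+\|V\|_{L^\infty})R_1^{d-1}$, and choose $C_1$ large to contradict the (divergence-corrected) upper bound $|S_{R_1}\setminus P_{R_1}| \leq 2\|V\|_{L^\infty}(|P_{R_1}|+|D_{R_1}|)+C\varepsilon R_1^d \lesssim (1+\|V\|_{L^\infty})R_1^{d-1}$. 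Finally, translate the constraint $\varepsilon \leq c_0 C_0^{-1} R_1^{-d}(\Lip(V)(1+\|V\|_{L^\infty}))^{1-d}$ into explicit exponents: $R_1^{-d} = (1+\|V\|_{L^\infty})/(\lambda\varepsilon_{\text{flux}}) \sim (1+\|V\|_{L^\infty})^2\,\Lip(V)^{d-1}(1+\|V\|_{L^\infty})^{d-1}\lambda^{-1}$, so $\varepsilon \gtrsim \lambda^{-1}\Lip(V)^{0}(1+\|V\|_{L^\infty})^{2} = C^{-1}\Lip(V)^{-3d}(1+\|V\|_{L^\infty})^{-5d-2}(\log p^{-1})^{-d/(d-1)}$; a slightly more careful bookkeeping — in particular using $(\log p^{-1})^{1/(d-1)}\leq (\log p^{-1})^{d/(d-1)}$ only when $\log p^{-1}\geq 1$, and otherwise the bound is trivial — gives the stated $\varepsilon \geq C^{-1}\Lip(V)^{-3}(1+\|V\|_{L^\infty})^{-6}(\log p^{-1})^{1/(d-1)}$.

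The main obstacle is the exponent bookkeeping: one must verify that the single new constraint $\varepsilon \lesssim R_1^{-d}\cdot(\text{incompressible flux scale})$ is simultaneously (i) weak enough that the advertised lower bound on $\varepsilon$ is an admissible choice and (ii) strong enough that every place the divergence error enters — the divergence identity~\eqref{eq:divergence}, the modified~\eqref{eq:initial-L-bound}, the Grönwall step, and the divergence theorem on $Q_R$ feeding Lemma~\ref{lem:smallflux} — is genuinely absorbed into the pre-existing error budget without changing any of the dimensional constants that were already fixed. Concretely, the delicate point is that $R_1$ itself depends on $\lambda$, which depends on $p$, so the constraint on $\varepsilon$ is really a constraint relating $\varepsilon$, $p$, $\Lip(V)$, and $\|V\|_{L^\infty}$, and one has to make sure the powers of $(1+\|V\|_{L^\infty})$ and $\Lip(V)$ come out with the right sign (negative) in the final lower bound for $\varepsilon$. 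Everything else is a line-by-line rerun of Theorem~\ref{thm:maindivfree} with the substitution ``$\int \div V = 0$'' $\rightsquigarrow$ ``$|\int \div V| \leq \varepsilon(2R)^d$''.
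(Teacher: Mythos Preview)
Your plan is structurally identical to the paper's: rerun the proof of Theorem~\ref{thm:maindivfree}, inserting an extra $C\varepsilon R^d$ into~\eqref{eq:divergence} and~\eqref{eq:isoperimetric} (the only two places the divergence theorem is invoked), and then impose a smallness condition on $\varepsilon$ so that this error is absorbable. That is exactly what the paper does, in two sentences.

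The gap is quantitative, and the ``slightly more careful bookkeeping'' at the end does not close it. You impose $C\varepsilon R_1^d \leq \varepsilon_{\text{flux}}/4$, i.e.\ you require the divergence error to be dominated by the \emph{seed} $\varepsilon_{\text{flux}} \sim (\Lip(V)(1+\|V\|_{L^\infty}))^{1-d}$ uniformly for all $R\leq R_1$. This forces $\varepsilon \lesssim \varepsilon_{\text{flux}} R_1^{-d} = (1+\|V\|_{L^\infty})/\lambda$, and that is precisely what produces your computed exponents $\Lip(V)^{-3d}$, $(1+\|V\|_{L^\infty})^{-5d-2}$, $(\log p^{-1})^{-d/(d-1)}$. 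These are genuinely weaker than the stated ones and cannot be upgraded by arithmetic alone: once you have imposed the constraint $\varepsilon R_1^d \lesssim \varepsilon_{\text{flux}}$, no bookkeeping will relax it. The paper's observation is that one needs only $\varepsilon \leq C^{-1}R_1^{-1}$. Under this much milder condition, $C\varepsilon R^d \leq C^{-1}R^{d-1}$ for every $R\leq R_1$; since after the Gr\"onwall step~\eqref{eq:polygrowth} every inequality in the argument already carries a term of order $R^{d-1}$, an additional $C^{-1}R^{d-1}$ is absorbed into the existing constants. Translating $\varepsilon \leq C^{-1}R_1^{-1}$ through the definitions of $R_1$ and $\lambda$ yields the stated exponents $\Lip(V)^{-3}$, $(1+\|V\|_{L^\infty})^{-6}$, and a $(\log p^{-1})^{-1/(d-1)}$ dependence directly.
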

\begin{proof}
    The proof is nearly identical to the proof of Theorem~\ref{thm:maindivfree}. The only difference is the the addition of $\varepsilon C R^d$ error terms in~\eqref{eq:divergence}
    \begin{equation}\label{eq:divergence-new}
        |S_R \setminus P_R| \leq 2 \int_{P_R \cup D_R} V(x) \cdot \nu(x) \; \mathrm{d}x + \varepsilon C R^d \leq 2\|V\|_{L^\infty}(|P_R| + |D_R|) + \varepsilon C R^d,
    \end{equation}
    and~\eqref{eq:isoperimetric}
    \begin{equation}\label{eq:isoperimetric-new}
        \left|\int_{D_R} V(x) \cdot \nu(x) \; \mathrm{d}x\right| \leq C|L_R|^{\frac{d-1}{d-2}} + \varepsilon C R^d.
    \end{equation}
    As long as $\varepsilon \leq C^{-1}R_1^{-1}$, where $R_1$ is defined in the proof of Theorem~\ref{thm:maindivfree}, the extra error term is at most $C^{-1}R^{d-1}$, and therefore does not affect any of the calculations.
\end{proof}

We conclude the section by showing that, even without assuming incompressibility, the reachable set at time $t$ grows proportionally to $t^d$. This lemma plays a key role in ensuring that homogenization occurs in the sense of uniform convergence, by showing that no traps can arise where the reachable set stays bounded for a long time.

\begin{prop}\label{lem:reachablesetgrows}
    There is some $\varepsilon = \varepsilon(d) > 0$ such that, if $|\div V| \leq \varepsilon$ almost surely, then \[ |\mathcal{R}^-_t(x_0)| \geq \frac{t^d}{2d} \] for every $x_0 \in \R^d$ and $t \geq 0$ almost surely.
\end{prop}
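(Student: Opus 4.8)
The plan is to derive a differential inequality for the volume $v(t) := |\mathcal{R}^-_t(x_0)|$ that forces it to grow at least as fast as $t^d/(2d)$, by showing that the growth rate is bounded below by (a constant times) a suitable power of $v(t)$ itself. Recall from the proof of Lemma~\ref{lem:smallP} that $v$ is Lipschitz with
\[
    \dot v(t) = \int_{\partial \mathcal{R}^-_t(x_0)} {(1 + V(x) \cdot \nu(x))}_+ \; \mathrm{d}x
\]
for almost every $t$, where $\nu$ is the outward unit normal. So it suffices to bound this boundary integral from below by $\bigl(d\, v(t)\bigr)^{(d-1)/d}$; then $v(t) \geq t^d/(2d)$ follows by comparison with the ODE $\dot w = (dw)^{(d-1)/d}$, $w(0) = 0$, whose solution is exactly $t^d/(2d)$ — this is the clean endgame, and by continuity one only needs the differential inequality to hold a.e.

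The core estimate is the lower bound on $\dot v(t)$. First, the trivial bound: the integrand ${(1+V\cdot\nu)}_+$ is at most $1 + \|V\|_{L^\infty}$, so $\dot v(t) \leq (1+\|V\|_{L^\infty}) \mathcal{H}^{d-1}(\partial \mathcal{R}^-_t(x_0))$, which is not directly what we want — we want the integrand to be comparable to $1$ on most of the boundary. The idea is exactly the flux argument from Theorem~\ref{thm:maindivfree}: writing $P_t := \{x \in \partial \mathcal{R}^-_t(x_0) \mid V(x)\cdot\nu(x) \geq -\tfrac12\}$, integrate $\div V$ over $\mathcal{R}^-_t(x_0)$ to get
\[
    \bigl|\partial \mathcal{R}^-_t(x_0) \setminus P_t\bigr| \leq 2\int_{\partial \mathcal{R}^-_t(x_0)} V \cdot \nu \, \mathrm{d}x + \varepsilon C v(t) \leq 2\|V\|_{L^\infty}|P_t| + \varepsilon C v(t),
\]
as in~\eqref{eq:divergence-new}. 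Now on $\partial \mathcal{R}^-_t(x_0)\setminus P_t$ the integrand ${(1+V\cdot\nu)}_+$ may be small, but there the outward normal velocity is genuinely at most $\tfrac12$, which should let us argue that this portion of the boundary does \emph{not} contribute to volume growth — more precisely, the set $\partial\mathcal{R}^-_t\setminus P_t$ is, up to the flux error, small compared to $|P_t|$. Combining with the isoperimetric inequality $\mathcal{H}^{d-1}(\partial\mathcal{R}^-_t(x_0)) \geq d\,\omega_d^{1/d} v(t)^{(d-1)/d} \geq (d\,v(t))^{(d-1)/d}$ (taking the sharp constant, so that at worst we lose a harmless dimensional factor we can absorb), we get $|P_t| \gtrsim v(t)^{(d-1)/d}$ once $\varepsilon$ is small enough that $\varepsilon C v(t)$ is negligible against $\|V\|_{L^\infty}|P_t|$ — but wait, that last reduction needs $v(t)$ itself bounded, which it is not for large $t$.

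The fix, and the main obstacle, is that the $\varepsilon C v(t)$ term scales the same way as $v(t)$, not as $v(t)^{(d-1)/d}$, so one cannot absorb it by shrinking $\varepsilon$ uniformly in $t$. The resolution must be to localize: instead of integrating $\div V$ over all of $\mathcal{R}^-_t(x_0)$, intersect with a cube $Q_R$ of radius $R \sim t$ (or slightly larger), exactly as in Theorem~\ref{thm:maindivfree}, so the divergence error becomes $\varepsilon C R^d \sim \varepsilon C t^d$, while the isoperimetric gain on $\partial\mathcal{R}^-_t \cap Q_R$ is $\sim v(t)^{(d-1)/d} \sim t^{d-1}$; since $\mathcal{R}^-_t(x_0) \subseteq B_{(1+\|V\|_{L^\infty})t}(x_0) \subseteq Q_{Ct}$ the reachable set is contained in the cube so no genuine truncation occurs, and the extra boundary piece $L_R$ on $\partial Q_R$ is empty. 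Then one needs $\varepsilon C t^d \ll t^{d-1}$, i.e.\ $\varepsilon \ll t^{-1}$, which still fails for large $t$ — so the honest argument is a bootstrap: one first proves $|\mathcal{R}^-_t(x_0)| \geq c t^d$ for some small $c = c(d)$ (which does hold, because for $t$ below the threshold $\sim \varepsilon^{-1}$ the error is controlled, and beyond that one re-centers and restarts from an interior point whose reachable set already has positive volume, using that $\mathcal{R}^-$ has the semigroup property $\mathcal{R}^-_{s+t}(x_0) \supseteq \mathcal{R}^-_t(\mathcal{R}^-_s(x_0))$), and then feeds this back in; alternatively, one chooses $\varepsilon = \varepsilon(d)$ small once and for all so that the constant $c$ one extracts equals $1/(2d)$. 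I expect the delicate point to be making the "$\partial\mathcal{R}^-_t\setminus P_t$ doesn't contribute to growth" step rigorous together with the localization, so that the $\varepsilon$ threshold genuinely depends only on $d$ and not on $\|V\|_{C^{0,1}}$ — this is what the statement claims and what distinguishes it from Proposition~\ref{prop:mainsmalldiv}.
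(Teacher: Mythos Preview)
Your setup is right: the proof reduces to showing $\dot v(t) \geq \tfrac12 |\partial \mathcal{R}^-_t(x_0)|$, and the whole difficulty is to bound $\int_{\mathcal{R}^-_t(x_0)} \div V$ by $\varepsilon C\,|\partial \mathcal{R}^-_t(x_0)|$ rather than by $\varepsilon\,|\mathcal{R}^-_t(x_0)|$. You also correctly diagnose that neither the naive bound nor localization to $Q_R$ with $R\sim t$ can do this, and your bootstrap/restart sketch does not close the gap either (restarting from an interior point gives you a lower bound on the \emph{union} of reachable sets, but the differential inequality has to hold for the single set $\mathcal{R}^-_t(x_0)$ at each time).

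What you are missing is that this is a \emph{probabilistic} statement, not a deterministic one: the paper's argument uses the unit range of dependence and $\E[V]=0$ in an essential way. The trick is to replace $\mathcal{R}^-_t(x_0)$ by a discretized set $E$ (the union of unit lattice cubes that are at least half covered), so that the symmetric difference and $|\partial E|$ are both controlled by $C|\partial\mathcal{R}^-_t(x_0)|$ via the relative isoperimetric inequality in each cube. For each fixed such $E$ (there are only countably many), the divergence theorem gives $\int_E \div V = \int_{\partial E} V\cdot\nu$, which is $\mathcal{G}(\partial E)$-measurable; on the other hand $\int_E \div V = \int_D \div V + \int_{E\setminus D}\div V$ where $D=\{x\in E:\dist(x,\partial E)\leq 1\}$ has $|D|\leq C|\partial E|$, and the second integral is $\mathcal{G}(E\setminus D)$-measurable, hence independent of $\mathcal{G}(\partial E)$. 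Taking conditional expectation with respect to $\mathcal{G}(\partial E)$ kills the second integral (since $\E[\div V]=0$) and leaves $\bigl|\int_E\div V\bigr|\leq \varepsilon|D|\leq \varepsilon C|\partial E|$ almost surely. This is exactly the surface-term bound you were looking for, and the constant $C$ is purely combinatorial, which is why $\varepsilon$ depends only on $d$.

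As a minor point, the detour through $P_t$ is unnecessary here: once you have the divergence bound, you simply drop the positive part via $(1+V\cdot\nu)_+\geq 1+V\cdot\nu$ and integrate.
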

\begin{proof}
    Assume for simplicity that $x_0 = 0$ and let $K := t(1+\|V\|_{L^\infty})$. Then $\mathcal{R}^-_t(0) \subseteq Q_K$.

    Since $|\mathcal{R}^-_t(0)| \geq \frac12|B_t|$ for sufficiently small $t \geq 0$, it suffices to show that \[ \partial_t |\mathcal{R}^-_t(0)| = \int_{\partial \mathcal{R}^-_t(0)} {(1 + V(x) \cdot \nu(x))}_+ \; \mathrm{d}x \geq \frac12|\partial \mathcal{R}^-_t(0)|, \] where $\nu$ denotes the outward unit normal to $\mathcal{R}^-_t(0)$.

    By the divergence theorem, \[ \int_{\partial \mathcal{R}^-_t(0)} (1 + V(x) \cdot \nu(x)) \; \mathrm{d}x = |\partial \mathcal{R}^-_t(0)| + \int_{\mathcal{R}^-_t(0)} \div V(x) \; \mathrm{d}x. \]

    To get rid of small parts of the boundary of the reachable set, we define its discretized version by
    \[
        E := \bigcup \left\{x + \overline{Q_{1/2}} \: : \: x \in \Z^d \text{ and } |\mathcal{R}^-_t(0) \cap (x + Q_{1/2})| \geq \frac12\right\}.
    \]
    We will estimate the divergence term by integrating over $E$ instead and using the unit range of dependence. First, we bound the symmetric difference by \[ \left|(E \setminus \mathcal{R}^-_t(0)) \cup (\mathcal{R}^-_t(0) \setminus E)\right| \leq C|\partial \mathcal{R}^-_t(0)|, \] by the isoperimetric inequality applied in each integer-centered unit cube.
    The bound on $\div V$ then implies
    \begin{equation}\label{eq:Eisclose}
        \left|\int_{E} \div V(x) \; \mathrm{d}x - \int_{\mathcal{R}^-_t(0)} \div V(x) \; \mathrm{d}x\right| \leq \varepsilon C |\partial \mathcal{R}^-_t(0)|.
    \end{equation}
    On the other hand, the isoperimetric inequality also yields
    \begin{equation}\label{eq:Eboundarysmall}
        |\partial E| \leq C|\partial \mathcal{R}^-_t(0)|.
    \end{equation}
    We claim that $\int_E \div V(x) \; \mathrm{d}x \leq \varepsilon C |\partial E|$ almost surely. Indeed, if this is true, then since there are only countably many possible values for $E$, the inequality holds for all possible $E$ almost surely. We finish by combining the claim with~\eqref{eq:Eisclose} and~\eqref{eq:Eboundarysmall} to conclude that \[ \int_E \div V(x) \; \mathrm{d}x \leq \varepsilon C |\partial \mathcal{R}^-_t(0), \] so choosing $\varepsilon := \frac12 C^{-1}$ allows us to conclude.

    It remains to prove the claim.
    Let \[ D := \{x \in E \mid \dist(x, \partial E) \leq 1\}. \] Then $|D| \leq C|\partial E|$ (as before, we abuse notation by using $|\cdot|$ to denote the $d$-dimensional measure on the left and $(d-1)$-dimensional measure on the right-hand side), since every integer-centered unit cube in $E$ which intersects $\partial E$ is adjacent to an integer-centered unit cube in $E$ which has at least one of its faces contained in $\partial E$.

    By the divergence theorem, \[ \int_E \div V(x) \; \mathrm{d}x = \int_{\partial E} V(x) \cdot \nu(x) \; \mathrm{d}x, \] where $\nu$ denotes the outward unit normal to $E$. The integral on the right-hand side depends only on $V$ restricted to $\partial E$, and is therefore independent from the random variable \[ \int_{E \setminus D} \div V(x) \; \mathrm{d}x. \] However, we have \[ \int_E \div V(x) \; \mathrm{d}x = \int_D \div V(x) \; \mathrm{d}x + \int_{E \setminus D} \div V(x) \; \mathrm{d}x \leq \varepsilon C |\partial E| + \int_{E \setminus D} \div V(x) \; \mathrm{d}x. \] Taking the conditional expectation with respect to $\mathcal{G}(E \setminus D)$ and using independence yields the claim.
\end{proof}

\section{Global waiting time estimates}
In this section, we use percolation techniques to transform our local waiting time estimates into global bounds.

\subsection{Percolation estimates}
First, we prove a few well-known estimates from supercritical percolation theory, but with a finite range of dependence assumption. We follow the path in~\cite{me}, carefully keeping track of the dependence of constants on the range of dependence with the help of our local waiting time estimates. The proofs are nearly identical to those in~\cite{me}, but we include them here for sake of completion.

Let $d \geq 2$ and let $G \colon \Z^d \to \{0, 1\}$ be a random function on $\Z^d$ which has finite range $C_{\text{dep}} > 0$ of dependence, which means that the $\sigma$-algebras induced by the values of $G$ on sets that are Euclidean distance at least $C_{\text{dep}}$ apart are independent. We assume that $G$ is $\Z^d$-translation invariant, i.e., $G(\cdot+v)$ has the same distribution as $G(\cdot)$ for all $v \in \Z^d$. The function $G$ models site percolation, where a site $x$ is open if $G(x) = 1$ and closed otherwise.

We put edges on $\Z^d$ between nearest neighbors in the $\ell^\infty$ metric. We write $\dist(\cdot, \cdot)$ to indicate the graph distance, and refer to maximal connected components on which $G$ is constant as \textit{clusters}. Clusters composed of open (resp.\ closed) sites are called open (resp.\ closed) clusters. Let $p := \P[G(0) = 1]$ be the probability that a site is open (which is the same for every site, by $\Z^d$-translation invariance). We write $\mathcal{Q}_R(x) \subseteq \Z^d$ to denote the axis-aligned cube of side length $2R$ centered at $x$.
\begin{lem}\label{closed-clusters-small}
    Let $S \subseteq \Z^d$ be a finite set. Define the closed sites connected to $S$ by \[ \cl(S) := \left\{x \in \Z^d \mid \text{there is a path of closed sites from $x$ to a site in $S$}\right\}. \] For any $\varepsilon > 0$ there is $C = C(d) > 0$ such that if \[ p \geq 1-\exp(-C \varepsilon^{-1} C_\text{dep}^d), \] then \[ \P\left[|\cl(S)| > \varepsilon|S| + \delta\right] \leq C\exp(-C^{-1}C_\text{dep}^{-d}\delta). \]
\end{lem}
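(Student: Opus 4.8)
The plan is to run a Peierls-type first-moment bound over ``closed lattice animals'' anchored in $S$, using the finite range of dependence to convert the size of an animal into an exponentially small probability. The combinatorial starting point is that $\cl(S)$ is precisely the disjoint union of those closed clusters which intersect $S$, and there are at most $|S|$ of them since each contains a point of $S$. Hence, writing $m := \varepsilon|S| + \delta$, the event $\{|\cl(S)| \geq m\}$ is contained in the union, over all $k$, all $k$-element subsets $\{s_1,\dots,s_k\}$ of $S$, and all choices of pairwise disjoint connected sets $A_i \ni s_i$ with $\sum_i |A_i| \geq m$, of the event that every site of $\bigcup_i A_i$ is closed. (We may assume $\varepsilon$ is below a dimensional constant; for larger $\varepsilon$ the hypothesis on $p$ does not even keep the closed sites subcritical, but that regime is not needed in the sequel.)

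The next step is to bound the probability of one such event. With $A := \bigsqcup_i A_i$, a greedy selection produces $\{y_1,\dots,y_j\} \subseteq A$ that is pairwise $\ell^\infty$-separated by at least $C_{\text{dep}}$ and has $j \geq |A|/(3C_{\text{dep}})^d$ elements. Since $\{y_i\}$ is at distance $\geq C_{\text{dep}}$ from $\{y_1,\dots,y_{i-1}\}$, conditioning on $\{y_1,\dots,y_{i-1}\text{ closed}\}$ one site at a time and invoking the range-of-dependence hypothesis gives $\P[A\text{ all closed}] \leq (1-p)^{|A|/(3C_{\text{dep}})^d}$; plugging in $1-p \leq \exp(-C\varepsilon^{-1}C_{\text{dep}}^d)$ makes the $C_{\text{dep}}$ cancel, leaving $\P[A\text{ all closed}] \leq \exp(-C\varepsilon^{-1}|A|/3^d)$. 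This uses only pairwise independence of well-separated sets, which is why the sequential conditioning is needed rather than a joint-independence assertion.

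Finally I would assemble the union bound. Using that there are $\binom{|S|}{k}$ anchor sets and at most $b^{\ell}$ connected sets of size $\ell$ through a fixed vertex (a standard lattice-animal bound, $b = b(d)$), and writing $q := b(1-p)^{1/(3C_{\text{dep}})^d} \leq b\exp(-C\varepsilon^{-1}/3^d)$, the bound becomes $\sum_k \binom{|S|}{k}\sum_{\ell_1,\dots,\ell_k\geq 1,\ \sum_i\ell_i\geq m} q^{\sum_i\ell_i}$. Grouping terms by $L := \sum_i\ell_i$ and using Vandermonde's identity ($\sum_k\binom{|S|}{k}\binom{L-1}{k-1} \leq \binom{|S|+L}{|S|} \leq 2^{|S|+L}$) collapses this to $2^{|S|}\sum_{L\geq m}(2q)^L$. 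Once $C = C(d)$ is taken large enough that $q \leq \tfrac14$ (so the geometric series sums to $\leq 2(2q)^m$) and $2(2q)^\varepsilon \leq 1$ (so the factor $2^{|S|}(2q)^{\varepsilon|S|}$ is at most $1$), we are left with $\P[|\cl(S)| \geq m] \leq 2(2q)^\delta$, and since $\varepsilon$ is bounded, $\log(2q) \leq -C^{-1}C_{\text{dep}}^{-d}$ after one more enlargement of $C$, giving the claimed $C\exp(-C^{-1}C_{\text{dep}}^{-d}\delta)$.

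The main obstacle is precisely this last bookkeeping: one must check that the combinatorial entropy of the configurations --- the $2^{|S|}$ produced by the anchor choices together with Vandermonde, and the $b^{|A|}$ from the lattice-animal count --- is beaten by the smallness of $q$, so that the final estimate carries no residual dependence on $|S|$ and displays exactly the factor $C_{\text{dep}}^{-d}$ in the exponent. This is what dictates the precise form $p \geq 1 - \exp(-C\varepsilon^{-1}C_{\text{dep}}^d)$ of the hypothesis, and it is also where the cancellation of $C_{\text{dep}}$ in the single-configuration estimate is essential.
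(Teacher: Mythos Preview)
Your proof is correct and follows essentially the same Peierls argument as the paper: enumerate the possible $n$-vertex unions of closed connected sets anchored in $S$, bound the probability each is all closed using a maximal $C_{\text{dep}}$-separated subset, and take a union bound. The only cosmetic difference is in the combinatorics---you count configurations cluster-by-cluster via a lattice-animal bound and then collapse the anchor/composition sum with Vandermonde, whereas the paper encodes the entire set $T$ at once by a spanning-forest walk to get the single entropy factor $(2d+2)^{|S|+2n}$; both routes yield the same inequalities on $1-p$ and the same conclusion.
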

Note in particular that $x \in \cl(S)$ implies that $x$ is a closed site.
\begin{proof}
    Let $T \subseteq \Z^d$ be a finite set of $n$ vertices. If $T \subseteq \cl(S)$, then every site in $T$ is closed and every cluster in $T$ contains a point in $S$. For fixed $n$, the number of sets $T$ which satisfy the latter condition is at most ${(2d+2)}^{|S|+2n}$ (we can encode a spanning tree of $T$ with an alphabet of $2d+2$ letters). For a fixed set $T$, we see that \[ \P[\text{every site in $T$ is closed}] \leq {(1-p)}^{\left\lfloor n/{(2C_\text{dep})}^d\right\rfloor}, \] since we can choose at least $\left\lfloor n/{(2C_\text{dep})}^d\right\rfloor$ sites in $T$ which are far enough to be independent. From the union bound, we have \[\P[\text{there is a set $T \subseteq \cl(S)$ with $n$ vertices}] \leq {(2d+2)}^{|S|+2n}{(1-p)}^{\left\lfloor n/{(2C_\text{dep})}^d\right\rfloor}. \] Now let $n := \left\lceil\varepsilon|S| + \delta\right\rceil$ and choose $(1-p)$ small enough so that \[{(1-p)}^{\varepsilon/{(2C_\text{dep})}^d}{(2d+2)}^{1+2\varepsilon} < 1\] and \[{(1-p)}^{1/{(2C_\text{dep})}^d}{(2d+2)}^2 < 1\] and the claim follows.
\end{proof}
The next lemma has nothing to do with the percolation environment; it's simply a property of the graph structure of $\Z^d$. It follows from a topological property of $\R^d$ known as unicoherence (see Kuratowski~\cite{Kuratowski} or Dugundji~\cite{Dugundji}). In order to state the lemma, we need to define the boundary of a subset of $E \subseteq \Z^d$. Because $\Z^d$ is discrete, there are two choices for our definition.
\begin{defn}
    The inner (resp.\ outer) boundary of $E$, denoted $\partial^-E$ (resp. $\partial^+E$), is the set \[ \partial^-E := \{x \in E \mid \dist(x, \Z^d \setminus E) = 1\} \qquad \text{(resp.\ $\partial^+E := \{x \in \Z^d \setminus E \mid \dist(x, E) = 1\}$)}. \]
\end{defn}

\begin{lem}\label{unicoherence}
    Let $\mathcal{Q}_R$ be any cube of side length $2R$ and let $\mathfrak{C} \subseteq \mathcal{Q}_R$ be a connected set. Let $\mathfrak{D} \subseteq \mathcal{Q}_R \setminus \mathfrak{C}$ be a connected component of $\mathcal{Q}_R \setminus \mathfrak{C}$. Then the inner (resp.\ outer) boundary of $\mathfrak{D}$ is connected.
\end{lem}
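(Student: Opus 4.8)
The plan is to transfer the statement to the continuum, where connectedness of the boundary of a region is governed by unicoherence of $\R^d$.

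First I would \emph{solidify} the discrete data. To a finite set $E \subseteq \Z^d$ associate a compact set $\hat E \subseteq \R^d$ obtained by placing a closed unit cube at each point of $E$ (adjoining, to be safe under the $\ell^\infty$ metric, a small solid bridge across any corner at which two of these cubes meet only at a vertex). The $\ell^\infty$ nearest-neighbour structure on $\Z^d$ is precisely the one for which $E$ is graph-connected if and only if $\hat E$ is connected, and for which $\partial^- E$ (resp.\ $\partial^+ E$) is graph-connected if and only if the inner (resp.\ outer) collar of $\hat E$ is connected; moreover each collar is a union of closed unit cubes each of which meets $\partial\hat E$, so each collar is connected as soon as $\partial\hat E$ is. Applying this with $E = \mathfrak D$, and folding the lattice points of $\Z^d \setminus \mathcal Q_R$ into $\mathfrak C$ so that the ambient space becomes all of $\R^d$, the lemma reduces to the following continuum statement: if $K \subseteq \R^d$ is closed and connected and $U$ is a connected component of $\R^d \setminus K$, then $\partial U$ is connected. (Here $K$ is the solidification of $\mathfrak C$ together with $\R^d \setminus \mathcal Q_R$, and $U$ is the component corresponding to $\mathfrak D$.)

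To prove the continuum statement I would invoke (Kuratowski, Dugundji) that $\R^d$ is unicoherent: if $\R^d = A \cup B$ with $A$, $B$ closed and connected, then $A \cap B$ is connected. Apply this with $A := \overline U$ and $B := \R^d \setminus U$. Both are closed, $A \cup B = \R^d$, and, since $U$ is open (a component of the open set $\R^d \setminus K$), $A \cap B = \overline U \setminus U = \partial U$; so it suffices to check $A$ and $B$ are connected. $A = \overline U$ is connected because $U$ is. For $B$, decompose $\R^d \setminus K = U \sqcup U_2 \sqcup U_3 \sqcup \cdots$ into connected components, so that $B = K \cup U_2 \cup U_3 \cup \cdots$. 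Since $K$ is a nonempty proper closed subset of the connected space $\R^d$, each $\overline{U_i}$ is connected and meets $K$ (its topological boundary is a nonempty subset of $K$); hence $B = K \cup \bigcup_{i \geq 2}\overline{U_i}$ is a union of connected sets each meeting the connected set $K$, so $B$ is connected. Unicoherence then gives that $\partial U$ is connected.

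The substance of the argument is entirely in the first step: the solidification must be set up carefully enough that $\ell^\infty$-graph-adjacency corresponds exactly to topological contact — the delicate case being two cubes that touch only at a corner — and the bounding cube $\mathcal Q_R$ must be accounted for, since without folding $\Z^d \setminus \mathcal Q_R$ into $\mathfrak C$ the component $U$ could be unbounded, or $\partial U$ could acquire a spurious extra piece running along $\partial\mathcal Q_R$. Once that dictionary is in place, the unicoherence step is short.
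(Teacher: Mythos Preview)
Your approach---solidify, then invoke unicoherence of $\R^d$---is exactly the route the paper has in mind: the remark preceding the lemma names unicoherence explicitly, and the proof itself is just a citation of Deuschel--Pisztora. The continuum step (write $\R^d = \overline U \cup (\R^d \setminus U)$, verify both pieces are closed and connected, apply unicoherence to conclude $\partial U$ is connected) is correct and cleanly argued.

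The gap is in the folding. You set $K$ equal to the solidification of $\mathfrak C$ together with the exterior of the cube, and your continuum argument needs $K$ connected. You recognise that the ambient cube must be handled, but the fix you propose does not work in general: nothing in the hypotheses forces $\mathfrak C$ to meet $\partial^- \mathcal Q_R$, and if it does not then $K$ has two components and unicoherence does not apply. In fact the lemma as literally stated is false: with $d=2$, $\mathcal Q_R = \{-5,\dots,5\}^2$, $\mathfrak C = \{0\}$, and $\mathfrak D = \mathcal Q_R \setminus \{0\}$, the inner boundary $\partial^- \mathfrak D$ is the disjoint union of the eight $\ell^\infty$-neighbours of the origin and the outermost layer of $\mathcal Q_R$. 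Your argument breaks at precisely the point where the statement does. In every place the paper actually \emph{uses} the lemma (Lemma~\ref{open-cluster-big} and Theorem~\ref{cont-estimate}), $\mathfrak C$ is a spanning open cluster that reaches the faces of the ambient cube, so the folded $K$ is genuinely connected and your argument goes through; the Deuschel--Pisztora formulation carries an equivalent hypothesis. You should record that hypothesis explicitly before running the unicoherence step.
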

\begin{proof}
    This is part (i) of Lemma 2.1 from Deuschel\textendash{}Pisztora~\cite{DeusPisz}. The proof is a standard application of Urysohn's lemma.
\end{proof}

The next lemma shows that, with high probability, there is a large open cluster which is near every site.
\begin{lem}\label{open-cluster-big}
    Let $n, R > 0$ and consider $\mathcal{Q}_R$, a cube of side length $2R$. Let $E_n$ be the event that there exists an open cluster $\mathfrak{C} \subseteq \mathcal{Q}_{R+n}$ such that every connected component of $\mathcal{Q}_{R+n} \setminus \mathfrak{C}$ which intersects $\mathcal{Q}_R$ is of size at most $n$. Then there is a constant $C = C(d) > 0$ such that if \[ p \geq 1-\exp(-C \varepsilon^{-1} C_\text{dep}^d), \] then \[\P[E_n] \geq 1-CR^d\exp(-C^{-1}C_\text{dep}^{-d}n^{(d-1)/d}).\]
\end{lem}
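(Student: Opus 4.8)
The plan is to build the desired open cluster $\mathfrak{C}$ from the ``crossing clusters'' of a grid of overlapping subcubes of sidelength comparable to $n^{1/d}$, exactly in the spirit of the standard renormalization argument in supercritical percolation. First I would tile $\mathcal{Q}_{R+n}$ by axis-aligned subcubes $\mathcal{Q}^{(i)}$ of sidelength $\ell := c\, n^{1/d}$ for a small dimensional constant $c$, together with their dilates by a factor of, say, $3$. Call a subcube \emph{good} if its open sites contain a cluster that crosses the dilated subcube in every coordinate direction and, moreover, this crossing cluster is unique; otherwise call it \emph{bad}. The key point, which I would extract from Lemma~\ref{closed-clusters-small} (applied with $S$ a suitable seed set inside the subcube and $\delta \sim \ell^d$), is that when $p$ is close enough to $1$ — precisely $p \geq 1-\exp(-C\varepsilon^{-1}C_\text{dep}^d)$ with $\varepsilon$ a small dimensional constant — a single subcube is bad with probability at most $\exp(-C^{-1}C_\text{dep}^{-d}\ell^{d-1}) = \exp(-C^{-1}C_\text{dep}^{-d}n^{(d-1)/d})$. (The closed-cluster bound controls the complement of the giant open cluster inside the subcube, which is exactly the obstruction to having a unique spanning open cluster.)

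Next I would observe that crossing clusters of two overlapping good subcubes must coincide, because their intersection is itself crossed and uniqueness forces the two crossings to merge; hence the union $\mathfrak{C}$ of the crossing clusters over all good subcubes is a single open cluster, provided the good subcubes form a connected sub-lattice — which they do as long as no bad subcube disconnects the grid. Then I would argue that any connected component $\mathfrak{D}$ of $\mathcal{Q}_{R+n}\setminus\mathfrak{C}$ meeting $\mathcal{Q}_R$ is small: if $\mathfrak{D}$ were large it would have to contain, or be surrounded by, a whole subcube's worth of sites none of which lie in $\mathfrak{C}$, forcing that subcube to be bad; more carefully, using Lemma~\ref{unicoherence} the inner boundary $\partial^-\mathfrak{D}$ is connected, so if $\mathfrak{D}$ had diameter exceeding $C\ell$ this connected boundary would have to ``go around'' a good subcube and thereby separate its crossing cluster from $\mathfrak{C}$, a contradiction. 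Calibrating the constants so that $C\ell^d \leq n$ gives the stated bound $|\mathfrak{D}|\leq n$.

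Finally I would take the union bound over all subcubes in the grid — there are at most $CR^d/\ell^d \leq CR^d$ of them — to conclude that with probability at least $1-CR^d\exp(-C^{-1}C_\text{dep}^{-d}n^{(d-1)/d})$ every subcube is good, and on this event the cluster $\mathfrak{C}$ constructed above witnesses the event $E_n$. The main obstacle is the geometric/topological step: turning ``all subcubes are good'' into the quantitative statement that every component of the complement that reaches $\mathcal{Q}_R$ has size at most $n$. This is where unicoherence (Lemma~\ref{unicoherence}) does the real work, ruling out thin tendrils of the complement that could otherwise wind through the grid; one has to be careful that a component touching $\mathcal{Q}_R$ but living in the collar $\mathcal{Q}_{R+n}\setminus\mathcal{Q}_R$ cannot escape to the outer boundary without crossing a good subcube. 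Everything else — the choice of $\ell$, the application of Lemma~\ref{closed-clusters-small}, and the union bound — is routine once the renormalization scheme is set up.
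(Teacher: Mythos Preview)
Your block-renormalization scheme is the textbook supercritical-percolation route and can be made to work, but the paper avoids it entirely in favour of a much shorter direct argument. Rather than tiling at scale $n^{1/d}$ and declaring subcubes good or bad, the paper simply works in the single global event that every closed cluster in $\mathcal{Q}_{R+n}$ has size below $C^{-1}n^{(d-1)/d}$, which follows from Lemma~\ref{closed-clusters-small} applied once per site (with $\varepsilon=1$ and $\delta\sim n^{(d-1)/d}$) together with a union bound over the $O(R^d)$ sites. On that event one takes $\mathfrak{C}$ to be the largest open cluster; since no small closed cluster can separate opposite faces, $\mathfrak{C}$ crosses $\mathcal{Q}_{R+n}$ and hence $|\mathfrak{C}|>n$. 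For any component $\mathfrak{D}$ of $\mathcal{Q}_{R+n}\setminus\mathfrak{C}$ meeting $\mathcal{Q}_R$, unicoherence (Lemma~\ref{unicoherence}) makes $\partial^-\mathfrak{D}$ connected; the width-$n$ collar prevents $\partial^-\mathfrak{D}$ from reaching $\partial\mathcal{Q}_{R+n}$ (that would force a connected closed set of size exceeding $n^{(d-1)/d}$), so every site of $\partial^-\mathfrak{D}$ borders $\mathfrak{C}$ and is therefore closed. Thus $\partial^-\mathfrak{D}$ sits inside a single closed cluster of size below $C^{-1}n^{(d-1)/d}$, and the isoperimetric inequality combined with $|\mathfrak{C}|>n$ forces $|\mathfrak{D}|\leq n$. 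Both proofs rest on the same three tools --- the closed-cluster bound, unicoherence, isoperimetry --- but the paper applies them once at the global scale, so the ``main obstacle'' you flag (turning goodness of all subcubes into a size bound on complement components) simply never arises. Two small slips in your sketch: to hit the exponent $n^{(d-1)/d}$ you want $\delta\sim\ell^{d-1}$, not $\ell^d$, in Lemma~\ref{closed-clusters-small}; and your notion of ``good'' (unique crossing open cluster) is by itself too weak to rule out long thin closed filaments threading many subcubes --- a Pisztora-style scheme would also impose a diameter bound on the complement of the crossing cluster inside each block.
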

\begin{proof}
    Fix $n, R > 0$ as in the statement. Work in the event that every closed cluster in $\mathcal{Q}_{R+n}$ has size less than $C^{-1}n^{(d-1)/d}$, where $C = C(d)$ is related to the isoperimetric constant (it will be chosen by the end of the proof). By Lemma~\ref{closed-clusters-small} (applied to each site in $\mathcal{Q}_{R+n}$ individually, with (say) $\varepsilon = 1$), this event has probability at least $1-CR^d\exp(-C^{-1} C_\text{dep}^{-d}n^{(d-1)/d})$.

    Let $\mathfrak{C}$ be the largest open cluster (breaking ties arbitrarily) in $\mathcal{Q}_{R+n}$. As long as $C \geq 1$, it follows from the isoperimetric inequality that there is an open path between opposite faces of $\mathcal{Q}_{R+n}$, so $|\mathfrak{C}| \geq 2(R+n)+1 > n$.

    Let $\mathfrak{D}$ be any connected component of $\mathcal{Q}_{R+n} \setminus \mathfrak{C}$ which intersects $\mathcal{Q}_R$. The inner boundary of $\mathfrak{D}$ is composed of two kinds of sites: (i) those bordering $\mathfrak{C}$ and (ii) those in the inner boundary of $\mathcal{Q}_{R+n}$. The sites of type (i) are all closed (by definition of $\mathfrak{C}$). We claim that there are no sites of type (ii). Indeed, if there was a site of type (ii) then we could follow the inner boundary of $\mathfrak{D}$ (it's connected by Lemma~\ref{unicoherence}) from the inner boundary of $\mathcal{Q}_{R+n}$ to a site in $\mathcal{Q}_R$, which would yield a path of length more than $n \geq n^{(d-1)/d}$ of type (i) (and hence closed) sites, contradicting our assumption.

    Since the inner boundary of $\mathfrak{D}$ is connected and composed entirely of closed sites, it has size less than $C^{-1}n^{(d-1)/d}$. The isoperimetric inequality then shows that either $\mathfrak{D}$ or $\mathcal{Q}_{R+n} \setminus \mathfrak{D}$ has size at most $n$. Since $\mathfrak{C} \subseteq \mathcal{Q}_{R+n} \setminus \mathfrak{D}$ and $|\mathfrak{C}| > n$, it follows that $|\mathfrak{D}| \leq n$ as desired.
\end{proof}

\subsection{A global bound}
Next, we show that the region where the waiting time is small contains a supercritical percolation cluster.

\begin{lem}\label{it-is-percolation}
    There is a constant $C = C(d) > 0$ such that for each $0 < p < 1$, if \[ |\div V| \leq C^{-1}{(\|V\|_{C^{0,1}}+1)}^{-C} {\left(\log {(1-p)}^{-1}\right)}^{-1/(d-1)} \] almost surely, then the function $G \colon \Z^d \to \{0, 1\}$, defined by \[ G(v) = \begin{cases} 1 & \quad \theta(x, y) \leq C{(\|V\|_{C^{0,1}}+1)}^{C} \log {(1-p)}^{-1} \; \text{for all $x, y \in B_{\sqrt{d}}(v)$}\\ 0 & \quad \text{otherwise,} \end{cases} \] is $Z^d$-translation invariant with finite range of dependence \[ C_\text{dep} \leq C {(\|V\|_{C^{0,1}}+1)}^{C} \log {(1-p)}^{-1} \] and $\P[G(0) = 1] \geq p$.
\end{lem}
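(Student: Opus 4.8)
The plan is to establish the three assertions of the lemma: $\Z^d$-translation invariance, the range-of-dependence bound, and $\P[G(0)=1]\ge p$; the first two are short, and the third is the real content. For translation invariance, observe that controlled paths, reachable sets, and hence $\theta$ are all equivariant under translations of the environment, so a unit translation of $V$ translates $G$, whence $G$ inherits the $\Z^d$-translation invariance of $V$. For the range of dependence, write $T:=C(\|V\|_{C^{0,1}}+1)^C\log(1-p)^{-1}$ for the threshold in the definition of $G$ and put $\rho:=T(1+\|V\|_{L^\infty})+\sqrt d$. Since $|\dot X_x^\alpha|\le 1+\|V\|_{L^\infty}$, any controlled path started in $B_{\sqrt d}(v)$ and run for time at most $T$ stays in $\overline{B_\rho(v)}$, so for fixed $x,y\in B_{\sqrt d}(v)$ the event $\{\theta(x,y)\le T\}=\{y\in\mathcal R^-_T(x)\}$ is $\mathcal G(\overline{B_\rho(v)})$-measurable; as this is a closed condition in $(x,y)$, a countable intersection recovers $G(v)$ itself as $\mathcal G(\overline{B_\rho(v)})$-measurable. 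Thus the restriction of $G$ to any $S\subseteq\Z^d$ is measurable with respect to $\mathcal G\big(\bigcup_{v\in S}\overline{B_\rho(v)}\big)$, and for $S,S'$ at Euclidean distance at least $2\rho+1$ these regions lie at distance at least $1$, so the unit range of dependence of $V$ gives independence; hence one may take $C_\text{dep}\le 2\rho+1$, which is of the claimed form after enlarging $C$.

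For the probability bound, the idea is to reduce to the waiting-time estimate of Proposition~\ref{prop:mainsmalldiv} using subadditivity of $\theta$, a time-reversal trick, and a short chaining argument. Fix a grid $\Lambda:=M^{-1}\Z^d\cap B_{2\sqrt d}(0)$ with $M=M(d)$ large enough that $\Lambda$ is a $\tfrac14$-net of $B_{\sqrt d}(0)$ and has all nearest-neighbour spacings at most $\tfrac14$; then $|\Lambda|\le C_d$. For $w\in\Lambda$ set $W^+(w):=\inf\{t:\mathcal R^-_t(w)\supseteq B_{1/2}(w)\}$ and $W^-(w):=\inf\{t:\mathcal R^+_{-t}(w)\supseteq B_{1/2}(w)\}$, and let $\mathcal E$ be the event that $W^{\pm}(w)\le W_{\max}:=T/K_d$ for all $w\in\Lambda$, where $K_d=K_d(d)$ will bound the chain length below. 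On $\mathcal E$, given $x,y\in B_{\sqrt d}(0)$, sample the segment $[x,y]$ finely and snap to $\Lambda$ to get a chain $x,w_0,w_1,\dots,w_m,y$ in $\Lambda$ with $m+2\le K_d$ and consecutive points within $\tfrac12$. Then $w_{j+1}\in B_{1/2}(w_j)\subseteq\mathcal R^-_{W_{\max}}(w_j)$ gives $\theta(w_j,w_{j+1})\le W_{\max}$; $y\in B_{1/2}(w_m)\subseteq\mathcal R^-_{W_{\max}}(w_m)$ gives $\theta(w_m,y)\le W_{\max}$; and $x\in B_{1/2}(w_0)\subseteq\mathcal R^+_{-W_{\max}}(w_0)$ means $x=X^\alpha_{w_0}(s)$ for some $s\in[-W_{\max},0]$, i.e.\ $w_0$ is reachable from $x$ in time $-s\le W_{\max}$, so $\theta(x,w_0)\le W_{\max}$. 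By subadditivity $\theta(x,y)\le(m+2)W_{\max}\le T$, so $\mathcal E\subseteq\{G(0)=1\}$ and $\P[G(0)=0]\le\sum_{w\in\Lambda}\big(\P[W^+(w)>W_{\max}]+\P[W^-(w)>W_{\max}]\big)$.

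It remains to bound each of these $2|\Lambda|$ terms. Writing $V_w:=V(\cdot+w)$, the reachable set $\mathcal R^-_t(w)$ for $V$ equals $w$ plus the reachable set $\mathcal R^-_t(0)$ for $V_w$, so $\{W^+(w)>W_{\max}\}$ is exactly the event that the $B_{1/2}$-waiting time at $0$ for $V_w$ exceeds $W_{\max}$; and $V_w$ inherits from $V$ every hypothesis of Proposition~\ref{prop:mainsmalldiv} — the same Lipschitz and $L^\infty$ norms, unit range of dependence, zero mean, $\Z^d$-translation invariance, and $|\div V_w|=|\div V|$ — so the proposition applies with its $d$-dependent constant unchanged. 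For $W^-(w)$, reversing time (replace $\alpha(\cdot)$ by $-\alpha(t-\cdot)$ and $X(\cdot)$ by $X(t-\cdot)$) turns a $V$-controlled path from $a$ to $b$ into a $(-V)$-controlled path from $b$ to $a$, so the backward reachable set $\mathcal R^+_{-t}(w)$ for $V$ equals the forward reachable set $\mathcal R^-_t(w)$ for $-V$; hence $\{W^-(w)>W_{\max}\}$ is a $B_{1/2}$-waiting-time event at $0$ for $-V_w$, which again satisfies the hypotheses of the proposition. Applying Proposition~\ref{prop:mainsmalldiv} with probability parameter $(1-p)/(2|\Lambda|)$ to each term yields $\P[G(0)=0]\le 1-p$, provided $W_{\max}$ exceeds the proposition's threshold $C\Lip(V)^{3d}(1+\|V\|_{L^\infty})^{5d+4}\big(\log\tfrac{2|\Lambda|}{1-p}\big)^{d/(d-1)}$ and $|\div V|$ lies below its requirement $C^{-1}\Lip(V)^{-3}(1+\|V\|_{L^\infty})^{-6}\big(\log\tfrac{2|\Lambda|}{1-p}\big)^{1/(d-1)}$; both follow, for $C=C(d)$ large, from $\Lip(V)^{3d}(1+\|V\|_{L^\infty})^{5d+4}\le(\|V\|_{C^{0,1}}+1)^{8d+4}$, the form of $T$ and $W_{\max}$, and the (strictly stronger) divergence hypothesis of the lemma.

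I expect the main obstacle to be precisely this last bookkeeping step: one must reconcile the bound asserted here, which is linear in $\log(1-p)^{-1}$, with the $(d-1)/d$-stretched-exponential tail actually delivered by Proposition~\ref{prop:mainsmalldiv}, and track how the threshold exponents interact with the divergence hypothesis, being careful about the range of $p$ for which the chosen constant works. The time-reversal identity and the chaining argument, although essential to the reduction, are routine once the setup above is in place.
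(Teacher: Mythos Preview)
Your proof is correct and is precisely the paper's approach with the details filled in: the paper's own proof is four sentences, invoking Proposition~\ref{prop:mainsmalldiv} for $\P[G(0)=1]\ge p$, translation invariance of the law for $\Z^d$-invariance of $G$, and the speed limit $1+\|V\|_{L^\infty}$ for the range of dependence, leaving the grid chaining and the time-reversal for the starting endpoint to the reader. Your closing concern about the exponent is on target---Proposition~\ref{prop:mainsmalldiv} yields a threshold of order $(\log(1-p)^{-1})^{d/(d-1)}$ rather than linear---and this is a slip in the paper's statement rather than a defect in your argument; it is harmless downstream since Theorem~\ref{cont-estimate} only invokes the lemma at a single $p$ determined by $\|V\|_{C^{0,1}}$.
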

In other words, $G$ is an environment in which all of our percolation estimates apply.
\begin{proof}
    By Proposition~\ref{prop:mainsmalldiv}, there is $C > 0$ such that $\P[G(0) = 1] \geq p$. The $\Z^d$-translation invariance of $G$ follows from that of $\P$. Finite range of dependence follows from the fact that the value of $G(v)$ depends only on controlled paths starting in $B_{\sqrt{d}}(v)$ which run for time at most $C \|V\|_{C^{0,1}}^C \log {(1-p)}^{-1}$. The bound on $C_\text{dep}$ follows, noting that the top speed of a path is $1+\|V\|_{L^\infty}$.
\end{proof}

\begin{defn}
    To translate between $\Z^d$ and $\R^d$, for each set $E \subseteq \Z^d$ we introduce the ``solidification'' \[\sigma(E) := E+{\left[-\frac12, \frac12\right]}^d. \]
\end{defn}
We are now in a position to prove our global controllability estimate. We refer to clusters as before, using the same notion of adjacency.
\begin{thm}\label{cont-estimate}
    There is a constant $C = C(d) > 0$ such that, if \[ |\div V| \leq C^{-1}{(\|V\|_{C^{0,1}}+1)}^{-C}, \] then for each $R \geq 1$, the ``extra waiting time'' \[ \mathcal{E}(R) := \sup_{x, y \in Q_{R}} \frac{\theta(x, y) - C(1 + |x-y|)}{C{(\|V\|_{C^{0,1}}+1)}^{C}} \] satisfies \[ \P[\mathcal{E}(R) > n] \leq CR^d\exp(-C^{-1} {(\|V\|_{C^{0,1}}+1)}^{-C}n). \]
\end{thm}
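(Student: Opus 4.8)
The strategy is to upgrade the local waiting-time estimate — which Lemma~\ref{it-is-percolation} repackages, via Proposition~\ref{prop:mainsmalldiv}, as a supercritical site-percolation environment $G$ — into a global bound using the cluster geometry of Lemma~\ref{open-cluster-big}, with Proposition~\ref{lem:reachablesetgrows} serving to bridge an arbitrary point to the good cluster. First I would fix $p=p(\|V\|_{C^{0,1}},d)<1$ with $1-p$ a small negative exponential in a fixed power of $\|V\|_{C^{0,1}}+1$, chosen so that (i) the divergence hypothesis of the present theorem implies the divergence hypothesis of Lemma~\ref{it-is-percolation} for this $p$, and (ii) $p=\P[G(0)=1]$ exceeds the threshold required by Lemmas~\ref{closed-clusters-small} and~\ref{open-cluster-big} for the value $C_{\mathrm{dep}}\le C(\|V\|_{C^{0,1}}+1)^{C}$ supplied by Lemma~\ref{it-is-percolation}. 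Both $C_{\mathrm{dep}}$ and the good-cube waiting time $T_1:=C(\|V\|_{C^{0,1}}+1)^{C}\log(1-p)^{-1}$ are then bounded by a power of $\|V\|_{C^{0,1}}+1$, and the statement's baseline $C(1+|x-y|)$ should be read with this factor included.

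Next I would apply Lemma~\ref{open-cluster-big} to a cube of radius $\asymp (R+n^{d/(d-1)})(1+\|V\|_{L^\infty})$ with its free parameter set to $n^{d/(d-1)}$, so that the resulting exponent is $(n^{d/(d-1)})^{(d-1)/d}=n$; after absorbing the polynomial prefactor into the dimensional constant this gives an event $\Omega_n$ of probability at least $1-CR^{d}\exp(-C^{-1}(\|V\|_{C^{0,1}}+1)^{-C}n)$ on which there is an open cluster $\mathfrak C$ whose complement has, throughout a large neighborhood of $Q_R$, only components of at most $n^{d/(d-1)}$ sites. Since distinct complement components are $\ell^\infty$-separated, their solidifications are pairwise disjoint with positive gaps, so any connected subset of $\R^d$ that stays in this neighborhood and misses $\sigma(\mathfrak C)$ lies inside a single component-solidification and therefore has volume at most $n^{d/(d-1)}$.

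On $\Omega_n$, fix $x,y\in Q_R$. By Proposition~\ref{lem:reachablesetgrows}, $|\mathcal R^-_t(x)|\ge t^d/(2d)$, so as soon as $t>(2d)^{1/d}n^{1/(d-1)}$ the connected set $\mathcal R^-_t(x)$ is too voluminous to fit in one component-solidification and hence must reach $\sigma(\mathfrak C)$: it contains a point $z_x$ in the solidification of some $w_x\in\mathfrak C$, whence $\theta(x,z_x)\le Cn^{1/(d-1)}$. Running the same argument on the environment $-V$ — which has the same Lipschitz norm, the same $L^\infty$ norm and the same divergence bound — produces $z_y$ in the solidification of some $w_y\in\mathfrak C$ with $\theta(z_y,y)\le Cn^{1/(d-1)}$. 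It remains to traverse the cluster: along a geodesic of $\mathfrak C$ from $w_x$ to $w_y$ the $B_{\sqrt d}$-balls of consecutive sites overlap, so chaining the implication ``$G\equiv 1\Rightarrow\theta\le T_1$ on $B_{\sqrt d}$'' together with the subadditivity of $\theta$ gives $\theta(z_x,z_y)\le T_1\bigl(1+d_{\mathfrak C}(w_x,w_y)\bigr)$; a supercritical chemical-distance bound $d_{\mathfrak C}(w_x,w_y)\le C|x-y|+C(\|V\|_{C^{0,1}}+1)^{C}n^{1/(d-1)}$ (obtainable from Lemmas~\ref{closed-clusters-small} and~\ref{unicoherence} after harmlessly enlarging $\Omega_n$) plus the triangle inequality yields, on $\Omega_n$,
\[
\theta(x,y)\le C(\|V\|_{C^{0,1}}+1)^{C}\bigl(1+|x-y|+n\bigr)\qquad\text{for all }x,y\in Q_R,
\]
so $\mathcal E(R)\le n$ on $\Omega_n$, which is exactly the asserted bound.

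The real work is in the bookkeeping, not in any one inequality. The parameter $p$ of the first step must be threaded through three competing demands — the divergence constraint, polynomiality of $C_{\mathrm{dep}}$ and $T_1$ in $\|V\|_{C^{0,1}}+1$, and supercriticality (which itself sits at a $\|V\|$-dependent threshold) — and the parameter handed to Lemma~\ref{open-cluster-big} must be the superlinear $n^{d/(d-1)}$ precisely in order to convert its $\tfrac{d-1}{d}$ exponent into the linear rate claimed here; one also has to check that the geometric enlargements needed to keep the reachable-set excursions and the cluster geodesics inside the controlled cube cost only powers of $\|V\|_{C^{0,1}}+1$ that are swallowed by the prefactor. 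The single input beyond a routine pass of chaining is the chemical-distance estimate, which is where the ``controllability on average'' coming from $\E[V]=0$ enters; if one insisted on the baseline carrying only a dimensional constant rather than a power of $\|V\|_{C^{0,1}}+1$, this chaining step would have to be replaced by the subadditive/renormalization argument of~\cite{me}.
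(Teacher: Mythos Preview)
Your approach matches the paper's: set up the site-percolation environment via Lemma~\ref{it-is-percolation}, use Lemma~\ref{open-cluster-big} together with Proposition~\ref{lem:reachablesetgrows} to push $x$ forward and $y$ backward onto the giant open cluster, and then traverse the cluster by a skeleton that follows the segment $\overline{xy}$ and detours around closed clusters using Lemmas~\ref{closed-clusters-small} and~\ref{unicoherence}. Two inessential discrepancies: the paper feeds $n^d$ rather than $n^{d/(d-1)}$ to Lemma~\ref{open-cluster-big} (either choice works, since the binding probability comes from Lemma~\ref{closed-clusters-small}), and your chemical-distance bound should read $C|x-y|+Cn$, not $C|x-y|+Cn^{1/(d-1)}$ --- Lemma~\ref{closed-clusters-small} applied to the sites along $\overline{xy}$ with $\delta=n$ yields only the former within the $\exp(-cn)$ budget, which is in any case what your final display uses.
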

\begin{proof}
    We partition $\R^d$ into cubes of side length $1$, centered at points in $\Z^d$. Fix $p := 1-\exp(-C C_\text{dep}^d)$, where $C_\text{dep}$ is given by Lemma~\ref{it-is-percolation}. By Lemma~\ref{it-is-percolation}, if \[ |\div V| \leq C^{-1}{(\|V\|_{C^{0,1}}+1)}^{-C}, \] then $\P[G(v) = 1] \geq p$. For $v \in \Z^d$, we say that the site $v$ is open if $G(v) = 1$ and closed otherwise. We say that a point $x \in \R^d$ lies near an open site if there is some $v \in \Z^d$ such that $x \in \sigma(\{v\})$. Let $S = \lceil R \rceil$ and $x, y \in Q_{S}$. For convenience, we will prove that $\P[\mathcal{E}(R) > Cn] \leq CR^d\exp(-C^{-1}{(\|V\|_{C^{0,1}}+1)}^{-C}n)$; this easily implies the original claim by changing $C$ to $C^2$.

    \textit{Step 1.} We show that we can assume without loss of generality that $x$ and $y$ lie in the same open cluster, by which we mean that there is an open cluster $\mathfrak{C}$ such that $x, y \in \sigma(\mathfrak{C})$. Indeed, suppose they lie in different open clusters. Then by Lemma~\ref{open-cluster-big}, we have an open cluster $\mathfrak{C} \subseteq \mathcal{Q}_{S+n^d}$, which depends on the environment, such that \[ \P\left[\text{every connected $\mathfrak{D} \subseteq \mathcal{Q}_{S+n^d} \setminus \mathfrak{C}$ which intersects $\mathcal{Q}_S$ satisfies $|\mathfrak{D}| \leq n^d$}\right] \geq 1-CS^d\exp(-C^{-1}C_\text{dep}^{-d}n^{d-1}).\] Working in this event, let $\mathfrak{D}$ be the connected component of $\mathcal{Q}_{S+n^d} \setminus \mathfrak{C}$ whose solidification $\sigma(\mathfrak{D})$ contains $x$. Then Lemma~\ref{lem:reachablesetgrows}, along with the fact that $\mathcal{R}_t^-(x)$ is connected, shows that $\mathcal{R}_t^-(x) \setminus \sigma(\mathfrak{D}) \neq \emptyset$ if $|t|^d \geq 2d n^d$ and $t > 0$. Since the controlled paths are continuous, there is some $t \leq n {(2d)}^{1/d} + 1$ such that there is some $z \in \mathcal{R}_t^-(x) \cap \sigma(\mathfrak{C})$. Since the loss in probability and travel time can be controlled by enlarging $C$, we may as well assume that $x$ was $z$ to begin with. Repeating the same argument for $y$ (except running time backwards, so $t < 0$ and we use $\mathcal{R}_t^+(y)$ instead) shows that we may assume that $x$ and $y$ lie in the same open cluster, $\mathfrak{C}$.

    \textit{Step 2.} To show that \[ \theta(x, y) - C|x-y| \leq C{(\|V\|_{C^{0,1}}+1)}^{C}n, \] we will build a ``skeleton'' of points $x = x_0, x_1, \dots, x_k = y$ which all lie near open sites and satisfy $|x_{i+1}-x_i| \leq \sqrt{d}$ for each $0 \leq i < k$. Then, by connecting the points with paths given by Lemma~\ref{it-is-percolation}, we can build a controlled path of length at most $C{(\|V\|_{C^{0,1}}+1)}^{C}k$ which follows the skeleton. It remains to show that we can build such a skeleton with $k \leq C(|x-y| + n)$. Let \[A := \{v \in \Z^d \mid \sigma(\{v\}) \cap \overline{xy} \neq \emptyset\}\] be the set of centers of cubes which intersect the line segment connecting $x$ and $y$. Note that $|A| \leq 2^d(1+|x-y|)$. By Lemma~\ref{closed-clusters-small}, we can choose $\varepsilon = 1$ to work in the event that $\cl(A) \leq |A| + n$.

    Our strategy is to go from $x$ to $y$ in a straight line, taking necessary detours around closed clusters. We use Lemma~\ref{closed-clusters-small} to bound the total length of our detour.

    \begin{figure}
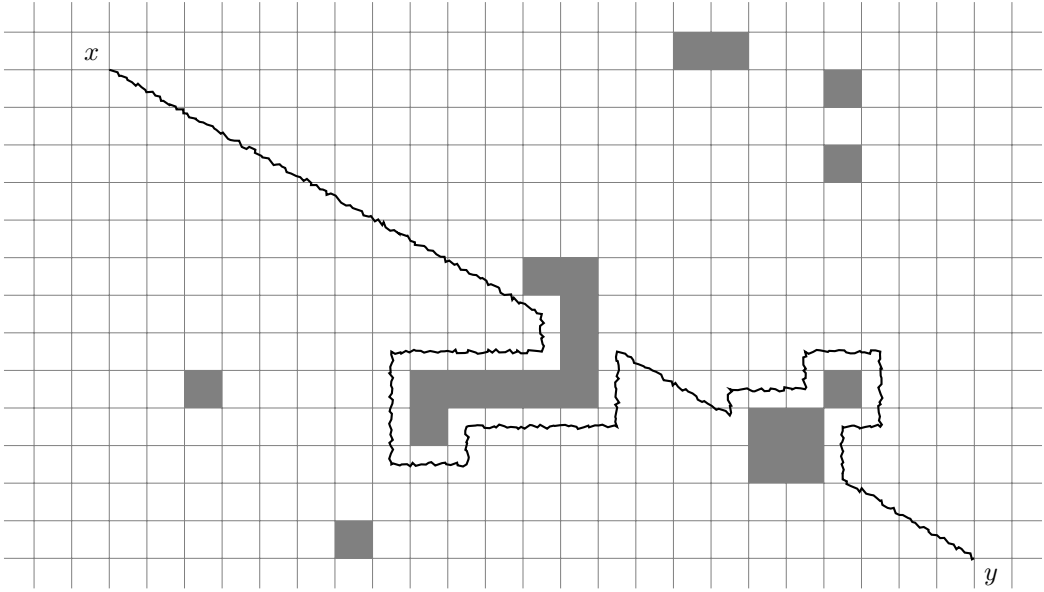

        \centering
        \includestandalone{skeletonfigure}
        \caption{An example of our controlled path from $x$ to $y$; cubes corresponding to closed sites are shaded}
    \end{figure}

    We can build the skeleton iteratively. Start with $x_0 = x$, and assume we've built the skeleton up to $x_i$, for $i \geq 0$. We maintain the invariant that, at the end of each step, $x_i$ lies on the line segment $\overline{xy}$, and it is closer to $y$ than any other $x_j$ which lies on $\overline{xy}$ with $0 \leq j < i$. There are three cases.
    \begin{enumerate}
        \item If $|y-x_i| \leq \sqrt{d}$, define $x_{i+1} := y$ and finish.
        \item If $z := x_i + \sqrt{d}\frac{y-x}{|y-x|}$ lies near an open site, define $x_{i+1} := z$ and continue to the next step.
        \item Otherwise, let $\tilde{x}$ be the point on $\overline{x_i z}$ which lies near an open site and is closest to $z$. Then $\tilde{x}$ also lies near a closed site, which is part of some connected component $\mathfrak{F}$ of $\mathcal{Q}_S \setminus \mathfrak{C}$. By Lemma~\ref{unicoherence}, the outer boundary of $\mathfrak{F}$ is connected. Since $\tilde{x} \in \overline{xy}$, we see that $\partial^- \mathfrak{F} \subseteq \cl(A)$. Besides, since every vertex in $\Z^d$ has degree $3^d-1$, we have $|\partial^+ \mathfrak{F}| \leq 3^d |\partial^- \mathfrak{F}|$. So, let $p_1, \dots, p_\ell \in \Z^d$ be a path along the outer boundary of $\mathfrak{F}$, where $\sigma(\{p_1\}) \ni \tilde{x}$ and $p_\ell \in A$ is a point on the outer boundary of $\mathfrak{F}$ which maximizes $p_\ell \cdot (y-x)$. Finally, we extend our path by setting $x_{i+1} := \tilde{x}$ and $x_{i+j+1} := p_j$ for each $1 \leq j \leq \ell$, and set $x_{i+\ell+2}$ to be a point on the line segment $\overline{xy}$ which lies in $\sigma(\{p_\ell\})$.
    \end{enumerate}

    It remains to analyze the length of this skeleton by looking at each of the three cases. The first case happens at most once, so it can be ignored. The second case reduces the distance $|x_i-y|$ by $\sqrt{d}$ and the third case does not increase the distance $|x_i-y|$, so there can be at most $\frac{|x-y|}{\sqrt{d}}$ points in the skeleton coming from the second case. The third case adds $\ell+2$ points, where $\ell \leq |\partial^+\mathfrak{F}|$. Since we finish an instance of the third case at a point as close to $y$ as possible on the segment $\overline{xy}$, we never witness the same cluster $\mathfrak{F}$ twice in different instances of the third case. Therefore the third case adds at most \[ C|\cl(A)| \leq C(|A| + n) \leq C(|x-y| + n) \] points to our skeleton.
\end{proof}

\section{Random fluctuations in first-passage time}
Next, we consider how much $\theta(0, y)$ deviates from its expectation. Our proof will follow roughly the same path as the proof of Proposition 4.1 from Armstrong\textendash{}Cardaliaguet\textendash{}Souganidis~\cite{ArmsCardSoug}, with some modifications which are made possible by the controllability estimate. As in the previous section, the proofs are nearly identical to those in~\cite{me}, with a bit of care taken to keep track of the dependence of the constants on $V$.

To get started, we introduce a ``guaranteed'' version of first passage time. For any $\rho > 0$, define the $\rho$-guaranteed reachable set recursively by \[ \mathcal{R}_t^\rho(x) := \begin{cases} \mathcal{R}_t^-(x) & \quad \text{if $t < \rho$}\\ \mathcal{R}_\rho^-(\mathcal{R}_{t-\rho}^\rho(x)) \cup (\mathcal{R}_{t-\rho}^\rho(x) + \overline{B_1}) & \quad \text{otherwise.} \end{cases} \] The $\rho$-guaranteed reachable set is similar to the reachable set, except that we enforce expansion at a rate of at least $1/\rho$ in a certain discrete sense. We similarly define the $\rho$-guaranteed first passage time \[ \theta^\rho(x, y) = \min \{t \geq 0 \mid y \in \mathcal{R}_t^\rho(x) \}. \]

    Note that the $\rho$-guaranteed first passage time coincides with the usual first passage time if we have sufficient control on the extra waiting time $\mathcal{E}$ (from Theorem~\ref{cont-estimate}) in a suitable domain.

    Fix some $y \in \R^d$ and define the random variable ${\{Z^\rho_t\}}_{t \geq 0}$ by \[ Z^\rho_t := \E\left[\theta^\rho(0, y) \mid \mathcal{F}_t\right], \] where $\mathcal{F}_t$ is the $\sigma$-algebra generated by the environment $V(x)$ restricted to the $\rho$-guaranteed reachable set $\mathcal{R}_t^\rho(0)$. In other words, $\mathcal{F}_t$ is the smallest $\sigma$-algebra so that the functions $V(x)\mathds{1}_{x \in \mathcal{R}_t^\rho(0)}$ are $\mathcal{F}_t$-measurable for every $x \in \R^d$. Since $\mathcal{R}_t^\rho(0)$ are increasing sets, ${\{\mathcal{F}_t\}}_{t \geq 0}$ is a filtration, so ${\{Z^\rho_t\}}_{t \geq 0}$ is a martingale.

    We first show that $Z^\rho_t$ depends mostly on the shape of $\mathcal{R}_t^\rho(0)$, without regard for the values of $V$ inside $\mathcal{R}_t^\rho(0)$. In order to condition on the approximate shape of the reachable set, for any $E \subseteq \R^d$ we introduce the discretization \[ \disc(E) := \{z \in d^{-1/2}\Z^d \mid B(z, 1) \cap E \neq \emptyset\}. \]
    \begin{lem}\label{fluc-helper-1}
        For any $t \geq 0$, we have \[ \left|\max(Z^\rho_t, t) - f(t, \disc(\mathcal{R}_t^\rho(0)))\right| \leq 3\rho, \] where we define $f(t, S)$, for any $t \geq 0$ and any finite set $S \subseteq d^{-1/2}\Z^d$, by \[ f(t, S) := t + \E\left[\theta^\rho(S, y)\right]. \]
    \end{lem}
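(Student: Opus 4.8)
The plan is to decompose $\theta^\rho(0,y)$ as the deterministic time $t$ needed to ``have reached'' the set $\mathcal R_t^\rho(0)$, plus the cost of travelling from that set to $y$, and then to show that the conditional expectation of the second cost given $\mathcal F_t$ is, up to $O(\rho)$, a deterministic function of the discretized shape $\disc(\mathcal R_t^\rho(0))$. The key structural input is the semigroup identity $\mathcal R_{s+s'}^\rho(x)=\mathcal R_{s'}^\rho(\mathcal R_s^\rho(x))$ for $s,s'$ integer multiples of $\rho$, obtained by unrolling the recursive definition of $\mathcal R^\rho$, together with the crude sandwich $\mathcal R_{\rho\lfloor t/\rho\rfloor}^\rho(0)\subseteq\mathcal R_t^\rho(0)\subseteq\mathcal R_{\rho\lceil t/\rho\rceil}^\rho(0)$. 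Using these, and that $\theta^\rho(S,y)=0$ when $y\in S$ while $\theta^\rho(0,y)\ge t$ exactly when $y\notin\mathcal R_t^\rho(0)$, I would first prove
\[
  \max\bigl(\theta^\rho(0,y),\,t\bigr)=t+\theta^\rho(\mathcal R_t^\rho(0),y)+O(\rho)
\]
for every $t\ge 0$: when $y\in\mathcal R_t^\rho(0)$ both sides equal $t$; otherwise, rounding $t$ to a multiple of $\rho$ and applying the semigroup identity converts a near-optimal witness of $\theta^\rho(0,y)$ into one from $\mathcal R_t^\rho(0)$ to $y$, and conversely, at a cost of $O(\rho)$.

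Since $\mathcal R_t^\rho(0)$ is determined by $V$ restricted to it, both $\mathcal R_t^\rho(0)$ and the event $\{y\in\mathcal R_t^\rho(0)\}$ are $\mathcal F_t$-measurable. Conditioning the last display on $\mathcal F_t$, and noting that on $\{y\in\mathcal R_t^\rho(0)\}$ both $\max(\theta^\rho(0,y),t)$ and $\max(Z_t^\rho,t)$ equal $t$, while on the complement the former equals $\theta^\rho(0,y)$ and the latter $Z_t^\rho$, yields the exact identity $\max(Z_t^\rho,t)=\E[\max(\theta^\rho(0,y),t)\mid\mathcal F_t]$, hence
\[
  \max(Z_t^\rho,t)=t+\E\bigl[\theta^\rho(\mathcal R_t^\rho(0),y)\mid\mathcal F_t\bigr]+O(\rho).
\]
Next I would replace $\mathcal R_t^\rho(0)$ by $\disc(\mathcal R_t^\rho(0))$: since the nearest point of $d^{-1/2}\Z^d$ to any point lies within $\tfrac12$ and every point of $\disc(E)$ lies within $1$ of $E$, we have $E\subseteq\disc(E)+\overline{B_1}$ and $\disc(E)\subseteq E+\overline{B_1}$, so a single $\rho$-step of the ball-expansion in $\mathcal R^\rho$ interpolates between the two starting sets and $\bigl|\theta^\rho(\mathcal R_t^\rho(0),y)-\theta^\rho(\disc(\mathcal R_t^\rho(0)),y)\bigr|\le\rho$.

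It remains to pass from the conditional to the unconditional expectation, i.e.\ to establish the crucial claim
\[
  \E\bigl[\theta^\rho(\disc(\mathcal R_t^\rho(0)),y)\mid\mathcal F_t\bigr]=\E\bigl[\theta^\rho(S,y)\bigr]\big|_{S=\disc(\mathcal R_t^\rho(0))}+O(\rho),
\]
after which the pieces combine to give $\max(Z_t^\rho,t)=t+\E[\theta^\rho(S,y)]\big|_{S=\disc(\mathcal R_t^\rho(0))}+O(\rho)=f(t,\disc(\mathcal R_t^\rho(0)))+O(\rho)$, the three $O(\rho)$ errors — from rounding $t$ to a multiple of $\rho$, from the discretization, and from the conditional-to-unconditional passage — each bounded by $\rho$ after a careful accounting. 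The idea behind the claim is that $\disc(\mathcal R_t^\rho(0))+\overline{B_1}\supseteq\mathcal R_t^\rho(0)$, so after a single $\rho$-step the reachable set grown from $\disc(\mathcal R_t^\rho(0))$ already contains $\mathcal R_t^\rho(0)$ and from then on only expands outward; one argues that the path-expansion steps determining $\theta^\rho(\disc(\mathcal R_t^\rho(0)),y)$ may be taken to avoid the interior of $\mathcal R_t^\rho(0)$, so that, modulo an $O(\rho)$ correction absorbing the first step, $\theta^\rho(\disc(\mathcal R_t^\rho(0)),y)$ is a function of $\disc(\mathcal R_t^\rho(0))$ and of $V$ restricted to $\R^d\setminus\mathcal R_t^\rho(0)$, and hence has the asserted conditional expectation given $\mathcal F_t$.

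The main obstacle is precisely this last claim: showing that the second-stage first-passage time does not feel the values of $V$ inside the already-reached region $\mathcal R_t^\rho(0)$. This forces one to control the controlled paths witnessing $\theta^\rho$ in the thin shell around $\partial\mathcal R_t^\rho(0)$ during the initial $\rho$-step, and to show that their effect can be rerouted through $\R^d\setminus\mathcal R_t^\rho(0)$ or absorbed at $O(\rho)$ cost; here the discretization to the $d^{-1/2}\Z^d$ lattice and the precise definition of $\mathcal F_t$ are what make the bookkeeping close with only an $O(\rho)$ loss. The two earlier steps, by contrast, are routine manipulations of the recursive definition of $\mathcal R^\rho$ and of conditional expectations.
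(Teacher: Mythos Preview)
Your overall decomposition and the first two steps (the semigroup/rounding identity giving $\max(\theta^\rho(0,y),t)=t+\theta^\rho(\mathcal{R}_t^\rho(0),y)+O(\rho)$, and the discretization replacement at cost $\rho$) are correct and match the paper's approach. The genuine gap is in your third step, exactly where you flag the obstacle. You assert that, modulo $O(\rho)$, $\theta^\rho(\disc(\mathcal{R}_t^\rho(0)),y)$ is a function of $\disc(\mathcal{R}_t^\rho(0))$ and of $V$ restricted to $\R^d\setminus\mathcal{R}_t^\rho(0)$, and that this yields the conditional-to-unconditional passage. But even granting that optimal paths can be rerouted to avoid the interior of $\mathcal{R}_t^\rho(0)$, the conclusion does not follow: $\mathcal{F}_t$ is generated by $V|_{\mathcal{R}_t^\rho(0)}$, and the hypothesis is \emph{unit} range of dependence, so independence holds only between sets at distance $\geq 1$. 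The sets $\mathcal{R}_t^\rho(0)$ and $\R^d\setminus\mathcal{R}_t^\rho(0)$ are at distance $0$; values of $V$ just outside the boundary are correlated with values just inside, so $\E[\theta^\rho(\disc(\mathcal{R}_t^\rho(0)),y)\mid\mathcal{F}_t]$ need not equal the unconditional expectation evaluated at $S=\disc(\mathcal{R}_t^\rho(0))$.

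The paper fixes this with a buffer. On the $\mathcal{F}_t$-measurable event $\{\disc(\mathcal{R}_t^\rho(0))=S\}$ it sets $E:=S+B(0,2)$, so that $\mathcal{R}_t^\rho(0)\subseteq E$ and $\dist(\mathcal{R}_t^\rho(0),\partial E)\geq 1$, and decomposes through $\partial E$ rather than through $S$: one has $\theta^\rho(0,\partial E)+\theta^\rho(\partial E,y)-\rho\leq\theta^\rho(0,y)\leq\theta^\rho(0,\partial E)+\theta^\rho(\partial E,y)$, with $t\leq\theta^\rho(0,\partial E)\leq t+3\rho$. The point is that $\theta^\rho(\partial E,y)$ is $\mathcal{G}(\R^d\setminus E)$-measurable (any path from $\partial E$ to $y\notin E$ can be short-circuited at its last exit from $E$), and now the unit range of dependence does give independence from $\mathcal{F}_t$, so taking conditional expectation replaces $\theta^\rho(\partial E,y)$ by its unconditional mean. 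A final comparison $0\leq\theta^\rho(S,y)-\theta^\rho(\partial E,y)\leq 2\rho$ returns to $f(t,S)$. Your argument becomes correct once this buffer is inserted; without it, the independence step fails.
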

    \begin{proof}
        Fix some $t \geq 0$. Using the speed limit $1+\|V\|_{L^\infty}$ for controlled paths, we see that \[ \mathcal{R}_t^\rho(0) \subseteq B(0, 1+\lceil \|V\|_{L^\infty} + 1 + \rho^{-1} \rceil t) \] almost surely. Define the set of possible discretized reachable sets at time $t$ by
        \begin{equation}\label{C_t-definition}
            C_t := \left\{S \subseteq d^{-1/2}\Z^d \cap B\left(0, \lceil \|V\|_{L^\infty} + 1 + \rho^{-1} \rceil t\right)\right\}.
        \end{equation}
        For any $S \in C_t$, the event that $\disc(\mathcal{R}^\rho_t(0)) = S$ is $\mathcal{F}_t$-measurable, so we have \[ \left|\max\left(Z^\rho_t, t\right) - f\left(t, \disc(\mathcal{R}_t^\rho(0))\right)\right| = \sum_{S \in C_t} \max\left(\left|\E\left[\left(\theta^\rho(0, y)-f(t, S)\right)\mathds{1}_{\disc(\mathcal{R}^\rho_t(0)) = S} \mid \mathcal{F}_t\right]\right|, \left|t-f(t, S)\right|\right). \]
        Fix some $S \in C_t$. If $B(y, 2) \cap S \neq \emptyset$, then $y \in \mathcal{R}^\rho_{2\rho}(S)$, so $\theta^\rho(0, y) \leq t+3\rho$ and $t \leq f(t, S) \leq t+3\rho$ and the conclusion holds.

        Otherwise, define the set $E := S + B(0, 2)$. Note that $\mathcal{R}^\rho_t(0) \subseteq E$ and $\dist(\mathcal{R}_t^\rho(0), \partial E) \geq 1$. Using the definition of the $\rho$-guaranteed reachable set,
        \begin{equation}\label{theta-p-inequality}
            \theta^\rho(0, \partial E) + \theta^\rho(\partial E, y) - \rho \leq \theta^\rho(0, y) \leq \theta^\rho(0, \partial E) + \theta^\rho(\partial E, y).
        \end{equation}
        Using the definitions of $S$ and $E$, \[ t \leq \theta^\rho(0, \partial E) \leq t + 3\rho. \] On the other and, the term $\theta^\rho(\partial E, y)$ is $\mathcal{G}(\R^d \setminus E)$-measurable. Taking the conditional expectation of~(\ref{theta-p-inequality}), we have \[ t - \rho + \E[\theta^\rho(\partial E, y)] \leq Z^\rho_t \leq t + 3\rho + \E[\theta(\partial E, y)]. \] To finish, we use the definition of the $\rho$-guaranteed reachable set to find that \[ 0 \leq \theta^\rho(S, y) - \theta^\rho(\partial E, y) \leq 2\rho. \] Combining the previous two displays yields the conclusion of the lemma.
    \end{proof}

    Next, we show that our approximation for $Z_t^\rho$, given by $f(t, \disc(\mathcal{R}_t^\rho(0)))$, has bounded increments.
    \begin{lem}\label{fluc-helper-2}
        Let $t, s \geq 0$. Then \[ \left|f(t, \disc(\mathcal{R}_t^\rho(0))) - f(s, \disc(\mathcal{R}_s^\rho(0)))\right| \leq 2\rho + |t-s|(\|V\|_{L^\infty}\rho + \rho + 2). \]
    \end{lem}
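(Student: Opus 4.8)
Assume (by symmetry) $s \le t$, and write $S_u := \disc(\mathcal{R}_u^\rho(0))$, so that $f(t,S_t) - f(s,S_s) = (t-s) + \E[\theta^\rho(S_t,y) - \theta^\rho(S_s,y)]$; the plan is to bound $\theta^\rho(S_t,y) - \theta^\rho(S_s,y)$ above and below pointwise. The upper bound is immediate: the recursion makes $u \mapsto \mathcal{R}_u^\rho(0)$ nondecreasing, so $S_s \subseteq S_t$, and enlarging the starting set cannot increase the $\rho$-guaranteed first passage time, giving $\theta^\rho(S_t,y) \le \theta^\rho(S_s,y)$ and hence $f(t,S_t) - f(s,S_s) \le t-s$.

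For the lower bound I would work with the ``one round'' map $\Phi(A) := \mathcal{R}_\rho^-(A) \cup (A + \overline{B_1})$, for which the recursion reads $\mathcal{R}_{u+m\rho}^\rho(0) = \Phi^m(\mathcal{R}_u^\rho(0))$. Two elementary facts: since a controlled path of duration $\le\rho$ displaces a point by at most $(1+\|V\|_{L^\infty})\rho$, one has $\Phi(A) \subseteq A + \overline{B_\delta}$ with $\delta := \max(1,(1+\|V\|_{L^\infty})\rho)$; and since $\Phi(A) \supseteq A + \overline{B_1}$, one has $\Phi^N(A) \supseteq A + \overline{B_N}$. Taking $m := \lceil(t-s)/\rho\rceil$ so that $s+m\rho \ge t$, monotonicity and the first fact give $\mathcal{R}_t^\rho(0) \subseteq \mathcal{R}_{s+m\rho}^\rho(0) = \Phi^m(\mathcal{R}_s^\rho(0)) \subseteq \mathcal{R}_s^\rho(0) + \overline{B_{m\delta}}$; passing to discretizations (which costs only an additive constant) yields $S_t \subseteq S_s + \overline{B_N}$ with $N := \lceil m\delta + 2\rceil$, and then the second fact gives $S_t \subseteq \Phi^N(S_s)$.

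To turn this spatial bound into a bound on $\theta^\rho$, I would use that for any set $D$ and $\sigma \ge 0$, unwinding the recursion and using $\mathcal{R}_r^-(D) \subseteq \Phi(D)$ for $r<\rho$ gives $\mathcal{R}_\sigma^\rho(D) \subseteq \Phi^{\lfloor\sigma/\rho\rfloor+1}(D)$; applying this with $\sigma := \theta^\rho(S_t,y)$ and composing powers of $\Phi$, $y \in \mathcal{R}_\sigma^\rho(S_t) \subseteq \mathcal{R}_\sigma^\rho(\Phi^N(S_s)) \subseteq \Phi^{\lfloor\sigma/\rho\rfloor+1+N}(S_s) = \mathcal{R}_{(\lfloor\sigma/\rho\rfloor+1+N)\rho}^\rho(S_s)$, so $\theta^\rho(S_s,y) \le \theta^\rho(S_t,y) + (N+1)\rho$ pointwise. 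Taking expectations and combining with the upper bound, $|f(t,S_t)-f(s,S_s)| \le \max\big(t-s,\ (N+1)\rho-(t-s)\big)$; since $m\delta \le ((t-s)/\rho+1)\delta$ and $\delta \le \|V\|_{L^\infty}\rho+\rho+1$, the right-hand side is of the form $|t-s|(\|V\|_{L^\infty}\rho+\rho+1)$ plus an additive $O(\rho\delta)$ term, and a slightly sharper version of the inflation step in the previous paragraph (or restricting attention to $t,s$ differing by a multiple of $\rho$, which kills the fractional ``round'') trims this to the stated $2\rho + |t-s|(\|V\|_{L^\infty}\rho+\rho+2)$.

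The only genuinely delicate point is the comparison of $\mathcal{R}_t^\rho(0)$ with $\mathcal{R}_s^\rho(0)$. One is tempted to compare them ``within a single round'' using the Lipschitz dependence of a controlled path on its starting point, but that dependence degrades exponentially in the number of rounds and would destroy the estimate; the point of routing the argument through whole-$\rho$ applications of $\Phi$ — first to inflate $\mathcal{R}_s^\rho(0)$ until it swallows $\mathcal{R}_t^\rho(0)$, then to convert distance into time via unit jumps — is precisely that only the dilation-free inclusion $\Phi(A)\subseteq A+\overline{B_\delta}$ is ever invoked. The price is the additive $\rho$-scale terms, and pinning those down to the stated constants is the remaining bookkeeping.
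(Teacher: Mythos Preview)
Your approach is essentially the paper's: both prove the upper bound from monotonicity $S_s\subseteq S_t$, then for the lower bound control the Hausdorff distance between $S_s$ and $S_t$ via the speed of the $\rho$-guaranteed dynamics and convert distance to time using the unit-ball inflation $A+\overline{B_N}\subseteq \mathcal{R}_{N\rho}^\rho(A)$. The paper phrases this as ``the speed limit shows $\dist_H(A_s,A_t)\le 2+|t-s|\lceil 1+\|V\|_{L^\infty}+\rho^{-1}\rceil$, hence $\theta^\rho(A_s,y)\le\theta^\rho(A_t,y)+\rho\cdot\dist_H$'' and sums over the (countably many) possible shapes $A_s,A_t$; your $\Phi$-iteration is just a more explicit packaging of the same two facts.

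The only substantive difference is in the bookkeeping you flag at the end: rounding $|t-s|$ up to $m=\lceil(t-s)/\rho\rceil$ full rounds and using $\Phi(A)\subseteq A+\overline{B_\delta}$ gives $\dist_H\le m\delta$, which for $|t-s|$ small compared to $\rho$ overestimates by a whole $\delta$ and produces an additive $O(\rho\delta)=O\big(\rho^2(1+\|V\|_{L^\infty})\big)$ term rather than the stated $2\rho$. The ``slightly sharper inflation step'' you allude to is exactly the paper's per-unit-time Hausdorff bound quoted above (growth rate $\lceil 1+\|V\|_{L^\infty}+\rho^{-1}\rceil$ instead of $\delta$ per $\rho$-round), which avoids that overcount; once you use it, your argument and the paper's coincide line for line.
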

    \begin{proof}
        Without loss of generality, assume $s < t$. Let $C_s$ and $C_t$ be as defined in~(\ref{C_t-definition}). We need to prove that \[ \sum_{A_s \in C_s} \sum_{A_t \in C_t} \left|f(t, A_t) - f(s, A_s)\right|\mathds{1}_{\disc(\mathcal{R}_s^\rho(0)) = A_s}\mathds{1}_{\disc(\mathcal{R}_t^\rho(0)) = A_t} \leq 2\rho + |t-s|(\|V\|_{L^\infty}\rho + \rho + 2). \] So, fix any $A_s \in C_s$ and $A_t \in C_t$ such that \[ \P\left[\disc(\mathcal{R}_s^\rho(0)) = A_s \text{ and } \disc(\mathcal{R}_t^\rho(0)) = A_t\right] > 0.\] The speed limit for controlled paths shows that $A_s \subseteq A_t$ and \[ \dist_H(A_s, A_t) \leq 2 + |t-s|\lceil 1+\|V\|_{L^\infty}+\rho^{-1}\rceil, \] where $\dist_H$ denotes the Hausdorff distance. Using the definition of the $\rho$-guaranteed reachable set, this yields \[ \theta^\rho(A_t, y) \leq \theta^\rho(A_s, y) \leq \theta^\rho(A_t, y) + \rho\left(2+|t-s|\lceil 1+\|V\|_{L^\infty}+\rho^{-1}\rceil\right). \] The conclusion of the lemma follows from the definition of $f$.
    \end{proof}

    Now we put these lemmas together and apply Azuma's inequality to ${\{Z^\rho_t\}}_{t \geq 0}$, choosing $\rho$ carefully to balance competing error terms.
    \begin{prop}\label{prop:random-fluc}
        There is a constant $C = C(d) > 0$ such that, if $y_1, y_2 \in \R^d$, \[ \lambda \geq C{(\|V\|_{C^{0,1}}+1)}^{C}|y_1-y_2|^{1/2}\log^2|y_1-y_2|, \] and \[ |\div V| \leq C^{-1}{(\|V\|_{C^{0,1}}+1)}^{-C}, \] then \[ \P[|\theta(y_1, y_2) - \E[\theta(y_1, y_2)]| > \lambda] \leq C\exp\left(\frac{-C^{-1} {(\|V\|_{C^{0,1}}+1)}^{-C}\lambda^{1/2}}{|y_1-y_2|^{1/4}}\right). \]
    \end{prop}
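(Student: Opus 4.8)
The plan is to replace $\theta(y_1, y_2)$ by the $\rho$-guaranteed passage time $\theta^\rho$, which coincides with it on a high-probability event once $\rho$ dominates the relevant local waiting time (Theorem~\ref{cont-estimate}); to prove concentration of $\theta^\rho(y_1, y_2)$ about its mean by applying Azuma's inequality to the martingale $\{Z^\rho_t\}_{t \ge 0}$, whose increments are controlled by Lemmas~\ref{fluc-helper-1} and~\ref{fluc-helper-2}; and to choose $\rho$ as a function of $\lambda$ and $N := |y_1 - y_2|$ so as to balance the two resulting error terms. The whole argument applies verbatim with $y_1$ in place of the origin, so for brevity we take $y_1 = 0$; we may also assume $N$ exceeds a dimensional constant, since for bounded $N$ both $\theta(0, y_2)$ and its mean are bounded in terms of $\|V\|_{C^{0,1}}$ with exponential tails by Theorem~\ref{cont-estimate}, making the claim immediate.

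For the Azuma step, fix the time step $\delta := (1 + \|V\|_{L^\infty})^{-1}$ and let $\tau := \theta^\rho(0, y_2)$; this is an $\{\mathcal F_t\}$-stopping time, since $\{\tau \le t\} = \{y_2 \in \mathcal R^\rho_t(0)\}$, and $\tau \le \rho(N+1)$ almost surely by iterating the unit-ball jumps in the recursion defining $\mathcal R^\rho$. I run Azuma on the stopped discrete martingale $W_k := Z^\rho_{k\delta \wedge \tau}$ for $0 \le k \le M := \lceil \rho(N+1)/\delta \rceil$, prepended by $W_{-1} := \E[\theta^\rho(0,y_2)]$ (taking $\mathcal F_{-1}$ trivial). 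Since $Z^\rho_{k\delta\wedge\tau} \ge k\delta\wedge\tau$, one has $W_k = \max(Z^\rho_{k\delta\wedge\tau}, k\delta\wedge\tau)$, so Lemma~\ref{fluc-helper-1} places $W_k$ within $3\rho$ of $f(k\delta\wedge\tau, \disc(\mathcal R^\rho_{k\delta\wedge\tau}(0)))$, and Lemma~\ref{fluc-helper-2} (applied at two times differing by at most $\delta$) bounds the increment of this last quantity by $2\rho + \delta((1 + \|V\|_{L^\infty})\rho + 2) \le 3\rho + 2$; altogether $|W_{k+1} - W_k| \le 10\rho$ deterministically, and the initial step $|W_0 - W_{-1}|$ is bounded the same way. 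Since $W_M = Z^\rho_\tau = \theta^\rho(0, y_2)$ and $M \le C\rho N(1 + \|V\|_{L^\infty})$, Azuma's inequality yields
\[
    \P\left[\left|\theta^\rho(0,y_2) - \E[\theta^\rho(0,y_2)]\right| > \tfrac{\lambda}{3}\right] \le 2\exp\left(\frac{-c\lambda^2}{\rho^3 N(1 + \|V\|_{L^\infty})}\right).
\]

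For the coincidence step, set $R := C(\|V\|_{C^{0,1}}+1)^C(N + \rho)$ and $m := c(\|V\|_{C^{0,1}}+1)^{-C}\rho$, and work on the event $G := \{\mathcal E(R) \le m\}$. On $G$, Theorem~\ref{cont-estimate} bounds $\theta(0,y_2)$ and confines its realizing controlled paths to $Q_R$, and — with the constants chosen so that $\rho$ exceeds the local waiting time there — a bootstrapping argument shows the unit-ball jumps in the recursion for $\mathcal R^\rho$ are redundant, so that $\theta^\rho(0,y_2) = \theta(0,y_2)$; this is the coincidence noted after the definition of $\theta^\rho$. Theorem~\ref{cont-estimate} also gives $\P[G^c] \le CR^d\exp(-c(\|V\|_{C^{0,1}}+1)^{-C}m)$, and since $\theta \ge \theta^\rho$ with equality on $G$ while $\theta(0,y_2) \le C(1 + N) + C(\|V\|_{C^{0,1}}+1)^C\mathcal E(N)$ has finite second moment, Cauchy--Schwarz gives $|\E[\theta(0,y_2)] - \E[\theta^\rho(0,y_2)]| = \E[(\theta - \theta^\rho)\mathds{1}_{G^c}] \le C(\|V\|_{C^{0,1}}+1)^C N\,\P[G^c]^{1/2}$.

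It remains to balance by choosing $\rho := C(\|V\|_{C^{0,1}}+1)^C\lambda^{1/2}N^{-1/4}$ with $C$ large. This makes the Azuma exponent equal to $c(\|V\|_{C^{0,1}}+1)^{-C}\lambda^{1/2}N^{-1/4}$, which is exactly the target rate, while $\P[G^c]$ becomes a polynomial in $N$ and $\|V\|_{C^{0,1}}$ times $\exp(-c(\|V\|_{C^{0,1}}+1)^{-C}\lambda^{1/2}N^{-1/4})$ and the mean-difference term becomes a comparable quantity. The hypothesis $\lambda \ge C(\|V\|_{C^{0,1}}+1)^C N^{1/2}\log^2 N$ forces $(\|V\|_{C^{0,1}}+1)^{-C}\lambda^{1/2}N^{-1/4} \ge \log N$, so the polynomial prefactors are absorbed by a fraction of the exponent and the mean-difference term is at most $\lambda/3$; assembling $\P[|\theta(0,y_2) - \E[\theta(0,y_2)]| > \lambda] \le \P[G^c] + \P[|\theta^\rho(0,y_2) - \E[\theta^\rho(0,y_2)]| > \lambda/3]$ with the display then finishes the proof. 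I expect this last balancing to be the main obstacle: one choice of $\rho$, and one exponent in the hypothesis, must simultaneously make $\rho$ large enough for the coincidence $\theta^\rho = \theta$ and for the mean correction to be negligible, yet small enough that the Azuma variance proxy $\rho^3 N(1 + \|V\|_{L^\infty})$ still delivers the $\lambda^{1/2}N^{-1/4}$ rate, all while the powers of $\|V\|_{C^{0,1}}$ are tracked through Theorem~\ref{cont-estimate} — which is where the large unspecified constant $C$ in the statement originates.
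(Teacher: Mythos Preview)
Your proof is correct and follows essentially the same route as the paper: introduce the $\rho$-guaranteed passage time, bound the martingale increments via Lemmas~\ref{fluc-helper-1} and~\ref{fluc-helper-2}, apply Azuma, control the coincidence event $\{\theta = \theta^\rho\}$ through Theorem~\ref{cont-estimate}, and balance by taking $\rho$ proportional to $\lambda^{1/2}|y_1-y_2|^{-1/4}$. The minor deviations---stopping the martingale at $\tau$ (which cleanly gives $W_k = \max(Z^\rho_{k\delta\wedge\tau},\,k\delta\wedge\tau)$), using time step $\delta = (1+\|V\|_{L^\infty})^{-1}$ rather than $1$, and replacing the paper's conditional-expectation decomposition of $\E[\theta]-\E[\theta^\rho]$ by Cauchy--Schwarz---are inessential.
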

    \begin{proof}
        We assume, for convenience of notation, that $y_1 = 0$; let $y := y_2$.

        Note that $\theta(0, y) = \theta^\rho(0, y)$ as long as $\theta(u, v) \leq \rho$ if $|u-v| \leq 1$, for all $|u|, |v| \leq \lceil (1+\|V\|_{L^\infty}+\rho^{-1}) \rceil \lceil \rho^{-1}|y| \rceil$ (a ball of this radius contains the $\rho$-guaranteed reachable set at time $\lceil \rho^{-1}|y|\rceil$). By Lemma~\ref{fluc-helper-1} and Lemma~\ref{fluc-helper-2}, the martingale ${\{Z^\rho_t\}}_{t \geq 0}$ has bounded increments of \[ |Z_t - Z_s| \leq (\rho \|V\|_{L^\infty}+\rho+2)|t-s| + 8\rho. \] Also, $Z_t^\rho = \theta(0,y)$ for all $t \geq \rho|y|$. Apply the union bound with Theorem~\ref{cont-estimate} and Azuma's inequality to the sequence ${\{Z^\rho_n\}}_{0 \leq n \leq \lceil \rho^{-1}|y| \rceil}$ to see that
        \begin{align*}
            \P\left[\left|\theta(0, y) - \E[\theta^\rho(0, y)]\right| > \lambda\right] &\leq \P[\theta(0, y) \neq \theta^\rho(0, y)] + \P\left[\left|\theta(0, y) - \E[\theta^\rho(0, y)]\right| > \lambda\right]\\
            &\leq C{\left[(1+\|V\|_{L^\infty}+\rho^{-1})\rho^{-1}|y|\right]}^d\exp\left(-C^{-1} {(\|V\|_{C^{0,1}}+1)}^{-C}\rho\right)\\
            &\qquad + 2\exp\left(\frac{-C^{-1}\lambda^2}{{\left[(\rho L+\rho+2) + 8\rho\right]}^2\left\lceil \rho^{-1}|y|\right\rceil}\right),
        \end{align*}
        as long as $\rho \geq 2C$.
        Choosing $\rho = \lambda^{1/2}|y|^{-1/4}$ yields the bound
        \begin{equation}\label{eq:ok-but-replace}
            \P[|\theta(0, y) - \E[\theta^\rho(0, y)]| > \lambda] \leq C\exp\left(\frac{-C^{-1} {(\|V\|_{C^{0,1}}+1)}^{-C}\lambda^{1/2}}{|y|^{1/4}}\right),
        \end{equation}
        as long as
        \begin{equation}\label{eq:lambdalowerbound}
            \lambda \geq C{(\|V\|_{C^{0,1}}+1)}^{C}|y|^{1/2}\log^2|y|
        \end{equation}
        (we absorb all the polynomials into the exponential by changing the constant appropriately). It remains to replace $\E[\theta^\rho(0, y)]$ in~\eqref{eq:ok-but-replace} with $\E[\theta(0, y)]$. Indeed, we can bound
        \begin{align}
            \E[\theta(0, y)] &= \E\left[\theta^\rho(0, y) \mid E_\rho\right]\cdot\P[E_\rho] + \E\left[\theta(0, y) \mid E_\rho^c\right]\cdot(1-\P[E_\rho])\nonumber\\
            &= \E\left[\theta^\rho(0, y)\right] + O\left(\rho|y|(1-\P[E_\rho]) + {(\rho|y|)}^{d+1}\exp(-C^{-1}{(\|V\|_{C^{0,1}}+1)}^{-C}\rho)\right)\label{eq:almost-done-bound},
        \end{align}
        where $E_\rho$ denotes the event that $\theta(0, y) = \theta^\rho(0, y)$ and the last part of the last line comes from the controllability bound in Theorem~\ref{cont-estimate}. The lower bound~\eqref{eq:lambdalowerbound} on $\lambda$ (and hence on $\rho$) ensures that the error term in~\eqref{eq:almost-done-bound} is at most $\frac{1}{2}\lambda$ and can therefore be absorbed into the constant.
    \end{proof}

    \section{Nonrandom scaling bias}
    In this section, we use the bounds on random fluctuations of $\theta$ to bound the difference between $\E[\theta(0, y)]$ and $\lim_{\varepsilon \to 0^+} \varepsilon \E[\theta(0, \varepsilon^{-1}y)]$, which we refer to as the \textit{nonrandom scaling bias}. We follow a similar argument as in Alexander~\cite{Alexander90}, who proved an analogous result for Bernoulli percolation in two dimensions. Happily, the argument goes through in any dimension with the help of the Hobby\textendash{}Rice theorem~\cite{HobbyRice}, a version of which we quote below. We include their proof, because it is short and beautiful.

    \begin{thm}[Hobby\textendash{}Rice]\label{thm:hobbyrice}
        Let $\gamma \colon [0, 1] \to \R^d$ be continuous. Then there is a partition \[ 0 = t_0 < t_1 < \cdots < t_{d+1} = 1, \] along with signs \[ \delta_1, \dots, \delta_{d+1} \in \{-1, +1\}, \] such that \[ \sum_{k=1}^{d+1} \delta_k(\gamma(t_k)-\gamma(t_{k-1})) = 0. \]
    \end{thm}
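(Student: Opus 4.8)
The plan is to prove the Hobby--Rice theorem via a topological (Borsuk--Ulam) argument, which is the standard short proof and matches the phrase "short and beautiful." The idea is to encode a partition of $[0,1]$ into $d+1$ signed intervals by a point on the sphere $S^d$, build an odd (antipodal) continuous map $S^d \to \R^d$ out of the signed increments of $\gamma$, and invoke Borsuk--Ulam to find a zero.

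\begin{proof}
    For $a = (a_0, a_1, \dots, a_{d+1}) \in S^d \subseteq \R^{d+2}$ (so $\sum_{k=0}^{d+1} a_k^2 = 1$), set $s_k := a_0^2 + a_1^2 + \cdots + a_k^2$ for $0 \leq k \leq d+1$, so that
    \[
        0 = s_{-1} \leq s_0 \leq s_1 \leq \cdots \leq s_{d+1} = 1.
    \]
    These points cut $[0,1]$ into consecutive (possibly degenerate) intervals $I_k := [s_{k-1}, s_k]$ for $1 \leq k \leq d+1$, and $|I_k| = a_k^2$. Define $\Phi \colon S^d \to \R^d$ by
    \[
        \Phi(a) := \sum_{k=1}^{d+1} \sgn(a_k)\,\bigl(\gamma(s_k) - \gamma(s_{k-1})\bigr),
    \]
    where $\sgn(0) := 0$. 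Replacing $a$ by $-a$ leaves each $s_k$ unchanged but flips every $\sgn(a_k)$, so $\Phi(-a) = -\Phi(a)$; that is, $\Phi$ is odd. One checks that $\Phi$ is continuous: on any interval $I_k$ whose length shrinks to $0$, the increment $\gamma(s_k) - \gamma(s_{k-1})$ tends to $0$ by continuity (in fact uniform continuity) of $\gamma$, which absorbs the discontinuity of $\sgn(a_k)$ as $a_k$ passes through $0$. By the Borsuk--Ulam theorem, there is $a \in S^d$ with $\Phi(a) = 0$.

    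Finally, discard any indices $k$ with $a_k = 0$ and merge the corresponding degenerate intervals into their neighbors; relabeling the surviving cut points gives a genuine partition $0 = t_0 < t_1 < \cdots < t_{m} = 1$ with $m \leq d+1$ together with signs $\delta_j = \sgn(a_{k(j)}) \in \{-1,+1\}$ such that $\sum_{j=1}^{m} \delta_j(\gamma(t_j) - \gamma(t_{j-1})) = 0$. If $m < d+1$, subdivide any one interval into $d+1-m$ extra pieces and assign alternating signs $+,-,+,-,\dots$ to the new pieces so their signed increments telescope to the original signed increment; this yields the claimed partition into exactly $d+1$ pieces. (Equivalently, one reorganizes so that consecutive pieces with equal sign are merged and the count is padded as needed.)
\end{proof}
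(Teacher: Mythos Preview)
Your argument is the same Borsuk--Ulam proof as the paper's: parametrize signed partitions by squared coordinates on the sphere, build an odd continuous map into $\R^d$, and find a zero. Two slips to fix. First, the indexing: $S^d$ sits in $\R^{d+1}$, not $\R^{d+2}$, so drop the coordinate $a_0$ and set $s_0=0$; with your current definitions $s_0=a_0^2$ need not vanish, so the intervals $I_1,\dots,I_{d+1}$ cover only $[a_0^2,1]$ rather than $[0,1]$. Second, the padding step is wrong as stated: if you split an interval carrying sign $\delta$ into pieces with \emph{alternating} signs $+,-,+,\dots$, the signed increments do not telescope back to $\delta(\gamma(b)-\gamma(a))$ (already for two pieces you get $2\gamma(c)-\gamma(a)-\gamma(b)$). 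The correct fix is to give every new sub-piece the \emph{same} sign $\delta$; then the sum genuinely telescopes.
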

    \begin{proof}
        We parameterize signed partitions by points on the $d$-sphere as follows. Given a point $x \in S^d$, we define associated signs by $\delta^x_k = \sgn(x_k)$ and define ${\{t_k\}}_k$ to be the unique partition of $[0, 1]$ such that $t^x_{k}-t^x_{k-1} = x_k^2$. As such, we define the map $f \colon S^d \to \R^d$ by \[ {f(x)} := \sum_{k=1}^{d+1}\delta^x_k(\gamma(t^x_k)-\gamma(t^x_{k-1})). \] By the Borsuk\textendash{}Ulam theorem, there is some $x \in S^d$ such that $f(x) = f(-x)$. However, $f$ is odd, so $f(x) = 0$, which proves the claim.
    \end{proof}

    We now bound the nonrandom scaling bias. Given a function $f \colon \R^d \to \R$, we define the large-scale limit $\overline{f} \colon \R^d \to \R$ by \[ \overline{f}(x) := \lim_{\varepsilon \to 0^+} \varepsilon f\left(\varepsilon^{-1}x\right). \]
    \begin{prop}\label{prop:scalingbias}
        Assume that the law of $V$ is $\Z^d$-translation invariant and that \[ |\div V| \leq C^{-1}{(\|V\|_{C^{0,1}}+1)}^{-C}. \] Let $f(x) := \E[\theta(0, x)]$. Then \[ |f(x) - \overline{f}(x)| \leq C{(\|V\|_{C^{0,1}}+1)}^{C}|x|^{1/2}\log^2|x| \] for all $|x| \geq 1$.
    \end{prop}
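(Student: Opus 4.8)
The plan is to establish a subadditivity-type estimate for $f(x) = \E[\theta(0,x)]$ and combine it with the concentration inequality from Proposition~\ref{prop:random-fluc}, following Alexander's strategy for lattice first-passage percolation. Since $\theta$ satisfies the triangle inequality $\theta(0, x+y) \leq \theta(0, x) + \theta(x, x+y)$ and the law of $V$ is $\Z^d$-translation invariant, $f$ is approximately subadditive (up to the discretization error from rounding to $\Z^d$, which contributes only a bounded constant since $\Lip(V)$ is fixed). Standard subadditivity then gives the existence of the limit $\overline f(x) = \lim_{\varepsilon \to 0^+} \varepsilon f(\varepsilon^{-1}x)$, and moreover $\overline f(x) \leq f(x) + C$ trivially; the real content is the \emph{lower} bound $f(x) \leq \overline f(x) + C(\|V\|_{C^{0,1}}+1)^C |x|^{1/2}\log^2|x|$, i.e.\ that $f$ cannot be much larger than its asymptotic cone.

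For the lower bound on $\overline f$ (equivalently, the upper bound on $f - \overline f$), I would run Alexander's argument. Fix a large integer $N$ and a nearly-geodesic controlled path $\gamma$ from $0$ to $Nx$ (or rather from $0$ to $x$ realized at scale $1$, concatenated); apply the Hobby--Rice theorem (Theorem~\ref{thm:hobbyrice}) to $\gamma$ to split it into $d+1$ signed sub-arcs whose signed displacements sum to zero. Translating each sub-arc by an appropriate lattice vector so that they can be re-concatenated into $d+1$ paths each connecting $0$ to a point near $x$, and using translation invariance, one gets a relation of the form
\begin{equation*}
    f(x) \leq \frac{1}{N}\E[\theta(0, Nx)] + (\text{error from } d+1 \text{ concatenation points}),
\end{equation*}
where each concatenation point costs $O((\|V\|_{C^{0,1}}+1)^C)$ in expectation by the controllability estimate (Theorem~\ref{cont-estimate}), and the fluctuation of $\theta(0,Nx)$ around $N f(x)$ — wait, rather around $\overline f(Nx) = N\overline f(x)$ — is controlled by Proposition~\ref{prop:random-fluc}. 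Taking $N$ to be the optimal power of $|x|$ (roughly $N \asymp |x|^{-1/2}$ up to logs, so that the per-point error times the number of points balances against the fluctuation term $(\|V\|_{C^{0,1}}+1)^C |Nx|^{1/2}\log^2|Nx| / N$) yields the claimed bound $C(\|V\|_{C^{0,1}}+1)^C|x|^{1/2}\log^2|x|$.

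The main obstacle I anticipate is handling the interplay between the continuum setting and the need for approximate stationarity: unlike lattice FPP, here $\theta$ is defined via controlled ODEs, so one must be careful that cutting a path via Hobby--Rice and reassembling the pieces by integer translations genuinely produces admissible controlled paths — this is exactly where the controllability estimate (Theorem~\ref{cont-estimate}) is needed to "bridge" the $O(1)$ gaps created at the $d+1$ junctions, and where one pays the factor $(\|V\|_{C^{0,1}}+1)^C$ per junction. A secondary technical point is that Proposition~\ref{prop:random-fluc} gives concentration of $\theta(0,y)$ around $\E[\theta(0,y)]$, not around $\overline f(y)$, so one must iterate or bootstrap: use subadditivity to know $\E[\theta(0,Ny)] \geq N\overline f(y)$ always, and use the Hobby--Rice relabeling to get the matching near-equality up to the fluctuation scale, being careful that the error terms accumulated over the $d+1$ pieces and the single scale-$N$ fluctuation combine to exactly the stated order. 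I expect the bookkeeping of these error terms, rather than any conceptual difficulty, to be the bulk of the work.
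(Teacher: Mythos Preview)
Your high-level strategy---Alexander's approach via Hobby--Rice, with Proposition~\ref{prop:random-fluc} controlling fluctuations and Theorem~\ref{cont-estimate} bridging the cut points---is exactly what the paper does. But your mechanics for Hobby--Rice are wrong, and this matters. Applied to a controlled path $\gamma$ from $0$ to $y$, Hobby--Rice gives $d+1$ sub-arcs with signs $\delta_k\in\{\pm1\}$ whose \emph{signed} displacements sum to zero; equivalently, the $+1$ arcs together have total displacement $y/2$, and so do the $-1$ arcs. You therefore get \emph{two} reassembled paths, each from $0$ to $y/2$, not ``$d+1$ paths each connecting $0$ to a point near $x$.'' The individual arc displacements are completely uncontrolled, so no one-shot $N$-fold relation $f(x)\le \frac1N f(Nx)+\text{error}$ falls out; your choice $N\asymp|x|^{-1/2}$ is in any case dimensionally inconsistent (you need $N$ large, $|x|\ge 1$).

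What the paper actually proves is the \emph{dyadic} superadditivity
\[
2f\!\left(\tfrac{y}{2}\right)\le f(y)+C(\|V\|_{C^{0,1}}+1)^{C}|y|^{1/2}\log^2|y|,
\]
and then iterates: $f(y)\le 2^{-n}f(2^ny)+C(\|V\|_{C^{0,1}}+1)^{C}\sum_{k=0}^{n-1}2^{-k}|2^ky|^{1/2}\log^2|2^ky|$, the series converging since the error grows like $|y|^{1/2}$; sending $n\to\infty$ gives the bound. One further point you are missing: the dyadic inequality is not obtained by taking expectations directly. The paper first fixes, via Proposition~\ref{prop:random-fluc} and a union bound, an event of \emph{positive probability} on which $|\theta(v,w)-\E[\theta(v,w)]|\le C(\|V\|_{C^{0,1}}+1)^{C}|y|^{1/2}\log^2|y|$ holds simultaneously for all $v,w$ in a ball of radius $\sim|y|$. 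On that event one can freely translate arcs (compare $\theta$-values with equal displacement at different basepoints) at the stated cost; since the resulting inequality relates only deterministic quantities $f=\E[\theta]$, it then holds unconditionally. This ``work on a single good realization'' step is what replaces the lattice stationarity you are implicitly assuming.
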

    \begin{proof}
        First, note that translation invariance and the controllability bound in Theorem~\ref{cont-estimate} implies that $f$ is subadditive up to a constant, that is, \[ f(x+y) \leq f(x) + f(y) + C{(\|V\|_{C^{0,1}}+1)}^{C} \] for all $x, y \in \R^d$, so it follows immediately that \[ f(y) \geq \overline{f}(y) +  C{(\|V\|_{C^{0,1}}+1)}^{C}. \] Our goal is to show that $f$ is superadditive up to some small error, after which we apply an argument similar to that in Fekete's lemma to bound the difference between $f$ and its large-scale limit.

        Fix any $y \in \R^d$. By Proposition~\ref{prop:random-fluc}, Theorem~\ref{cont-estimate}, and the union bound, the event that
        \begin{equation}\label{eq:closetoexpectation}
            |\theta(v, w) - \E[\theta(v, w)]| \leq C{(\|V\|_{C^{0,1}}+1)}^{C}|y|^{1/2}\log^2|y|
        \end{equation}for all
        \[ |v|, |w| \leq C{(\|V\|_{C^{0,1}}+1)}^{C}|y| \] has positive probability. By translation invariance, this implies that
        \begin{equation}\label{eq:translateinterval}
            |\theta(w, x) - \theta(y, z)| \leq C{(\|V\|_{C^{0,1}}+1)}^{C}|y|^{1/2}\log^2|y|
        \end{equation}
        whenever $|(x-w)-(z-y)| \leq C$ and \[ |w|, |x|, |y|, |z| \leq C{(\|V\|_{C^{0,1}}+1)}^{C}|y|. \]

        In an instance of this event, let $\gamma \colon [0, \theta(0, y)] \to \R^d$ be a controlled path from $0$ to $y$. Applying Theorem~\ref{thm:hobbyrice} to $\gamma$, we conclude that there are points \[ 0 \leq s_1 < t_1 \leq s_2 < t_2 \leq \cdots \leq s_\ell < t_\ell \leq 1, \] where $\ell \leq \frac{d+1}{2}$, such that \[ \sum_{k=1}^\ell \gamma(t_k) - \gamma(s_k) = \frac12 y. \] Applying~\eqref{eq:closetoexpectation} and~\eqref{eq:translateinterval}, we conclude that
        \begin{align*}
            2f\left(\frac12 y\right) &\leq C{(\|V\|_{C^{0,1}}+1)}^{C}|y|^{1/2}\log^2|y| + \theta\left(0, \frac12 y\right) + \theta\left(\frac12 y, y\right)\\
            &\leq C{(\|V\|_{C^{0,1}}+1)}^{C}|y|^{1/2}\log^2|y| + \left(\sum_{k=1}^\ell \theta(\gamma(s_k), \gamma(t_k))\right)\\
            &\qquad\qquad + \left(\theta(0, \gamma(s_1)) + \theta(\gamma(t_1), y) + \sum_{k=2}^\ell \theta(\gamma(t_{k-1}), \gamma(s_k))\right)\\
            &\leq C{(\|V\|_{C^{0,1}}+1)}^{C}|y|^{1/2}\log^2|y| + \theta(0, y)\\
            &\leq C{(\|V\|_{C^{0,1}}+1)}^{C}|y|^{1/2}\log^2|y| + f(y).
        \end{align*}
        It follows by induction that \[ 2^n f(y) \leq f\left(2^n y\right) + \sum_{k=0}^{n-1} 2^{n-1-k}C{(\|V\|_{C^{0,1}}+1)}^{C}|2^k y|^{1/2}\log^2|2^k y|. \]
        Dividing by $2^n$ on both sides and taking the limit as $n \to \infty$ yields \[ f(y) \leq \overline{f}(y) + C{(\|V\|_{C^{0,1}}+1)}^{C}|y|^{1/2}\log^2|y|. \]
    \end{proof}

    \section{Homogenization}
    In this section, we prove our main homogenization results for the shape of the reachable set and for solutions of the G equation, using our bounds on convergence of first-passage time to the large-scale average.

    \subsection{The reachable set}
    We combine the random fluctuation bound and nonrandom bias bound to deduce a rate of convergence of the rescaled reachable sets.
    \begin{prop}\label{prop:reachable-set-homog}
        Assume that the law of $V$ is $\Z^d$-translation invariant and that \[ |\div V| \leq C^{-1}{(\|V\|_{C^{0,1}}+1)}^{-C}. \] Then there is a closed set $\mathcal{S} \subseteq \R^d$ such that, for all $t \geq 0$, \[ \P\left[\dist_H(\mathcal{R}_t(0), t\mathcal{S}) > C{(\|V\|_{C^{0,1}}+1)}^{C}t^{1/2}\log^2 t + \lambda\right] \leq C\exp\left(\frac{-C^{-1}{(\|V\|_{C^{0,1}}+1)}^{-C}\lambda^{1/2}}{t^{1/4}}\right), \] where $\dist_H$ denotes the Hausdorff distance. Furthermore, there is a random variable $T_0$, with \[ \E[\exp(C^{-1}{(\|V\|_{C^{0,1}}+1)}^{C}\log^{3/2} T_0)] < \infty, \] such that \[ \sup_{(t,x) \in [0, T] \times B_T} \frac{\dist_H(\mathcal{R}_t(x), x + t\mathcal{S})}{T^{1/2}\log^2 T} \leq C{(\|V\|_{C^{0,1}}+1)}^{C} \] for all $T \geq T_0$.
    \end{prop}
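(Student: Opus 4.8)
The plan is to take $\mathcal{S} := \{v \in \R^d : \overline{f}(v) \le 1\}$, where $f(v) := \E[\theta(0,v)]$ and $\overline{f}$ is its large-scale limit from Proposition~\ref{prop:scalingbias}. As in that proposition, $\Z^d$-translation invariance together with the controllability bound of Theorem~\ref{cont-estimate} makes $f$ subadditive up to an additive constant, so $\overline{f}$ is positively $1$-homogeneous and subadditive; combining the speed limit $\theta(0,v) \ge |v|/(1+\|V\|_{L^\infty})$ with the linear upper bound $f(v) \le C(\|V\|_{C^{0,1}}+1)^C(1+|v|)$ coming from Theorem~\ref{cont-estimate} (whose extra-waiting term has finite expectation) gives
\[
    \frac{|v|}{1+\|V\|_{L^\infty}} \; \le \; \overline{f}(v) \; \le \; C(\|V\|_{C^{0,1}}+1)^C|v|.
\]
Thus $\overline{f}$ is a continuous gauge, $\mathcal{S}$ is a closed convex neighbourhood of the origin with $\mathcal{S} \subseteq B_{1+\|V\|_{L^\infty}}$, and $t\mathcal{S} = \{v : \overline{f}(v) \le t\}$.

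\textbf{The single-time estimate.} Fix $t \ge 0$; both $\mathcal{R}_t(0)$ and $t\mathcal{S}$ lie in $B_{(1+\|V\|_{L^\infty})t}$. The honest object is $\mathcal{R}_t^-(0) = \{y : \theta(0,y) \le t\}$, and the Hausdorff gap between $\mathcal{R}_t(0)$ and $\mathcal{R}_t^-(0)$ is $O((\|V\|_{C^{0,1}}+1)^C\log t)$ with overwhelming probability---a routine point using Theorem~\ref{cont-estimate}---so it suffices to estimate $\dist_H(\mathcal{R}_t^-(0), t\mathcal{S})$. Over a unit-spaced net of $B_{(1+\|V\|_{L^\infty})t}$, of cardinality $\le C((1+\|V\|_{L^\infty})t)^d$, I apply Proposition~\ref{prop:random-fluc} to bound $|\theta(0,y) - f(y)| \le \lambda$ off an event of probability $\le C\exp(-C^{-1}(\|V\|_{C^{0,1}}+1)^{-C}\lambda^{1/2}|y|^{-1/4})$, combine it with the deterministic bound $|f(y) - \overline{f}(y)| \le C(\|V\|_{C^{0,1}}+1)^C|y|^{1/2}\log^2|y|$ of Proposition~\ref{prop:scalingbias}, and interpolate to off-net points via Theorem~\ref{cont-estimate} (the extra-waiting term costs only $O((\|V\|_{C^{0,1}}+1)^C\log t)$ with overwhelming probability). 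A union bound then yields, with the probability asserted in the first display of the proposition, that $|\theta(0,y) - \overline{f}(y)| \le \mathrm{err} := C(\|V\|_{C^{0,1}}+1)^C t^{1/2}\log^2 t + \lambda$ for all relevant $y$. To convert: if $y \in \mathcal{R}_t^-(0)$ then $\overline{f}(y) \le t + \mathrm{err}$, so by $1$-homogeneity $\tfrac{t}{t+\mathrm{err}}\,y \in t\mathcal{S}$, and since $|y| \le (1+\|V\|_{L^\infty})\overline{f}(y)$ this gives $\dist(y, t\mathcal{S}) \le 2(1+\|V\|_{L^\infty})\mathrm{err}$; symmetrically, if $\overline{f}(y) \le t$ then $\theta(0,y) \le t + \mathrm{err}$, and truncating a near-optimal controlled path at time $t$ puts a point of $\mathcal{R}_t(0)$ within $(1+\|V\|_{L^\infty})\mathrm{err}$ of $y$. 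Absorbing $(1+\|V\|_{L^\infty})$ into $C(\|V\|_{C^{0,1}}+1)^C$ gives the first assertion.

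\textbf{The uniform estimate.} Work at dyadic scales: for $m \ge 1$ let $\mathcal{B}_m$ be the event that $\dist_H(\mathcal{R}_t(x), x+t\mathcal{S}) > C(\|V\|_{C^{0,1}}+1)^C 2^{m/2}m^2$ for some $(t,x) \in [0,2^m] \times B_{2^m}$. Using $\Z^d$-translation invariance of the law of $V$ (rounding the centre $x$ to the lattice and absorbing the $O(1)$ error through Theorem~\ref{cont-estimate}), the single-time estimate, and a union bound over a net of $[0,2^m]\times B_{2^m}$ whose spacing is comparable to the target error---so that Theorem~\ref{cont-estimate} controls the reachable set between net points---one bounds $\P[\mathcal{B}_m]$. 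Setting $T_0 := \sup\{2^m : \mathcal{B}_m \text{ occurs}\}$ (and $T_0 := 1$ if none occurs), Borel--Cantelli makes $T_0$ finite almost surely, and for $T \in [2^m, 2^{m+1})$ with $2^m \ge T_0$ the bound at scale $2^{m+1}$ gives the claimed supremum bound after enlarging $C$; the moment bound then follows from $\P[T_0 \ge 2^m] \le \sum_{k \ge m}\P[\mathcal{B}_k]$.

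\textbf{The main obstacle.} Everything above is bookkeeping around the three prior estimates (controllability, Theorem~\ref{cont-estimate}; fluctuations, Proposition~\ref{prop:random-fluc}; scaling bias, Proposition~\ref{prop:scalingbias}); the content is the bound on $\P[\mathcal{B}_m]$. Even at the coarsest admissible spacing, the net of $[0,2^m]\times B_{2^m}$ has exponentially-in-$m$ many points, so the per-point fluctuation bound of Proposition~\ref{prop:random-fluc}, of the form $\exp(-C^{-1}(\|V\|_{C^{0,1}}+1)^{-C}\lambda^{1/2}2^{-m/4})$, must beat an $e^{O(m)}$ union factor while the extra-waiting parameter $\lambda$ stays below the $O((\|V\|_{C^{0,1}}+1)^C 2^{m/2}\log^2 2^m)$ main term. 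Forcing $\P[\mathcal{B}_m]$ to decay fast enough that $T_0$ has a genuinely subpolynomial---rather than merely polynomial---tail, which is exactly what makes $\E[\exp(C^{-1}(\|V\|_{C^{0,1}}+1)^C\log^{3/2}T_0)]$ finite, requires propagating the estimate through a hierarchy of scales and bootstrapping, and this is where the exponent $\tfrac32$ emerges. This is precisely the scheme of \cite{me}; the only genuinely new feature here is that all constants must be tracked through the nonzero-divergence ingredients, Proposition~\ref{prop:mainsmalldiv} and Proposition~\ref{lem:reachablesetgrows}.
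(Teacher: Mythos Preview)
Your construction of $\mathcal{S}$, the single-time Hausdorff estimate via a lattice net combined with Proposition~\ref{prop:random-fluc} and Proposition~\ref{prop:scalingbias}, and the dyadic Borel--Cantelli packaging of the uniform estimate all match the paper's argument step for step. The only real divergence is your final paragraph, where you claim that a direct union bound cannot deliver the subpolynomial $\exp(c\log^{3/2}T_0)$ moment and that one must import a multiscale bootstrap from~\cite{me}. That is not what the paper does, and it is not needed.

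The paper's proof of the second claim is a one-shot union bound: apply the first claim at every $(t,x)\in(\Z\cap[0,T])\times(\Z^d\cap B_T)$, pay the polynomial factor $CT^{d+1}$, extend to non-lattice $(t,x)$ via Theorem~\ref{cont-estimate}, and then choose $\lambda$ proportional to $T^{1/2}$ times a power of $\log T$. The exponent $\tfrac32$ is pure arithmetic: with $\lambda\asymp T^{1/2}\log^{a}T$ one has $\lambda^{1/2}T^{-1/4}\asymp\log^{a/2}T$, so the tail bound becomes
\[
CT^{d+1}\exp\bigl(-C^{-1}(\|V\|_{C^{0,1}}+1)^{-C}\log^{a/2}T\bigr),
\]
and as soon as $a/2>1$ the exponent dominates $(d+1)\log T$ for all large $T$, \emph{regardless of how small the constant $C^{-1}(\|V\|_{C^{0,1}}+1)^{-C}$ is}. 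Your worry that the per-point exponent is only $O(m)$ against an $e^{O(m)}$ net is therefore resolved not by iterating across scales but simply by this freedom in the $\log$-power of $\lambda$; no hierarchy or bootstrap enters. (The paper writes $\lambda=CT^{1/2}\log^2 T$, i.e.\ $a=2$, which is the borderline case; taking $a=3$ is the clean route to the $\log^{3/2}$ tail and is why~\cite{me} carries $\log^3$ in its rate---but either way the mechanism is a single union bound, not a multiscale iteration.)
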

    \begin{proof}
        For the first claim, let $t \geq 0$. Apply Theorem~\ref{cont-estimate} to $B_{(1+\|V\|_{L^\infty})t + 1}$ and Proposition~\ref{prop:random-fluc} to every $x \in \Z^d \cap \overline{B_{(1+\|V\|_{L^\infty})t}}$ and use the union bound to see that as long as $\lambda \geq C{(\|V\|_{C^{0,1}}+1)}^{C}t^{1/2}\log^2 t$ we have
        \begin{equation}\label{random-fluc-applied}
            \P\left[\forall x \in \overline{B_{(1+\|V\|_{L^\infty})t}} :\: \left|\theta(0, x) - \E[\theta(0, x)]\right| > \lambda \right] \leq C\exp\left(\frac{-C^{-1}{(\|V\|_{C^{0,1}}+1)}^{-C}\lambda^{1/2}}{t^{1/4}}\right),
        \end{equation}
        where we absorbed polynomials into the exponential by enlarging the constant $C$. Note also that Theorem~\ref{cont-estimate} implies that if $0 \leq r \leq s$, then
        \begin{equation}\label{cont-estimate-applied}
            \P\left[\mathcal{R}_r(0) \nsubseteq \mathcal{R}_s(0) + B_{\lambda}\right] \leq C\exp\left(\frac{-C^{-1}{(\|V\|_{C^{0,1}}+1)}^{-C}\lambda^{1/2}}{t^{1/4}}\right).
        \end{equation}
        This bounds the random error. On the other hand, Proposition~\ref{prop:scalingbias} shows that
        \begin{equation}\label{nonrandom-bias-applied}
            0 \leq \E[\theta(0, x)] - \overline{\theta}(x) \leq C{(\|V\|_{C^{0,1}}+1)}^{C}|x|^{1/2}\log^2|x|,
        \end{equation}
        where \[ \overline{\theta}(x) := \lim_{\varepsilon \to 0^+} \varepsilon \E[\theta(0, \varepsilon^{-1}x)]. \]
        We define $\mathcal{S} := \{x \in \R^d \mid \overline{\theta}(x) \leq 1\}$. The estimates~(\ref{random-fluc-applied}) and~(\ref{nonrandom-bias-applied}) combine to say that, with high probability, the first passage time $\theta(0, x)$ from $0$ to any point $x$ is close to the large-scale average $\overline{\theta}(x)$. Furthermore, the estimate~(\ref{cont-estimate-applied}) says that once a controlled path reaches $x$, the reachable set stays close to $x$ for all later times (the controllability estimate guarantees the existence of controlled paths in the form of short loops). Unwrapping the definition of Hausdorff distance, along with the fact that $\overline{\theta}$ is positively homogeneous of degree one, i.e. $\theta(tx) = t\theta(x)$ for $t \geq 0$, yields the first claim.

        For the second claim, apply the first claim to every $(t, x) \in (\Z \cap [0, T]) \times (\Z^d \cap B_T)$ and the union bound to conclude that
        \begin{align*}
            \P\left[\sup_{t \in \Z \cap [0, T]} \sup_{x \in \Z^d \cap B_T} \dist_H(\mathcal{R}_t(x), x + t\mathcal{S}) > C{(\|V\|_{C^{0,1}}+1)}^{C}T^{1/2}\log^2 T + \lambda\right]\\
            \leq CT^{d+1}\exp\left(\frac{-C^{-1}{(\|V\|_{C^{0,1}}+1)}^{-C}\lambda^{1/2}}{T^{1/4}}\right).
        \end{align*}
        Next, apply the controllability estimate in $B_T$ to see that the same holds for all $(t, x) \in [0, T] \times B_T$, by enlarging the constant $C$. Plugging in $\lambda = CT^{1/2}\log^2 T$ shows that
        \begin{align*}
            \P\left[\sup_{(t, x) \in [0, T] \times B_T} \frac{\dist_H(\mathcal{R}_t(x), x+t\mathcal{S})}{T^{1/2}\log^2 T} > C{(\|V\|_{C^{0,1}}+1)}^{C}\right]\\
            \leq C\exp(-C^{-1}{(\|V\|_{C^{0,1}}+1)}^{-C}\log^{3/2} T),
        \end{align*}
        and the conclusion follows.
    \end{proof}

    \subsection{Solutions of the G equation}
    We now turn to the proof of Theorem~\ref{thm:main-homog}.
    \begin{proof}
        Let $u^\varepsilon$ be a solution to the G equation~\eqref{eq:eps-G} with initial data $u_0$, and let $\overline{u}$ be the solution the the effective equation~\eqref{eq:macro-G} with the same initial data. The effective Hamiltonian is given by
        \begin{equation}\label{eq:macro-H}
            H(p) := \sup_{v \in \mathcal{S}} p \cdot v
        \end{equation}
        The optimal control formulations are
        \begin{equation}\label{eq:eps-oc}
            u^\varepsilon(t, x) = \sup_{\varepsilon \mathcal{R}_{\varepsilon^{-1}t}(\varepsilon^{-1}x)} u_0
        \end{equation}
        and
        \begin{equation}\label{eq:macro-oc}
            u^\varepsilon(t, x) = \sup_{x+t\mathcal{S}} u_0
        \end{equation}
        respectively.

        Using the representation formulas~(\ref{eq:macro-oc}) and~(\ref{eq:eps-oc}), we see that for every $0 \leq t \leq T$ and $x \in B_T$ we have \[ |u^\varepsilon(t, x) - \overline{u}(t, x)| = \left|\sup_{\varepsilon \mathcal{R}_{\varepsilon^{-1}t}(\varepsilon^{-1}x)} u_0 - \sup_{x + t\mathcal{S}} u_0\right| \leq \Lip(u_0)\dist_H(\varepsilon\mathcal{R}_{\varepsilon^{-1}t}(\varepsilon^{-1}x), x + t\mathcal{S}). \] Rescaling by $\varepsilon^{-1}$ and applying Proposition~\ref{prop:reachable-set-homog} yields \[ \sup_{(t, x) \in [0, T] \times B_T} \dist_H(\varepsilon\mathcal{R}_{\varepsilon^{-1}t}(\varepsilon^{-1}x), x + t\mathcal{S}) \leq C{(\|V\|_{C^{0,1}}+1)}^{C}{(T\varepsilon)}^{1/2}\log^2 (\varepsilon^{-1}T) \] for all $T \geq \varepsilon T_0$, and the result follows.
    \end{proof}

    \bibliographystyle{plain}
    \bibliography{main}
\end{document}